\newcommand{\email}[1]{{\tt #1}}
\newcommand{\R}{\mathbb{R}}
\newcommand{\oR}{\overline{\R}}
\newcommand{\norm}[1]{\|#1\|}
\newcommand{\mv}{\,\mid\,}
\newcommand{\bmv}{\,\big\vert\,}
\newcommand{\B}{{\cal B}}
\newcommand{\oB}{\bar{\cal B}}
\newcommand{\I}{{\cal I}}
\newcommand{\M}{{\cal M}}
\newcommand{\T}{{\cal T}}
\newcommand{\Sp}{{\mathcal S}}
\newcommand{\Z}{{\cal Z}}
\newcommand{\setto}[1]{\mathop{\rightarrow}\limits^#1}
\newcommand{\attto}[1]{\mathop{\rightarrow}\limits_#1}
\newcommand{\longsetto}[1]{\mathop{\longrightarrow}\limits^{#1}}
\newcommand{\attconv}[1]{\mathop{\longrightarrow}\limits_{#1}^{\gph \partial #1}}
\newcommand{\skalp}[1]{\langle #1\rangle}
\newcommand{\xb}{\bar x}
\newcommand{\yb}{\bar y}
\newcommand{\AT}[2]{{\textstyle{#1\atop#2}}}
\newcommand{\xba}{{\bar x^\ast}}
\newcommand{\OO}{{\cal O}}
\newcommand{\argmin}{\mathop{\rm arg\,min}}
\newcommand{\argmax}{\mathop{\rm arg\,max}\limits}
\newcommand{\inn}{{\rm int\,}}
\newcommand{\co}{{\rm conv\,}}
\newcommand{\gph}{\mathrm{gph}\,}
\newcommand{\dom}{\mathrm{dom}\,}
\newcommand{\epi}{\mathrm{epi}\,}
\newcommand{\tto}{\rightrightarrows}
\newcommand{\Limsup}{\mathop{{\rm Lim}\,{\rm sup}}}
\newcommand{\rge}{{\rm rge\;}}
\newcommand{\Ta}{\mathcal{T}^f_\epsilon}
\newcommand{\varco}{{\rm var\; co\,}}
\newcommand{\tilt}{{\rm tilt\,}}
\newlength{\myAlgBox}
\newtheorem{theorem}{Theorem}[section]
\newtheorem{proposition}[theorem]{Proposition}
\newtheorem{remark}[theorem]{Remark}
\newtheorem{lemma}[theorem]{Lemma}
\newtheorem{corollary}[theorem]{Corollary}
\newtheorem{definition}[theorem]{Definition}
\title{On second-order variational analysis of variational convexity of prox-regular functions}
\author{Helmut Gfrerer\thanks{Institute of Computational Mathematics, Johannes Kepler University
Linz  and Johann Radon Institute for Computational and Applied Mathematics (RICAM), Austrian Academy of Sciences, A-4040 Linz, Austria; \email{helmut.gfrerer@ricam.oeaw.ac.at}; ORCID 0000-0001-6860-102X}
}
\date{}
\begin{document}
\maketitle

{\footnotesize
\noindent{\bf Abstract.} Variational convexity, together with ist strong counterpart, of extended-real-valued functions has been recently introduced by Rockafellar. In this paper we present second-order characterizations of these properties, i.e., conditions using first-order generalized derivatives of the subgradient mapping. Up to now, such characterizations are only known under the assumptions of prox-regularity and subdifferential continuity and in this paper we discard the latter. To this aim we slightly modify the definitions of the generalized derivatives to be compatible with the $f$-attentive convergence appearing in the definition of subgradients. We formulate our results in terms of both coderivatives and subspace containing derivatives. We also give formulas for the exact bound of variational convexity  and study relations between variational strong convexity, tilt-stable local minimizers and strong metric regularity of some truncation of the subgradient mapping.
\\
{\bf Key words.} Variational convexity, variational strong convexity, tilt-stable local minimizers, coderivative, subspace containing derivative
\\
{\bf AMS Subject classification.} 49J53, 49J52, 90C31}

\section{Introduction}

It is well-known that the subgradient mapping $\partial f$  of a lower semicontinuous (lsc) function $f:\R^n\to\oR:=(-\infty,\infty]$ is maximally monotone if and only if the function is convex. If one replaces maximal monotonicity of $\partial f$ by local monotonicity around some reference pair then one arrives at the notion of {\em variational convexity} of $f$, which, loosely speaking, states that some localization of the subgradient mapping $\partial f$ is indistinguishable from that of a convex function. This is a more subtle property than local convexity of $f$. Variational strong convexity can be defined in this scheme with replacing convexity by strong convexity and monotonicity of the subgradient mapping by strong monotonicity.  Variational strong convexity (without naming it) goes back to the characterization of {\em tilt-stable} local minimizers by Poliquin and Rockafellar \cite{PolRo98}, but the definitions of variational (strong) convexity were only introduced more than twenty years later by Rockafellar \cite{Ro19}.

Variational (strong) convexity is an important tool in the analysis of nonconvex optimization problems. E.g., in the presence of variational convexity, the first-order optimality condition $0\in\partial f(\xb)$ is also sufficient for $\xb$ being  a local minimizer, whereas variational strong convexity also ensures that $\xb$ is a tilt-stable local minimizer \cite{Ro19}. The reader can find other applications  to the proximal point and Augmented Lagrangian methods in the recent papers \cite{Ro19, Ro23a, Ro23b}.

Another important concept in variational analysis is {\em prox-regularity} as introduced by Poliquin and Rockafellar \cite{PolRo96}. Prox-regularity has  numerous applications  in the stability analysis of solutions of nonconvex optimization problems and, more generally, in second-order calculus. Its close relation with variational convexity was pointed out by Rockafellar \cite{Ro24}.

As already mentioned, variational (strong) convexity can be characterized by the local (strong) monotonicity of the subgradient mapping. There are other characterizations available in the literature, e.g., by the uniform quadratic growth condition \cite{Ro19,Ro24} or by local (strong) convexity of the Moreau envelope \cite{KhMoPh23a}. The aim of this paper is to provide second-order characterizations of (strong) variational convexity of prox-regular functions, where second-order analysis is understood as first-order analysis of the subgradient mapping. Another type of second-order analysis by using second-order expansions of the function, is out of the scope of this paper.
To the best of our knowledge, such second-order characterizations in terms of regular/limiting coderivatives of $\partial f$ are only known under the additional assumption that $f$ is subdifferentially continuous \cite{KhMoPh23a}. We also refer the reader to the very recent book of Mordukhovich \cite{Mo24}, where results for the infinite-dimensional case can be found as well.

The reason why the classical second-order constructions do not work in the absence of subdifferential continuity, can be  simply explained by the fact that for variational convexity we have to work with $f$-attentive localizations of the subgradient mapping whereas the usual limiting coderivative of $\partial f$ does not take $f$-attentive convergence into account. Two possible remedies appear to be manifest: We can either work with generalized derivatives of $f$-attentive localizations  of $\partial f$ or we modify the definition of the generalized derivative so that $f$-attentive convergence is incorporated. Although we will show that both approaches are equivalent, the former  has the disadvantage that usually we do not know in advance the $f$-attentive neighborhood which we have to work with,  whereas the latter results in purely point-based objects. We refer the reader to the very recent paper \cite{KhKhMoPh23c} where a neighborhood-based characterization of variational convexity was given in terms of the regular/limiting coderivative of some $f$-attentive localization of $\partial f$. 

Another goal of this paper is to point out that for characterizing variational (strong) convexity we are not restricted to the use of $f$-attentive limiting coderivatives but we can equivalently use so-called  {\em subspace containing derivatives} (SC derivatives) as introduced in \cite{GfrOut22}. The SC derivative of a set-valued mapping is a generalized  derivative whose elements are subspaces and can be considered as a generalization of the B-differential for single-valued mappings to set-valued ones. Indeed, the elements of the B-differential are linear mappings, i.e., mappings whose graph is a linear subspace, but the subspaces contained in an SC derivative are not necessarily graphs of linear mappings. The subspaces belonging to the so-called adjoint SC derivative are always contained in the graph of the limiting coderivative and therefore the adjoint SC derivative can be considered as a kind of skeleton for the limiting coderivative. Moreover, the computation of the SC derivative seems to be simpler than the one of the limiting coderivative. SC derivatives have been successfully applied in various fields, like the development of semismooth$^*$ Newton methods \cite{GfrOut22, GfrMaOutVal23, GfrOutVal23, Gfr24}, the characterization of tilt-stable local minimizers of prox-regular and subdifferentially continuous functions \cite{GfrOut22}, the isolated calmness property on a neighborhood \cite{GfrOut22,GfrOut23} and computing semismooth derivatives of implicitly defined mappings \cite{GfrOut24a}.

The paper is organized in the following way. Section 2 contains all the known notions and results of variational analysis and generalized differentiation used in this paper. In Section 3 we introduce the new notion of $f$-attentive generalized derivatives of the subdifferential mapping $\partial f$, provide some elementary calculus and present their relation with generalized derivatives of $f$-attentive localizations of $\partial f$ under  prox-regularity. Section 4 provides various neighborhood-based second-order characterizations of variational convexity, whereas in Section 5 we present purely point-based characterizations of variational strong convexity and tilt-stability  and we establish equivalence between these properties and strong meric regularity of a truncation of the subgradient mapping. Moreover, formulas for the exact bound of the level of variational convexity and tilt-stability are given. The sole assumption we use is prox-regularity, subdifferential continuity is not supposed.

Given $x\in\R^n$ and $\epsilon>0$, we denote by $\B_\epsilon(x)\ (\oB_\epsilon(x))$ the open (closed) ball around $x$ with radius $\epsilon$.  For a function $f:\R^n\to\oR$ we define the level sets $L_f(\rho):= \{x\mv f(x)<\rho\}$, $\rho\in\R$. The graph of a set-valued mapping $F:\R^n\tto\R^m$ is denoted by $\gph F:=\{(x,y)\mv y\in F(x)\}$. Finally, for a nonsingular $n\times n$ matrix $A$ we define $A^{-T}:=(A^T)^{-1}$.

\section{Preliminaries from variational analysis}

Given an extended real-valued function $f:\R^n\to\oR$ and  a point $\xb\in \dom  f:=\{x\in\R^n\mv  f(x)<\infty\}$, the {\em regular subdifferential} of $f$ at $\xb$ is given by
\[\widehat\partial  f(\xb):=\Big\{x^*\in\R^n\mv\liminf_{x\to\xb}\frac{ f(x)- f(\xb)-\skalp{x^*,x-\xb}}{\norm{x-\xb}}\geq 0\Big\},\]
while the {\em (limiting) subdifferential} is defined by
\[\partial  f(\xb):=\{x^*\mv \exists x_k\attto{ f} \xb, x_k^* \to x^* \mbox{ with }x_k^*\in\widehat \partial  f(x_k)\ \forall k\},\]
where $x_k\attto{ f} \xb$ denotes {\em $f$-attentive} convergence, i.e.
\[x_k\attto{ f} \xb \ :\Longleftrightarrow\ x_k\to \xb \mbox{ and }  f(x_k)\to  f(\xb).\]

\begin{definition}\label{DefProxReg} An lsc function $f:\R^n\to\oR$ is called {\em prox-regular} at $\xb$ for $\xba$
 if $f$ is finite  at $\xb$ with $\xba\in\partial f(\xb)$ and  there
exist $\epsilon> 0$ and $r\geq 0$ such that
\[ f(x')\geq  f(x)+\skalp{x^*,x'-x}-\frac r2 \norm{x'-x}^2\]
 whenever $x'\in \B_\epsilon(\xb)$ and $(x,x^*)\in \gph \partial_\rho  f\cap (\B_\epsilon(\xb)\times \B_\epsilon(\xba))$ with $\rho:=f(\xb)+\epsilon$, where the mapping $\partial_\rho f:\R^n\tto\R^n$ given by
\[\gph \partial_\rho f:=\{(x,x^*)\in \gph\partial f\mv f(x)<\rho\}=\gph \partial f\cap (L_f(\rho)\times\R^n)\]
specifies an {\em $f$-truncation} of $\partial f$.
\end{definition}
Obviously, if the requirements of Definition \ref{DefProxReg} are fulfilled for some $r\geq 0$ and $\epsilon>0$, they are also fulfilled for any $r'\geq r$ and every $0<\epsilon'\leq\epsilon$. Further, it is easy to see that  $f$ is prox-regular at $\xb$ for $\xba\in\partial f(\xb)$ with parameter $r\geq 0$ if and only if there exists an open convex neighborhood $U\times V$ of $(\xb,\xba)$ along with some $\rho>f(\xb)$ such that
\[ f(x')\geq  f(x)+\skalp{x^*,x'-x}-\frac r2 \norm{x'-x}^2\quad\mbox{ whenever }x'\in U,\ (x,x^*)\in\gph\partial_\rho f\cap(U\times V).\]

The requirement $f(x)< \rho$ ensures that the considered subgradients $x^*$ correspond to normals to $\epi  f$ at points $(x, f(x))$ close to $(\xb, f(\xb))$. This condition can be omitted if $f$ is {\em sub\-differen\-tially continuous} at $\xb$ for $\xba$ in the sense that for any sequence $(x_k,x_k^*)\longsetto{{\gph \partial  f}}(\xb,\xba)$ we have $\lim_{k\to\infty} f(x_k)= f(\xb)$. Subdifferential continuity is widely used in second-order variational analysis but one of the main goals of the paper is to avoid it. Recall that an lsc convex function $f$ is prox-regular and subdifferentially continuous at any $\xb\in\dom f$ for every subgradient $\xba\in\partial f(\xb)$.

If $f:\R^n\to\oR$ is prox-regular at $\xb\in\dom f$ for $\xba\in\partial f(\xb)$, it is well-known that for any twice continuously differentiable function $g:\R^n\to\R$ the function $f+g$ is prox-regular at $\xb$ for the subgradient $\xba+\nabla g(\xb)\in\partial(f+g)(\xb)$, cf. \cite[Exercise 13.35]{RoWe98}. Subdifferential continuity, if present, is likewise preserved.

Allowing the prox-regularity parameter $r$ also to be negative leads to the following definition.

\begin{definition}[cf. {\cite{Ro24}}]\label{DefQuadrGr}
  Let $s$ be a real number. We say that the lsc function $f:\R^n\to\oR$ has the $s$-level {\em uniform quadratic growth property} at $\xb\in\dom f$ for the subgradient $\xba\in\partial f(\xb)$, if there is an open convex neighborhood $U\times V$ of $(\xb,\xba)$ together with some real $\rho>f(\xb)$ such that
  \begin{equation}\label{EqUnifQuadrGr} f(x')\geq  f(x)+\skalp{x^*,x'-x}+\frac s2 \norm{x'-x}^2\quad\mbox{ whenever }x'\in U,\ (x,x^*)\in\gph\partial_\rho f\cap(U\times V).\end{equation}
\end{definition}

Recall that a mapping $S:\R^n\tto\R^n$ is {\em monotone} if
\[\skalp{y_1-y_2,x_1-x_2}\geq 0\mbox{ for all }(x_1,y_1),(x_2,y_2)\in\gph S\]
and it is called {\em $\sigma$-strongly monotone} if $S-\sigma I$ is monotone.
Further, a monotone mapping $S:\R^n\tto\R^n$ is called {\em maximally monotone} if for any monotone mapping $S':\R^n\tto\R^n$ with $\gph S\subset\gph S'$ there holds $\gph S=\gph S'$.

There exist also local versions of these notions. A mapping $S:\R^n\tto\R^n$ is called monotone relative to a set $W\subset\R^n\times\R^n$, if
\[\skalp{y_1-y_2,x_1-x_2}\geq 0\mbox{ for all }(x_1,y_1),(x_2,y_2)\in\gph S\cap W.\]
The mapping $S$ is called {\em locally monotone} around a point $(\xb,\yb)\in\gph S$ if there exists  a neighborhood $U\times V$ of $(\xb,\yb)$ such that $S$ is monotone with respect to $U\times V$. Finally, $S$ is called {\em locally maximal monotone} around $(\xb,\yb)\in\gph S$ if there exists a neighborhood $U\times V$ of $(\xb,\yb)$ such that $S$ is monotone relative to $U\times V$ and $\gph S\cap(U\times V)=\gph S'\cap(U\times V)$ for any monotone mapping $S':\R^n\tto\R^n$ satisfying $\gph S'\supset \gph S\cap (U\times V)$.

The following proposition is an immediate consequence of a result proven very recently by Khanh, Khoa, Mordukhovich and Phat \cite[Theorem 3.3]{KhKhMoPh23b}, which is also valid in the infinite dimensional case.
\begin{proposition}\label{PropLocMaxMon}
  Let $S:\R^n\tto\R^n$ be a mapping and consider $(\xb,\yb)\in \gph S$. Then the following statements are equivalent.
  \begin{enumerate}
    \item[(i)] $S$ is locally maximal monotone around $(\xb,\yb)$.
    \item[(ii)] There exists a maximally monotone mapping $\bar S:\R^n\tto\R^n$ and a neighborhood $U\times V$ of $(\xb,\yb)$ such that $\gph S\cap(U\times V)=\gph \bar S\cap(U\times V)$.
  \end{enumerate}
\end{proposition}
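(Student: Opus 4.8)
The plan is to establish the two implications separately, carrying out the elementary direction in full and pinpointing the single place where the quoted result of Khanh, Khoa, Mordukhovich and Phat carries the mathematical weight.

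For (i) $\Rightarrow$ (ii) I would argue by extension. Let $U\times V$ be a neighborhood of $(\xb,\yb)$ as in the definition of local maximal monotonicity, and let $S_0:\R^n\tto\R^n$ be the localization with $\gph S_0=\gph S\cap(U\times V)$. Monotonicity of $S$ relative to $U\times V$ says exactly that $\gph S_0$ is a monotone set, so $S_0$ is a monotone mapping, and by the Kuratowski--Zorn lemma it admits a maximal monotone extension $\bar S$, i.e.\ $\bar S$ is maximally monotone with $\gph\bar S\supset\gph S\cap(U\times V)$. Now $\bar S$ is itself a monotone mapping whose graph contains $\gph S\cap(U\times V)$, so the maximality clause in the definition applies verbatim to $S'=\bar S$ and yields $\gph S\cap(U\times V)=\gph\bar S\cap(U\times V)$. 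This is precisely (ii), realized with the same neighborhood.

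For (ii) $\Rightarrow$ (i), let $\bar S$ and $U\times V$ be as in (ii), so that on $U\times V$ the mapping $S$ is nothing but a localization of the maximally monotone mapping $\bar S$. Relative monotonicity of $S$ on $U\times V$ is then immediate, since $\gph S\cap(U\times V)=\gph\bar S\cap(U\times V)\subset\gph\bar S$ and $\bar S$ is monotone. What remains, and what is the entire substance of the statement, is the maximality clause: I must produce a neighborhood $N\subseteq U\times V$ of $(\xb,\yb)$ such that every monotone mapping $S'$ with $\gph S'\supset\gph\bar S\cap N$ satisfies $\gph S'\cap N=\gph\bar S\cap N$. One inclusion is automatic; for the other, a point $(x,y)\in\gph S'\cap N$ is, by monotonicity of $S'$, monotonically related to all of $\gph\bar S\cap N$, that is $\skalp{y-y',x-x'}\ge 0$ for every $(x',y')\in\gph\bar S\cap N$, and I need to conclude $(x,y)\in\gph\bar S$, whence $(x,y)\in\gph\bar S\cap N=\gph S\cap N$.

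The main obstacle is exactly this last inference. Maximal monotonicity of $\bar S$ forces membership only for points monotonically related to the \emph{whole} graph, whereas here the relatedness is known only against the part of $\gph\bar S$ sitting inside $N$; this local-to-global upgrade is not robust under shrinking of $N$ and cannot be extracted from the soft extension argument above. That a maximally monotone mapping is nonetheless locally maximal monotone around each of its graph points---so that, on a sufficiently small neighborhood, monotone relatedness to the local graph already forces membership---is precisely the localization-of-maximality result \cite[Theorem~3.3]{KhKhMoPh23b}. Invoking it for $\bar S$ at $(\xb,\yb)$, and using that it is witnessed on arbitrarily small neighborhoods to place $N$ inside $U\times V$, delivers the required maximality clause and hence (i).
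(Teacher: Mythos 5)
The paper gives no argument of its own here: it declares the proposition an ``immediate consequence'' of \cite[Theorem 3.3]{KhKhMoPh23b}, which (as the remark following the proposition indicates) is precisely the equivalence between the Poliquin--Rockafellar notion in (i) and the Pennanen-type notion in (ii). So both you and the paper ultimately let that external theorem carry all the weight. Your (i)$\Rightarrow$(ii) is correct and complete: a Kuratowski--Zorn extension of the monotone set $\gph S\cap(U\times V)$ produces a maximally monotone $\bar S$ whose graph contains it, and the maximality clause applied with $S'=\bar S$ gives the required coincidence on $U\times V$.

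The step I cannot accept as written is, in (ii)$\Rightarrow$(i), the phrase ``using that it is witnessed on arbitrarily small neighborhoods to place $N$ inside $U\times V$.'' The maximality clause in the definition is \emph{not} inherited by smaller neighborhoods: the clause on $N$ says that every $(x,y)\in N$ monotonically related to $\gph \bar S\cap N$ lies in $\gph \bar S$, but for $N'\subset N$ a point of $N'$ monotonically related only to the \emph{smaller} set $\gph \bar S\cap N'$ need not be related to $\gph \bar S\cap N$, so the clause on $N$ yields nothing on $N'$. Nor can one patch this by adjoining $\gph \bar S\cap N$ to a monotone $S'\supset\gph \bar S\cap N'$, since the resulting union need not be monotone. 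Thus ``witnessed on arbitrarily small neighborhoods'' is an unproved (and, as a general principle about this definition, false) assertion; it would have to be extracted from the \emph{proof} of the cited theorem rather than its statement. The clean repair is to drop the detour through $\bar S$ and invoke \cite[Theorem 3.3]{KhKhMoPh23b} directly for the implication (ii)$\Rightarrow$(i) applied to $S$ itself --- which is exactly what the paper's one-line justification amounts to.
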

\begin{remark}Our definition of local maximal monotonicity corresponds to the one given in \cite{PolRo98} while the property stated in Proposition \ref{PropLocMaxMon}(ii) is related to another notion of local maximal monotonicity introduced in \cite{Pen02}.
\end{remark}
Next we define $f$-attentive localizations of the subdifferential mapping $\partial f$.
\begin{definition}[cf.{\cite{Ro24}}]\label{DefAttMono}Let $f:\R^n\to\oR$ be an lsc function and let $s\in\R$ be given. We say that $\partial f$ has at $\xb\in\dom f$ for $\xba\in\partial f(\xb)$ an {\em $f$-attentive localization that is $s$-monotone}, if there exists an open convex neighborhood $U\times V$ of $(\xb,\xba)$ and a real $\rho>f(\xb)$ such that
\[\skalp{y^*-x^*,y-x}\geq s\norm{y-x}^2\quad\mbox{whenever } (x,x^*),(y,y^*)\in \gph\partial_\rho f\cap(U\times V).\]
For $s=0$ this is plain monotonicity of $\partial f$ relative to the set $(U\cap L_f(\rho))\times V$.
\end{definition}
For local maximality properties of this $f$-attentive localizations of $\partial f$ we refer to \cite{Ro24}.

Another important concept is variational (strong) convexity which was recently introduced by Rockafellar \cite{Ro19} and extended in \cite{Ro24}. We augment this definition by introducing the {\em exact bound of variational convexity}.
\begin{definition}\label{DefVarConv}
Let $s$ be a real number. The lsc function $f:\R^n\to\oR$  will be called {\em variationally $s$-convex} at  $\xb\in \dom  f$ for $\xba\in\partial  f(\xb)$ if there is  some open convex neighborhood $U\times V$ of $(\xb,\xba)$, an lsc function $\widehat f:\R^n\to\oR$ and a real  $\rho>f(\xb)$ such that the quadratically shifted function $\widehat f-\frac s2\norm{\cdot}^2$ is convex on $U$, $\widehat f \leq  f$ on $U$  and
\[\gph\partial_\rho  f\cap (U\times V) = \gph \partial \widehat f\cap (U\times V) \quad \mbox{and $f(x)=\widehat f(x)$ at the common elements $(x,x^*)$}.\]
If this holds with $s=0$, we will call $f$ variationally convex at $\xb$ for $\xba$, whereas in case $s>0$ we call $f$ to be variationally strongly convex with modulus $s$.

The exact bound for variational  convexity of $f$  is defined as the supremum of all $s$ such that $f$ is variationally $s$-convex at $\xb$ for $\xba$ and is denoted by $\varco (f;\xb|\xba)$. We set $\varco(f;\xb|\xba) =-\infty$ if no such $s$ exist.
\end{definition}
Note that $\varco(f;\xb|\xba)$ can attain also the value $\infty$. Moreover, even if $\bar s:= \varco(f;\xb|\xba)\in\R$, $f$ does not have to be variationally $\bar s$-convex. E.g., for the function $f_1(x)=-x^4$ and $f_2(x)=0$ on $\R$ we have $\varco(f_i;0|0)=0$, $i=1,2$, but $f_1$ is not variationally convex at $0$ for $0$ whereas $f_2$ has this property there.

The following fundamental equivalence between the properties from Definitions \ref{DefQuadrGr}, \ref{DefAttMono} and \ref{DefVarConv} was very recently shown by Rockafellar \cite[Theorem 1]{Ro24}.
\begin{theorem}\label{ThFundEquiv}
  Given an lsc function $f:\R^n\to\oR$, a pair $(\xb,\xba)\in\gph\partial f$ and a real $s$, the following statements are equivalent.
  \begin{enumerate}
    \item[(i)] $f$ is variationally $s$-convex at $\xb$ for $\xba$.
    \item[(ii)] $f$ has the $s$-level uniform quadratic growth property at $\xb$ for $\xba$.
    \item[(iii)] $\partial f$ has an $f$-attentive localization at $\xb$ for $\xba$  that is $s$-monotone and $\xba\in\widehat\partial f(\xb)$.
  \end{enumerate}
\end{theorem}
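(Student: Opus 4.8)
The plan is to prove Theorem \ref{ThFundEquiv} by establishing a cycle of implications among the three statements, most naturally (i)$\Rightarrow$(ii)$\Rightarrow$(iii)$\Rightarrow$(i), exploiting the fact that all three conditions are phrased in terms of the same $f$-truncation $\gphtr\rho\partial f$ restricted to a common neighborhood $U\times V$. The linking device throughout is the elementary algebraic identity relating the quadratic growth inequality \eqref{EqUnifQuadrGr} to monotonicity: subtracting the growth inequality written for the pair $(x,x^*),y'=y$ from the one written for $(y,y^*),x'=x$ and adding yields $\skalp{y^*-x^*,y-x}\geq s\norm{y-x}^2$. This will handle the passage (ii)$\Rightarrow$(iii), modulo the extra regular-subgradient claim $\xba\in\widehat\partial f(\xb)$, which follows directly by taking $x=\xb$, $x^*=\xba$ and letting $x'\to\xb$ in \eqref{EqUnifQuadrGr}.

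For (i)$\Rightarrow$(ii), given the convex (after shift) function $\widehat f$ agreeing with $f$ on the common graph elements with $\widehat f\leq f$ on $U$, I would write the subgradient inequality for the convex function $\widehat f-\frac s2\norm{\cdot}^2$ at a point $(x,x^*)\in\gphtr\rho\partial f\cap(U\times V)$, so that $x^*-sx\in\partial(\widehat f-\frac s2\norm{\cdot}^2)(x)$, and evaluate it at an arbitrary $x'\in U$. Using $\widehat f(x)=f(x)$ at the common element and $\widehat f(x')\leq f(x')$ then upgrades the convex subgradient inequality for $\widehat f$ into exactly the desired quadratic growth inequality \eqref{EqUnifQuadrGr} for $f$. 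The shrinking of neighborhoods to keep everything inside $U\times V$ and below the level $\rho$ is routine bookkeeping.

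The implication (iii)$\Rightarrow$(i) is where I expect the \emph{main difficulty} to lie, since here one must \emph{construct} the convex majorized function $\widehat f$ out of the purely monotone data, and this is precisely the heart of the theorem. The natural candidate is to build $\widehat f$ as the convex function whose subgradient mapping is a maximal monotone extension of the localized $f$-attentive subdifferential: one first invokes a local-maximal-monotonicity statement (in the spirit of Proposition \ref{PropLocMaxMon}) to realize $\gphtr\rho\partial f\cap(U\times V)$ as a piece of the graph of a globally maximally monotone mapping $\bar S$ that is $s$-strongly monotone, then represents $\bar S-sI$ as the subdifferential of a convex function via the classical Rockafellar integration theorem for cyclically monotone maps, and finally adds back $\frac s2\norm{\cdot}^2$. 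The delicate points are ensuring the constructed $\widehat f$ satisfies $\widehat f\leq f$ on $U$ with equality at the common graph points, and verifying that the graphs genuinely coincide on a (possibly smaller) neighborhood rather than merely being related by inclusion; the regular-subgradient hypothesis $\xba\in\widehat\partial f(\xb)$ in (iii) is exactly what pins down the value $\widehat f(\xb)=f(\xb)$ and prevents the integrated convex function from drifting below $f$ in the wrong way.

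Since the excerpt attributes this equivalence to Rockafellar \cite[Theorem 1]{Ro24}, I would in practice cite that result; the sketch above records how one would reconstruct the argument from the monotonicity and integration machinery assembled in the preliminaries, with the convex-integration step in (iii)$\Rightarrow$(i) being the substantive core.
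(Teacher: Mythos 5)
The paper offers no proof of this theorem: it is quoted verbatim from Rockafellar \cite[Theorem 1]{Ro24}, so your fallback position of citing that result is exactly what the paper does, and your elementary implications are correct as far as they go. Specifically, (i)$\Rightarrow$(ii) via the subgradient inequality for the convex function $\widehat f-\frac s2\norm{\cdot}^2$ combined with $\widehat f\leq f$ on $U$ and $\widehat f(x)=f(x)$ at common graph points is sound (and is essentially how the paper itself uses variational convexity elsewhere, e.g.\ in Lemma \ref{LemBasLemma}), and (ii)$\Rightarrow$(iii) by symmetrizing the growth inequality, plus $\xba\in\widehat\partial f(\xb)$ from the case $(x,x^*)=(\xb,\xba)$, is also correct.

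The genuine gap is in your sketch of (iii)$\Rightarrow$(i). You propose to extend the $s$-monotone localization $\gphtr\rho\partial f\cap(U\times V)$ to a globally maximally monotone, $s$-strongly monotone $\bar S$ and then to write $\bar S-sI$ as $\partial\widehat g$ for a convex $\widehat g$ ``via the classical Rockafellar integration theorem for cyclically monotone maps.'' That theorem requires \emph{cyclic} monotonicity, and a maximal monotone extension of a merely monotone set is in general not cyclically monotone; maximal monotone operators are subdifferentials of convex functions only in the cyclically monotone case. Nothing in hypothesis (iii) hands you cyclic monotonicity of the localized graph --- indeed, that the localization is (after shifting) a piece of the subdifferential of a convex function is essentially the conclusion (i) you are trying to reach, so the argument as sketched is circular at its core step. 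Rockafellar's actual route is different: the hard implication is (iii)$\Rightarrow$(ii), established through the proximal-mapping/Moreau-envelope machinery for prox-regular functions (in the spirit of \cite{PolRo96}), and only then is $\widehat f$ built explicitly as the supremum of the affine (or quadratically shifted) minorants $x'\mapsto f(x)+\skalp{x^*,x'-x}+\frac s2\norm{x'-x}^2$ over $(x,x^*)\in\gphtr\rho\partial f\cap(U\times V)$ --- the construction recorded in Remark \ref{RemVarConv}(3) --- for which $\widehat f\leq f$ on $U$ is immediate \emph{from the growth inequality}, not from monotonicity alone. Your remark that $\xba\in\widehat\partial f(\xb)$ ``pins down'' $\widehat f(\xb)=f(\xb)$ is a correct instinct about why that hypothesis is needed, but it does not repair the missing cyclic-monotonicity step.
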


\begin{remark}\label{RemVarConv}
  \begin{enumerate}
    \item  The equivalence between (i) and (ii) in Theorem \ref{ThFundEquiv} has the following consequences: If $f$ is variationally $s$-convex at $\xb$ for $\xba$ then it is prox-regular at $\xb$ for $\xba$ with parameter value $r=\max\{-s,0\}$. Conversely, if $f$ is prox-regular at $\xb$ for $\xba$ with parameter value $r\geq 0$ then $f$ is also variationally $s$-convex there with $s=-r$.
    \item Let the function $f:\R^n\to\oR$ be  variationally $s$-convex at $\xb$ for $\xba\in\partial f(\xb)$ and let  $\widehat f$, $U$, $V$ and $\rho$ be as in Definition \ref{DefVarConv}.
    Then  for any convex neighborhood $\tilde U\times\tilde V\subset U\times V$ of $(\xb,\xba)$ we also have $\gph\partial_\rho  f\cap ( U\times V) = \gph \partial \widehat f\cap (\tilde U\times \tilde V)$ and $f(x)=\widehat f(x)$ at the common elements $(x,x^*)$. Further, for every $\tilde\rho\in(f(\xb),\rho]$ there is an open convex neighborhood $\tilde U\times \tilde V\subset U\times V$ of $(\xb,\xba)$ with
    $\gph \partial_{\tilde\rho}  f\cap (\tilde U\times \tilde V) = \gph \partial \widehat f\cap (\tilde U\times \tilde V)$ and $f(x)=\widehat f(x)$ at the common elements $(x,x^*)$. To see this, note that $\widehat f$ is prox-regular and subdifferentially continuous at $\xb$ for $\xba$ due to convexity of $\widehat f-\frac s2\norm{\cdot}^2$. Hence we can find an open convex neighborhood $\tilde U\times \tilde V\subset U\times V$ of $(\xb,\xba)$ such that for every $(x,x^*)\in \gph \partial \widehat f\cap(\tilde U\times \tilde V)$ there holds $f(x)=\widehat f(x)<\tilde\rho$
    and hence
    \begin{align*}\lefteqn{\gph \partial\widehat f\cap(\tilde U\times \tilde V)= \gph \partial\widehat f\cap(U\times V)\cap(\tilde U\times \tilde V)\cap (L_f(\tilde \rho)\times\R^n)}\\
    &=\gph \partial f \cap(U\times V)\cap(L_f(\rho)\times\R^n)\cap(\tilde U\times \tilde V)\cap (L_f(\tilde\rho)\times\R^n)=\gph \partial_{\tilde\rho} f\cap (\tilde U\times \tilde V).\end{align*}
    \item In Definition \ref{DefVarConv}, we can assume that  the  function $\widehat f-\frac s2\norm{\cdot}^2$ is not only locally convex on $U$, but also convex and finite on whole $\R^n$ and therefore strictly continuous. Let us consider the case $s=0$. If $f$ is variationally convex at $\xb$ for $\xba$ then by Theorem \ref{ThFundEquiv} the relation
    \eqref{EqUnifQuadrGr} holds with $s=0$  and we may assume that the convex neighborhood $U\times V$ of $(\xb,\xba)$ is bounded. Now let
    \[\widehat f(x'):= \sup\{f(x)+\skalp{x^*,x'-x}\mv (x,x^*)\in \gph \partial_\rho f \cap (U\times V)\}.\]
    As the supremum of a collection of affine functions of $x'$, $\widehat f$ is lsc convex. Further, since $f(x)$ is bounded above by $\rho$ for $(x,x^*)\in \gph\partial_\rho f$ and $U\times V$ is bounded, $\widehat f$ is everywhere finite and thus strictly continuous. Now a close look at Rockafellar's proof \cite[pp.553--554]{Ro19} of the implication $(c)\Rightarrow(b)$ in \cite[Theorem 1]{Ro19} tells us that $\widehat f\leq f$ on $U$ and that we can find some open convex neighborhood $\tilde U\times \tilde V\subset U\times V$ of $(\xb,\xba)$ such that $\gph \partial_\rho  f\cap (\tilde U\times \tilde V) = \gph \partial \widehat f\cap (\tilde U\times \tilde V)$ and $f(x)=\widehat f(x)$ at the common elements $(x,x^*)$. Thus our assertion is verified for $s=0$ and for general $s\in\R$ the statement is a consequence of Lemma \ref{LemQuadrShift} below.
  \end{enumerate}
  \end{remark}

We now turn our attention to generalized derivatives of set-valued mappings. Given a set $\Omega\subset\R^n$ and a point $\xb\in\Omega$, the tangent cone, the regular normal cone and the limiting normal cone to $\Omega$ at $\xb$ are given by
\begin{gather*}
T_\Omega(\xb):=\limsup_{t\downarrow 0}\frac{\Omega-\xb}t=\{u\in\R^n\mv \exists t_k\downarrow 0,\ x_k\setto{\Omega}\xb:\ u=\lim_{k\to\infty}\frac{x_k-\xb}{t_k}\},\\
\widehat N_\Omega(\xb):=\big(T_\Omega(\xb)\big)^\circ,\\
N_\Omega(\xb):=\limsup_{x\setto{\Omega}\xb}\widehat N_\Omega(x).
\end{gather*}
Given a set-valued mapping $F:\R^n\tto\R^m$ and a point $(\xb,\yb)\in\gph F$, we  now define the graphical derivative $DF(\xb,\yb):\R^n\tto\R^m$, the regular coderivative $\widehat D^*F(\xb,\yb):\R^m\tto\R^n$ and the limiting coderivative $D^*F(\xb,\yb):\R^m\tto\R^n$ by
\begin{gather*}\gph DF(\xb,\yb):=T_{\gph F}(\xb,\yb),\\
\gph \widehat D^*F(\xb,\yb)=\{(y^*,x^*)\mv(x^*,-y^*)\in\widehat N_{\gph F}(\xb,\yb)\},\\
\gph D^*F(\xb,\yb)=\{(y^*,x^*)\mv(x^*,-y^*)\in N_{\gph F}(\xb,\yb)\}.
\end{gather*}
Let us now consider the  SC (subspace containing) derivatives  for $F$ as defined very recently in \cite{GfrOut22, GfrOut23}. We denote by $\Z_{nm}$ the metric space of all $n$-dimensional subspaces of $\R^n\times\R^m$ with metric
\[d_\Z(L_1,L_2):=\norm{P_{L_1}-P_{L_2}},\]
where $P_{L_i}$, $i=1,2$ denotes the orthogonal projection onto $L_i$. Since orthogonal projections are bounded, the metric space $\Z_{nm}$ is compact.

Given a subspace $L\in \Z_{nm}$ we define its adjoint subspace $L^*\in\Z_{mn}$ by
\[L^*=\{(v^*,u^*)\mv (u^*,-v^*)\in L^\perp\}.\]
Note that $(L^*)^*=L$ and $d_\Z(L_1,L_2)=d_\Z(L_1^*,L_2^*)$.
\begin{definition}Let $F:\R^n\tto\R^m$ be a mapping.
\begin{enumerate}
\item
We say that $F$ is {\em graphically smooth of dimension} $n$ at $(x,y)\in\gph F$ if $T_{\gph F}(x,y)\in \Z_{nm}$. By $\mathcal{O}_{F}$
 we denote the subset of $\gph F$, where $F$ is graphically smooth of dimension $n$.
\item The {\em SC derivative} of $F$ is the mapping $\Sp F:\R^n\times\R^m\tto\Z_{nm}$ given by
\[\Sp F(x,y)=
\{L\in\Z_{nm}\mv \exists (x_k,y_k)\longsetto{\OO_F}(x,y):\ \lim_{k\to\infty} d_\Z(T_{\gph F}(x_k,y_k),L)=0\},\]
whereas the {\em adjoint SC derivative} $\Sp^* F:\R^n\times\R^m\tto\Z_{mn}$ is defined by
\[\Sp^*F(x,y)=\{L^*\mv L\in \Sp F(x,y)\}.\]
\end{enumerate}
\end{definition}
Note that the adjoint SC derivative was defined in \cite{GfrOut23} in a different but equivalent manner. The adjoint SC derivative has the remarkable property that
\begin{equation}\label{EqInclAdjSubsp}L^*\subset \gph D^*F(\xb,\yb)\ \forall L^*\in\Sp^*F(\xb,\yb).\end{equation}
The adjoint SC derivative may be therefore considered as a kind of skeleton for the limiting coderivative. Let us now discuss the differences in computing the limiting coderivative
$D^*F(\xb,\yb)$ and the SC derivative $\Sp F(\xb,\yb)$. The limiting coderivative is based on the limiting normal cone $N_{\gph F}(\xb,\yb)$ which is the upper limit of regular normal cones. By taking the definition of the regular normal cone via the polar cone of the tangent cone, this means that we have to compute tangent cones $T_{\gph F}(x,y)$ for all $(x,y)\in\gph F$ close to $(\xb,\yb)$. In contrary, in order to compute the SC derivative $\Sp F(\xb,\yb)$, we only need the tangent cone at those points $(x,y)\in\gph F$ where $T_{\gph F}(x,y)$ is a subspace. This does not only mean that we need to compute less tangent cones, but also the computation of the required tangent cones should be simpler. Usually, when the computation of a tangent cone gets involved, it is not a subspace and can be therefore discarded.

For the subdifferential of a prox-regular and subdifferentially continuous  function we have the following result, which can be derived from \cite[Proposition 3.6, Lemma 3.4]{Gfr24} and gives a certain basis representation of subspaces $L\in\Sp(\partial f)(x,x^*)$.

\begin{proposition}\label{PropSCDProxRegSubDiff}
  Assume that the lsc function $f:\R^n\to\oR$ is prox-regular and subdifferentially continuous at $\xb\in\dom f$ for $\xba\in\partial f(\xb)$. Then there exists  an open convex neighborhood $U\times V$ of $(\xb,\xba)$  such that  for every $(x,x^*)\in\gph\partial f\cap(U\times V)$ there holds $\emptyset\not=\Sp(\partial f)(x,x^*)= \Sp^*(\partial f)(x,x^*)$. Further, for  every $L\in\Sp(\partial f)(x,x^*)$ there holds $L=L^*$ and there exist unique symmetric $n\times n$ matrices $P,W$ with the following properties:
  \begin{enumerate}
  \item[(i)] $L=\rge(P,W):=\{(Pp,Wp)\mv p\in\R^n\}$,
  \item[(ii)] $P^2=P$, i.e., $P$ represents the orthogonal projection onto some subspace of $\R^n$,
  \item[(iii)] $W(I-P)=I-P$.
  \end{enumerate}
\end{proposition}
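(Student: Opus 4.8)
The plan is to reduce the assertion to the monotone case by a quadratic shift and then to exploit the Lipschitz manifold structure of the graph of a localized maximal monotone subdifferential. Let $r\ge 0$ be a prox-regularity parameter of $f$ at $\xb$ for $\xba$ and put $g:=f+\frac r2\norm{\cdot}^2$. By the remarks following Definition \ref{DefProxReg}, $g$ is again prox-regular and subdifferentially continuous at $\xb$ for $\yb^*:=\xba+r\xb$, and now with prox-regularity parameter $0$; hence by Remark \ref{RemVarConv}(1), Theorem \ref{ThFundEquiv} and Proposition \ref{PropLocMaxMon} there is an open convex neighborhood $\tilde U\times\tilde V$ of $(\xb,\yb^*)$ on which $\gph\partial g$ coincides with the graph of a maximally monotone mapping. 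Writing $\Phi(x,x^*):=(x,x^*+rx)=M\cdot(x,x^*)$ with $M=\myvec{I&0\\ rI&I}$, we have $\gph\partial g=M(\gph\partial f)$ locally, and since $M$ is a linear isomorphism it maps tangent cones to tangent cones and graphically smooth points to graphically smooth points and induces a homeomorphism of $\Z_{nn}$; consequently $\Sp(\partial g)(\Phi(x,x^*))=\{ML\mv L\in\Sp(\partial f)(x,x^*)\}$. It therefore suffices to prove the three claims for $\partial g$ and to transport them back through $M$.

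For the monotone case I would use the Moreau envelope. For $\lambda\in(0,1/r)$ the envelope $e_\lambda g$ is convex and continuously differentiable with Lipschitz gradient near $w_0:=\xb+\lambda\yb^*$, the proximal map $P_\lambda g$ is single-valued Lipschitz there, and the skew projection $\pi(a,b):=a+\lambda b$ maps $\gph\partial g$ bi-Lipschitzly onto a neighborhood of $w_0$ with inverse $w\mapsto(P_\lambda g(w),\nabla e_\lambda g(w))$; thus the localized $\gph\partial g$ is an $n$-dimensional Lipschitz manifold. By Alexandrov's theorem $e_\lambda g$ is twice differentiable at almost every $w$, and at such a point, with $H:=\nabla^2 e_\lambda g(w)\succeq 0$ symmetric, the manifold is differentiable with
\[T_{\gph\partial g}\big(P_\lambda g(w),\nabla e_\lambda g(w)\big)=\rge\big(I-\lambda H,\,H\big).\]
Diagonalizing $H$ shows eigendirection-by-eigendirection that this subspace equals its adjoint. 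Since graphical smoothness of a point forces the bi-Lipschitz chart to be differentiable there (a subspace tangent cone projects bijectively under the linear map $\pi$), graphically smooth points and twice-differentiability points coincide; as the latter have full measure they are dense, so with the compactness of $\Z_{nn}$ one gets $\Sp(\partial g)(\cdot)\neq\emptyset$ on a neighborhood of $(\xb,\yb^*)$, while continuity of the adjoint operation, $d_\Z(L_1,L_2)=d_\Z(L_1^*,L_2^*)$, propagates self-adjointness to every limit subspace. Hence $L=L^*$ for all $L\in\Sp(\partial g)$ and therefore $\Sp(\partial g)=\Sp^*(\partial g)$.

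Self-adjointness transfers to $f$ because $M$ is symplectic: $M^TJM=J$ with $J=\myvec{0&I\\-I&0}$, from which a short computation gives $(ML)^*=M(L^*)$ for every $L\in\Z_{nn}$; thus $ML=(ML)^*$ is equivalent to $L=L^*$, yielding $\Sp(\partial f)=\Sp^*(\partial f)\neq\emptyset$ and $L=L^*$ for each $L\in\Sp(\partial f)(x,x^*)$. It remains to produce $P,W$, which is pure linear algebra for a self-adjoint $L\in\Z_{nn}$: take $P$ to be the orthogonal projection onto $S:=\{u\mv(u,v)\in L\text{ for some }v\}$ (so $P^2=P=P^T$), and define $W$ as the identity on $S^\perp$ and, on $S$, the symmetric operator whose graph over $S$ is the $S$-component of $L$ — self-adjointness of $L$ is exactly what makes this $S$-block symmetric. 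Then $L=\rge(P,W)$ and $W(I-P)=I-P$ by construction. Uniqueness follows since any admissible $P$ must have range $S$ (the first-factor projection of $\rge(P,W)$), while $W(I-P)=I-P$ together with symmetry forces $W$ to act as the identity on $S^\perp$ with vanishing off-diagonal blocks, its $S$-block being determined by $L$.

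The main obstacle I anticipate is the self-adjointness claim for the whole SC derivative rather than merely at almost every point. Computing the tangent space and checking its self-adjointness at twice-differentiability points is routine, but one must argue that these points are cofinal for $\Sp$, i.e. that every subspace in $\Sp(\partial g)$ is a limit of such tangent spaces. The clean route is the coincidence of graphically smooth points with twice-differentiability points via the bijectivity of $\pi$ on subspace tangent cones; verifying this coincidence carefully, rather than merely invoking density, is the delicate step, and it is precisely here that subdifferential continuity enters, ruling out spurious branches of $\gph\partial f$ carrying incompatible function values.
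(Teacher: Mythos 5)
First, note that the paper does not actually prove this proposition: it is imported wholesale from \cite[Proposition 3.6, Lemma 3.4]{Gfr24}, so your proposal is being measured against the literature rather than against an in-paper argument. Your overall route --- quadratic shift to the monotone case, local identification of $\gph\partial g$ with $\gph\partial\widehat g$ for a convex $\widehat g$ (here, not at the end, is where subdifferential continuity is really consumed: it lets you drop the $f$-truncation so that the \emph{ordinary} tangent cones and SC derivatives see only the convex branch), Minty parametrization $w\mapsto(P_\lambda\widehat g(w),\nabla e_\lambda\widehat g(w))$, Alexandrov, and transport back via the symplectic matrix $M$ --- is the classical Poliquin--Rockafellar mechanism and is viable. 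The linear-algebra construction of $(P,W)$ from a self-adjoint $L$ and the uniqueness argument are correct, and the identity $(ML)^*=M(L^*)$ from $M^TJM=J$ is exactly the computation the paper performs in Proposition \ref{PropCalc}. (Minor repair: work with $e_\lambda\widehat g$ for the globally convex $\widehat g$ of Remark \ref{RemVarConv}(3) rather than with $e_\lambda g$ itself; the envelope of the merely prox-regular $g$ need not be convex or even well-defined without prox-boundedness, whereas tangent cones only depend on the local graph, which the two functions share.)

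The genuine soft spot is the step you yourself flagged, and your proposed fix does not quite close it. Bijectivity of $\pi(a,b)=a+\lambda b$ on a subspace tangent cone (which needs $\skalp{a,b}\ge 0$ on tangent vectors, inherited from monotonicity of the graph --- worth stating) shows that at a graphically smooth point $w$ the Lipschitz map $\nabla e_\lambda\widehat g$ is \emph{differentiable} with some derivative $H$. But this is not yet ``twice differentiability'' in Alexandrov's sense: Alexandrov gives a \emph{symmetric} $H$ only at almost every $w$, and a graphically smooth point need not be one of those points. Since $\rge(I-\lambda H,H)$ is self-adjoint \emph{if and only if} $H=H^T$, you cannot conclude $T=T^*$ at every graphically smooth point, and you do need it at every such point (an $L\in\Sp(\partial g)$ is by definition a limit of tangent spaces at graphically smooth points, not at Alexandrov points, and density of the latter does not by itself make them cofinal in that limit). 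The missing ingredient is the elementary fact that the derivative of a \emph{continuous gradient field} is automatically symmetric wherever it exists: if $\nabla e_\lambda\widehat g(w+h)=\nabla e_\lambda\widehat g(w)+Hh+\oo(\norm{h})$, then integrating over the boundary of a small square spanned by $e_i,e_j$ and using that closed line integrals of $\nabla e_\lambda\widehat g$ vanish forces $H_{ij}=H_{ji}$. With that lemma inserted, every graphically smooth tangent space is self-adjoint, density of Alexandrov points gives $\Sp\neq\emptyset$, and continuity of $L\mapsto L^*$ passes self-adjointness to the limits; the rest of your argument then goes through.
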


Conditions (ii) and (iii) above ensure that the following relations hold:
\begin{gather}\label{EqPW1}(I-P)W=\big(W(I-P)\big)^T=(I-P)^T=I-P=W(I-P),\\
\label{EqPW2}PW=WP=PWP,\\
\label{EqPW3}W=WP+W(I-P)=PWP+(I-P).
\end{gather}
In what follows, we denote by $\M_{P,W}\partial f(x,x^*)$ the collection of all pairs of symmetric $n\times n$ matrices $(P,W)$ satisfying properties (ii) and (iii) of Proposition \ref{PropSCDProxRegSubDiff} and $\rge(P,W)\in \Sp(\partial f)(x,x^*)$. Hence
\[\Sp(\partial f)(x,x^*)=\{\rge(P,W)\mv (P,W)\in\M_{P,W}\partial f(x,x^*)\}.\]
Note that in case when $f$ is twice continuously differentiable then $\M_{P,W}\partial f(x,x^*)=\{(I,\nabla^2 f(x))\}$, cf. \cite{Gfr24}.

Now let us recall the definition of tilt-stability.

\begin{definition}[cf. {\cite{PolRo98}}]\label{DefTiltStab}
Given $f:\R^n\to\oR$, a point $\xb\in\dom f$  is a tilt-stable local minimizer of $f$  (with modulus $\kappa$) if there exists a number $\gamma  > 0$ such that the
mapping
\begin{equation}\label{EqM_Tilt} M_\gamma:x^*\mapsto  \argmin\bigl\{  f (x)-\skalp{x^*, x}\mv  x \in  \oB_\gamma(\xb)\bigr\}
\end{equation}
is single-valued and Lipschitz continuous (with constant $\kappa$) on some neighborhood $V$ of $0 \in  \R^n$ with $M_\gamma(0) = \{\xb\}$.

The {\em exact bound of tilt stability} of $f$ at $\xb$ is defined by
\begin{eqnarray}\label{tilt-exact}
\tilt(f,\xb):=\limsup_{\AT{x^*,y^*\to0}{x^*\not=y^*}}\frac{\norm{M_\gamma(x^*)-M_\gamma(y^*)}}{\norm{x^*-y^*}}.
\end{eqnarray}
\end{definition}

For prox-regular and subdifferentially continuous functions there exist a bunch of characterizations of tilt-stability in the literature, cf. \cite{PolRo98,DruLew13, MoNg15,ChHiNg18}.
We present here the following ones. Recall that a mapping $F:\R^n\tto\R^m$ is said to be {\em strongly metrically regular} with modulus $\kappa$ at $\xb$ for $\yb\in F(\xb)$, if $F^{-1}$ has a single-valued localization around $\yb$ for $\xb$ which is Lipschitz continuous with constant $\kappa$.
\begin{theorem}\label{ThTiltProxRegSubdiffCont}
  For an lsc  function $f:\R^n\to\oR$ having $0\in\partial f(\xb)$ and such that $f$ is both prox-regular and subdifferentially continuous at $\xb$ for $\xba=0$, the following statements are equivalent:
  \begin{enumerate}
\item[(i)] $\xb$ is a tilt-stable local minimizer of $f$.
\item[(ii)] The limiting coderivative $D^*(\partial f)(\xb,0)$ is positive definite in the sense that
\[\skalp{z^*,z}>0\quad\mbox{whenever }z^*\in D^*(\partial f)(\xb,0)(z),\ z\not=0.\]
\item[(iii)] For every pair $(P,W)\in\M_{P,W}\partial f(\xb,0)$, the matrix $W$ is nonsingular and the matrix $PW^{-1}$ is positive semidefinite.
\end{enumerate}
These statements entail  that  there is $\delta>0$ such that for all $x^*$ in some neighborhood
of $0$, the mapping $M_\gamma$ in Definition \ref{DefTiltStab} has $M_\gamma(x^*)$ as the unique $x\in (\partial f)^{-1}(x^*)$  with $\norm{x-\xb}<\delta$ and consequently $\partial f$ is strongly metrically regular at $\xb$ for $0$ with modulus equal to the modulus of tilt-stability. Further,
\[\tilt(f,\xb)=\sup\{\norm{PW^{-1}}\mv (P,W)\in\M_{P,W}\partial f(\xb,0)\} = \sup\left\{\frac {\norm{z}^2}{\skalp{z^*,z}}\mv z^*\in D^*(\partial f)(\xb,0)(z)\right\}\]
 with the convention $\frac 00:=0$. 
 \end{theorem}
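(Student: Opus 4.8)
The plan is to read this as an assembly of two complementary characterizations, glued together by a linear-algebra computation based on the basis representation of Proposition~\ref{PropSCDProxRegSubDiff}. The equivalence (i)$\Leftrightarrow$(ii) together with the coderivative form of the exact bound, $\tilt(f,\xb)=\sup\{\norm{z}^2/\skalp{z^*,z}\mv z^*\in D^*(\partial f)(\xb,0)(z)\}$, is the classical characterization of tilt stability for prox-regular, subdifferentially continuous functions, and I would simply invoke it, citing \cite{PolRo98} for the positive-definiteness criterion and \cite{MoNg15} (cf. also \cite{ChHiNg18}) for the exact-bound formula. The genuine work is therefore to establish (ii)$\Leftrightarrow$(iii) and that the coderivative supremum coincides with $\sup\{\norm{PW^{-1}}\mv(P,W)\in\M_{P,W}\partial f(\xb,0)\}$.

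The decisive structural fact I would establish first is the identity
\[\gph D^*(\partial f)(\xb,0)=\bigcup\{L\mv L\in\Sp(\partial f)(\xb,0)\}=\bigcup\{\rge(P,W)\mv(P,W)\in\M_{P,W}\partial f(\xb,0)\}.\]
The inclusion ``$\supseteq$'' is \eqref{EqInclAdjSubsp} combined with $L=L^*$ from Proposition~\ref{PropSCDProxRegSubDiff}; the reverse inclusion asserts that for this class the adjoint SC derivative is not merely a skeleton but exhausts the limiting coderivative. This follows from the framework of \cite{GfrOut22}: after a Minty parametrization $\gph\partial f$ is locally a Lipschitzian $n$-manifold, so $N_{\gph\partial f}(\xb,0)$ is the upper limit of the orthogonal complements of the tangent subspaces at graphically smooth points, which is precisely what $\Sp(\partial f)(\xb,0)$ records; translating between normal cone, adjoint subspace and coderivative via the definitions yields the displayed equalities. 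I expect this identity to be the main obstacle, since it is exactly the point where subdifferential continuity enters and where the non-subspace tangent cones must be shown not to contribute additional normals.

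Granting the identity, everything reduces to a fixed subspace $L=\rge(P,W)$. Using \eqref{EqPW1} I would pass to orthonormal coordinates adapted to the splitting $\rge P\oplus\ker P$; there $P=\big(\begin{smallmatrix}I&0\\0&0\end{smallmatrix}\big)$ and the constraints force the block form $W=\big(\begin{smallmatrix}A&0\\0&I\end{smallmatrix}\big)$ with $A=PWP$ symmetric on $\rge P$. A generic element of $L$ is $(z,z^*)=(Pp,Wp)$, and writing $p=(\xi,\eta)$ gives $z=(\xi,0)$ and $\skalp{z^*,z}=\skalp{A\xi,\xi}$, independent of $\eta$. Hence positive definiteness of the coderivative along $L$ means $\skalp{A\xi,\xi}>0$ for all $\xi\neq0$, i.e.\ $A\succ0$; ranging over all $(P,W)$ this is (ii). On the other hand $W$ is nonsingular iff $A$ is, and $PW^{-1}=\big(\begin{smallmatrix}A^{-1}&0\\0&0\end{smallmatrix}\big)$ is positive semidefinite iff $A\succ0$, which is (iii); thus (ii)$\Leftrightarrow$(iii), the two conditions quantifying over the same index set $\M_{P,W}\partial f(\xb,0)$.

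For the exact bound I would compute, for each $L$, that $\sup_{\xi\neq0}\norm{\xi}^2/\skalp{A\xi,\xi}=1/\lambda_{\min}(A)=\norm{A^{-1}}=\norm{PW^{-1}}$, the element $p$ with $\xi=0$ being absorbed by the convention $\frac{0}{0}:=0$. Taking the supremum over all subspaces and invoking the union identity then shows that the coderivative supremum equals $\sup\{\norm{PW^{-1}}\mv(P,W)\in\M_{P,W}\partial f(\xb,0)\}$, completing the chain of equalities for $\tilt(f,\xb)$. The only delicate points are the degenerate configurations ($z=0$, or $z\neq0$ with $\skalp{z^*,z}\le0$, which under (ii) cannot occur for $z\neq0$) and the interchange of the per-subspace suprema with the union, both of which are routine once the structural identity is in place.
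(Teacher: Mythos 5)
Your reduction of (ii)$\Leftrightarrow$(iii) to the identity $\gph D^*(\partial f)(\xb,0)=\bigcup\{L\mv L\in\Sp(\partial f)(\xb,0)\}$ is where the argument breaks down: that identity is false in general, and your justification of it (that for a Lipschitzian $n$-manifold the limiting normal cone is the upper limit of the orthogonal complements of the tangent subspaces at graphically smooth points) is precisely the false step. The limiting normal cone is the upper limit of \emph{regular} normal cones at \emph{all} nearby points of the graph, including the reference point itself, and at points where $T_{\gph\partial f}$ is not a subspace the regular normal cone can be a cone of dimension larger than $n$ which is not contained in any union of limits of $n$-dimensional normal subspaces. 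A counterexample satisfying all hypotheses of the theorem is $f=\delta_{[0,\infty)}$ on $\R$ with $\xb=0$, $\xba=0$: here $\gph\partial f=\big(\{0\}\times(-\infty,0]\big)\cup\big([0,\infty)\times\{0\}\big)$, so $\widehat N_{\gph\partial f}(0,0)=(-\infty,0]\times[0,\infty)$ is two-dimensional and hence so is a subset of $\gph D^*(\partial f)(0,0)$, whereas $\Sp(\partial f)(0,0)=\{\{0\}\times\R,\ \R\times\{0\}\}$ and the union of these two lines cannot cover a two-dimensional cone. Consequently your route only yields the inclusion $\bigcup L\subset\gph D^*(\partial f)(\xb,0)$ (which is \eqref{EqInclAdjSubsp} combined with $L=L^*$), hence only the implication (ii)$\Rightarrow$(iii) and the inequality $\sup\{\norm{PW^{-1}}\}\le\sup\{\norm{z}^2/\skalp{z^*,z}\}$; the converse implication (iii)$\Rightarrow$(ii) and the equality of the two suprema are exactly what remains unproved. (Both are true, but not for the reason you give.)

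The per-subspace linear algebra in your proposal --- the block forms of $P$ and $W$ forced by \eqref{EqPW1}--\eqref{EqPW3}, the equivalence of $\skalp{A\xi,\xi}>0$ for $\xi\neq 0$ with nonsingularity of $W$ together with positive semidefiniteness of $PW^{-1}$, and the computation $\sup_{\xi\neq0}\norm{\xi}^2/\skalp{A\xi,\xi}=\norm{PW^{-1}}$ --- is correct, and it is indeed how one translates between the subspace data and condition (iii). But the paper never closes the loop through the coderivative: it proves (i)$\Leftrightarrow$(ii) by citing \cite{PolRo98}, proves (i)$\Leftrightarrow$(iii) \emph{separately} by citing \cite[Theorem 7.9]{GfrOut22}, which characterizes tilt stability directly in terms of the SC derivative rather than by exhausting the coderivative, and takes the two formulas for $\tilt(f,\xb)$ from \cite[Theorem 7.9]{GfrOut22} and \cite[Theorem 3.6]{MoNg15}, respectively. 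If you want to avoid citing the SC-derivative characterization of tilt stability, you need an independent argument for (iii)$\Rightarrow$(i) or (iii)$\Rightarrow$(ii); the union identity will not supply it.
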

\begin{proof}
  The equivalence between (i) and (ii) together with the identity $M_\gamma(x^*)=(\partial f)^{-1}(x^*)\cap \B_\delta(\xb)$ (without mentioning strong metric regularity of $\partial f$) was established in \cite[Theorem 1.3]{PolRo98}. The equivalence between (i) and (iii) follows from \cite[Theorem 7.9]{GfrOut22} by using the $(P,W)$-basis representation of subspaces $L\in \Sp(\partial f)(\xb,0)$. The formulas for ${\rm tilt\;}(f,\xb)$ can be found in \cite[Theorem 7.9]{GfrOut22} and \cite[Theorem 3.6]{MoNg15}, respectively.
\end{proof}
The following equivalence between tilt-stability and variational strong convexity goes back to \cite{PolRo98}, but without naming strong variational convexity. We thus refer here to the recent paper \cite{KhMoPh23a}.
\begin{theorem}[{cf. \cite[Proposition 2.9]{KhMoPh23a}}]\label{ThEquivTilt_VSC}
 Let the lsc function  $f:\R^n\to\R$  be  prox-regular and subdifferentially continuous at $\xb \in  \dom f$  for $0\in\partial f(\xb)$. Then the
following assertions are equivalent:
\begin{enumerate}
\item[(i)]$f$  is variationally strongly convex at $\xb$  for $0$ with modulus $\sigma  > 0$.
\item[(ii)] $\xb$ is a tilt-stable local minimizer of $f$  with modulus $\sigma^{-1}$.
\end{enumerate}
\end{theorem}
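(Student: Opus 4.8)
The plan is to prove the two implications separately, exploiting in both directions the fundamental equivalence of Theorem~\ref{ThFundEquiv} together with the assumed subdifferential continuity, which lets me replace the $f$-attentive localizations of Definition~\ref{DefAttMono} by ordinary localizations of $\partial f$ around $(\xb,0)$: since $f$ is subdifferentially continuous, the $f$-truncation $\gphtr\rho\partial f$ coincides with $\gph\partial f$ on a sufficiently small neighborhood of $(\xb,0)$, so the growth and monotonicity conditions become purely graphical. Thus assertion (i) is equivalent, via Theorem~\ref{ThFundEquiv}, to the statement that some localization of $\partial f$ around $(\xb,0)$ is $\sigma$-strongly monotone with $0\in\widehat\partial f(\xb)$, whereas assertion (ii) asks that the localized inverse $M_\gamma$ of $\partial f$ be single-valued and Lipschitz with constant $\sigma^{-1}$. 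The heart of the matter is therefore the operator-theoretic duality between $\sigma$-strong monotonicity of a locally maximal monotone subgradient mapping (cf.\ Proposition~\ref{PropLocMaxMon}) and single-valued $\sigma^{-1}$-Lipschitz continuity of its inverse.

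For (i)$\Rightarrow$(ii) I would start from the convex surrogate $\widehat f$ of Definition~\ref{DefVarConv}, which by Remark~\ref{RemVarConv}(3) can be taken with $\widehat f-\frac\sigma2\norm{\cdot}^2$ convex and finite on all of $\R^n$, so that $\partial\widehat f$ is $\sigma$-strongly monotone and its inverse $(\partial\widehat f)^{-1}=\nabla\widehat f^{\,*}$ is single-valued and Lipschitz with constant $\sigma^{-1}$. It then remains to identify the tilt-minimizer $M_\gamma(x^*)$ with $(\partial\widehat f)^{-1}(x^*)$ for all small $x^*$ and suitably small $\gamma$. Here I would use the $\sigma$-level uniform quadratic growth property (Theorem~\ref{ThFundEquiv}(ii)) to guarantee that the global minimizer of $f(\cdot)-\skalp{x^*,\cdot}$ over $\oB_\gamma(\xb)$ is unique, lies in the interior of the ball, and falls in the region where $f=\widehat f$; the inequality $\widehat f\le f$ on $U$ ensures that no competitor outside this region can do better. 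Single-valuedness, Lipschitz continuity with constant $\sigma^{-1}$, and $M_\gamma(0)=\{\xb\}$ then follow directly, giving tilt-stability with modulus $\sigma^{-1}$.

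For (ii)$\Rightarrow$(i) the key device is the value function $\psi(x^*):=\sup\{\skalp{x^*,x}-f(x)\mv x\in\oB_\gamma(\xb)\}$, which, as a supremum of affine functions of $x^*$, is convex, and whose gradient is precisely $\nabla\psi(x^*)=M_\gamma(x^*)$ by a Danskin-type argument once the maximizer is unique. Tilt-stability makes $\nabla\psi$ single-valued and Lipschitz with constant $\sigma^{-1}$, so the conjugate $\psi^{\,*}$ is $\sigma$-strongly convex, and I would show that on a neighborhood of $(\xb,0)$ the graph of $\partial f$ coincides with that of $\partial\psi^{\,*}$, with $\psi^{\,*}\le f$ locally and equality holding at the common subgradient points. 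This exhibits $\psi^{\,*}$ as the convex surrogate $\widehat f$ required in Definition~\ref{DefVarConv}, yielding variational strong convexity with modulus $\sigma$; the condition $0\in\widehat\partial f(\xb)$ comes from $\xb$ being the unconstrained minimizer $M_\gamma(0)$.

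I expect the main obstacle to be the localization bookkeeping that matches the \emph{global} argmin over the closed ball $\oB_\gamma(\xb)$, a nonconvex problem, with the \emph{local} inverse subgradient mapping: one must choose $\gamma$, the neighborhood $U\times V$, and the level $\rho$ so that the relevant minimizers or maximizers stay in the interior of the ball and in the region where $f$ agrees with its convex surrogate, while ruling out spurious far-away optimizers. Prox-regularity, through the uniform quadratic growth property, supplies exactly the quantitative control needed for this, and subdifferential continuity is what makes the $f$-truncation in Definitions~\ref{DefProxReg} and~\ref{DefVarConv} locally vacuous, so that the graphical arguments on $\partial f$ go through without tracking $f$-values separately.
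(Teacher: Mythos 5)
The paper does not actually prove Theorem~\ref{ThEquivTilt_VSC}; it is imported verbatim from \cite[Proposition 2.9]{KhMoPh23a} (going back to \cite{PolRo98}), so your proposal can only be measured against the cited literature and against the closely related argument the paper gives for Theorem~\ref{ThCharStrVarConv}(ii)$\Rightarrow$(iii). Your direction (i)$\Rightarrow$(ii) is sound: the $\sigma$-level quadratic growth from Theorem~\ref{ThFundEquiv}(ii) forces every minimizer of $f-\skalp{x^*,\cdot}$ over $\oB_\gamma(\xb)$ to satisfy $\norm{x-\xb}\le 2\norm{x^*}/\sigma$, hence to be interior and to land in the region where $\gph\partial f$ agrees with $\gph\partial\widehat f$, and single-valuedness and the $\sigma^{-1}$-Lipschitz bound then come from global $\sigma$-strong monotonicity of $\partial\widehat f$ (Remark~\ref{RemVarConv}(3)). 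Your direction (ii)$\Rightarrow$(i) via the local conjugate $\psi(x^*)=\sup\{\skalp{x^*,x}-f(x)\mv x\in\oB_\gamma(\xb)\}$ and its conjugate is exactly the device the paper itself deploys (with $g$ and $\psi=g^*$) in the proof of Theorem~\ref{ThCharStrVarConv}.

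The one step you assert without a workable mechanism is the inclusion $\gph\partial f\cap(U\times V)\subset\gph\partial\psi^{\,*}$ in the (ii)$\Rightarrow$(i) direction. The reverse inclusion is just the first-order optimality condition at $M_\gamma(x^*)$, but to show that \emph{every} nearby pair $(x,x^*)\in\gph\partial f$ arises as $(M_\gamma(x^*),x^*)$ you cannot appeal to ``the uniform quadratic growth property,'' because before (i) is established the only growth available is at level $-r$ from prox-regularity, which does not make $x$ a global minimizer of $f-\skalp{x^*,\cdot}$ over the ball. The argument that closes this gap is the maximality sandwich: $rI+\partial\psi^{\,*}$ is maximally monotone, hence locally maximal monotone around $(\xb,r\xb)$ by Proposition~\ref{PropLocMaxMon}; $rI+\Ta$ is monotone by prox-regularity (proof of \cite[Theorem 13.36]{RoWe98}); and since one graph is locally contained in the other, they must coincide on a smaller neighborhood. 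You name Proposition~\ref{PropLocMaxMon} in your overview but never invoke it where it is needed. Two smaller repairs: the claim that $\psi^{\,*}$ is $\sigma$-strongly convex should be localized (globally $\nabla\psi$ is only Lipschitz near $0$, so you only get $\sigma$-strong monotonicity of the localized $\partial\psi^{\,*}$ via a local co-coercivity/Baillon--Haddad estimate on a shrunken neighborhood, after which Theorem~\ref{ThFundEquiv}(iii)$\Rightarrow$(i) and Lemma~\ref{LemLowerFuncVarConv} finish the job); and the transfer of the modulus requires this quantitative co-coercivity step rather than mere Lipschitz continuity of $M_\gamma$.
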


\section{$f$-attentive derivatives of the subgradient mapping}
It is easy to see why the classical generalized derivatives are not suited for first-order analysis of the subdifferential mapping in the absence of subdifferential continuity. Consider the tangent cone
\[T_{\gph \partial f}(\xb,\xba)=\left\{(u,u^*)\mv \exists t_k\downarrow 0, (x_k,x_k^*)\longsetto{\gph \partial f}(\xb,\xba): (u,u^*)=\lim_{k\to\infty}\frac{(x_k,x_k^*)-(\xb,\xba)}{t_k}\right\}.\]
Here, ordinary convergence $(x_k,x_k^*)\longsetto{\gph \partial f}(\xb,\xba)$ appears, but in the definition of the subdifferential we use $f$-attentive convergence. Apparently we should not mix the two different types of convergence and therefore we should also require that $f(x_k)\to f(\xb)$ in the absence of subdifferential continuity.

In the sequel we denote $f$-attentive convergence in the graph of the subgradient mapping $\partial f$ by
\begin{equation*}
(x_k,x_k^*)\attconv{f}(\xb,\xba)\ :\Longleftrightarrow\ (x_k,x_k^*)\longsetto{\gph \partial f}(\xb,\xba) \mbox{ and }f(x_k)\to f(\xb).
\end{equation*}

The following definitions are obtained by  simply replacing in the definitions of tangents, regular/limiting normals to $\gph \partial f$ and generalized derivatives of $\partial f$ the ordinary convergence $(x,x^*)\longsetto{\gph \partial f} (\xb,\xba)$ by $(x,x^*)\attconv{f}(\xb,\xba)$.

\begin{definition}\label{DefAttDer}
  Let $f:\R^n\to\oR$ be a function and $(\xb,\xba)\in \gph \partial f$.
  \begin{enumerate}
    \item The {\em $f$-attentive tangent cone} to $\gph \partial f$ at $(\xb,\xba)$ is given by
    \begin{equation}\label{EqAttTanCone}
      T^f_{\gph \partial f}(\xb,\xba):=\left\{(u,u^*)\mv \exists t_k\downarrow 0,\ (x_k,x_k^*)\attconv{f}(\xb,\xba): (u,u^*)=\lim_{k\to\infty}\frac{(x_k,x_k^*)-(\xb,\xba)}{t_k}\right\}.
    \end{equation}
    \item The {\em $f$-attentive regular normal cone} to $\gph \partial f$ at $(\xb,\xba)$ is defined as $\widehat N^f_{\gph\partial f}(\xb,\xba):=\big( T^f_{\gph\partial f}(\xb,\xba)\big)^\circ$ and the {\em $f$-attentive limiting normal cone} to $\gph \partial f$ at $(\xb,\xba)$ amounts to
        \[N^f_{\gph\partial f}(\xb,\xba):=\Limsup_{(x,x^*)\attconv{f}(\xb,\xba)} \widehat N^f_{\gph\partial f}(x,x^*).\]
    \item The {\em $f$-attentive graphical derivative $D_f(\partial f)(\xb,\xba)$}, the {\em $f$-attentive regular  coderivative $\widehat D_f^*(\partial f)(\xb,\xba)$} and the {\em $f$-attentive limiting  coderivative $D_f^*(\partial f)(\xb,\xba)$} of $\gph \partial f$ at $(\xb,\xba)$ are the set-valued mappings from $\R^n$ to the subsets of $\R^n$ given by
        \begin{gather*}
          \gph D_f(\partial f)(\xb,\xba):=T^f_{\gph \partial f}(\xb,\xba),\\
          \gph \widehat D_f^*(\partial f)(\xb,\xba) :=\{(u,u^*)\mv (u^*,-u)\in \widehat N^f_{\gph\partial f}(x,x^*)\},\\
          \gph  D_f^*(\partial f)(\xb,\xba) :=\{(u,u^*)\mv (u^*,-u)\in N^f_{\gph\partial f}(x,x^*)\}.
        \end{gather*}
    \item Let $\OO_{\partial f}^f$ denote the collection of all points $(x,x^*)\in\gph \partial f$ such that $T^f_{\gph \partial f}(x,x^*)$ is an $n$-dimensional subspace. Then the {\em $f$-attentive SC derivative} of $\partial f$ at $(\xb,\xba)$ is defined by
        \[\Sp_f(\partial f)(\xb,\xba):=\{L\in\Z_{nn}\mv \exists (x_k,x_k^*)\subset \OO^f_{\partial f}, (x_k,x_k^*)\attconv{f}(\xb,\xba): d_{\Z}\big(L,T^f_{\gph \partial f}(x_k,x_k^*)\big)\to0\}.\]
        Finally the {\em adjoint $f$-attentive SC derivative} of $\partial f$ at $(\xb,\xba)$ is defined by
        \[\Sp_f^*(\partial f)(\xb,\xba):=\{L^*\mv L\in \Sp_f(\partial f)(\xb,\xba)\}.\]
  \end{enumerate}
\end{definition}
If $f$ is subdifferentially continuous at $\xb$ for the subgradient $\xba\in\partial f(\xb)$ it follows immediately from the definitions that
\begin{gather*}T_{\gph \partial f}^f(\xb,\xba)= T_{\gph \partial f}(\xb,\xba),\ \widehat N_{\gph \partial f}^f(\xb,\xba)= \widehat N_{\gph \partial f}(\xb,\xba),\\
 D_f(\partial f)(\xb,\xba)=D(\partial f)(\xb,\xba),\ \widehat D^*_f(\partial f)(\xb,\xba)=\widehat D^*(\partial f)(\xb,\xba).\end{gather*}
 If there exists a neighborhood $U\times V$ of $(\xb,\xba)$ such that for every $(x,x^*)\in\gph\partial f\cap (U\times V)$ the function $f$ is subdifferentially continuous at $x$ for $x^*$ then there also holds
 \begin{gather*}
   N_{\gph \partial f}^f(\xb,\xba)= N_{\gph \partial f}(\xb,\xba),\ D^*_f(\partial f)(\xb,\xba)=\widehat D^*(\partial f)(\xb,\xba),\\
   \Sp_f(\partial f)(\xb,\xba)=\Sp(\partial f)(\xb,\xba),\ \Sp^*_f(\partial f)(\xb,\xba)=\Sp^*(\partial f)(\xb,\xba).
 \end{gather*}
For the $f$-attentive objects there hold the following elementary calculus rules.
\begin{proposition}\label{PropCalc}
Let $f:\R^n\to\oR$ be a function, let $(\xb,\xba)\in \gph \partial f$, let $g:\R^n\to\R$ be twice continuously differentiable and consider the function $h:=f+g$. Then
\begin{align*}&T_{\gph \partial h}^h(\xb,\xba+\nabla g(\xb))=\bar A T_{\gph \partial f}^f(\xb,\xba),\\
 &\widehat N_{\gph \partial h}^h(\xb,\xba+\nabla g(\xb))= \bar A^{-T}\widehat N_{\gph \partial f}^f(\xb,\xba),\ N_{\gph \partial h}^h(\xb,\xba+\nabla g(\xb))= \bar A^{-T} N_{\gph \partial f}^f(\xb,\xba),\\
 &\gph D_h(\partial h)(\xb,\xba+\nabla g(\xb))=\bar A\,\gph D_f(\partial f)(\xb,\xba)\\
 &\gph \widehat D_h^*(\partial h)(\xb,\xba+\nabla g(\xb))=\bar A\,\gph \widehat D_f^*(\partial f)(\xb,\xba),\\
 &\gph  D_h^*(\partial h)(\xb,\xba+\nabla g(\xb))=\bar A\, \gph D_f^*(\partial f)(\xb,\xba)\\
 &\Sp_h(\partial h)(\xb,\xba+\nabla g(\xb))=\{\bar A L\mv L\in\Sp_f(\partial f)(\xb,\xba)\},\\
 &\Sp_h^*(\partial h)(\xb,\xba+\nabla g(\xb))=\{\bar A L^*\mv L\in\Sp_f(\partial f)(\xb,\xba),\}
 \end{align*}
 where
 \[\bar A=\begin{pmatrix}
   I&0\\\nabla^2g(\xb)&I
 \end{pmatrix}.\]
\end{proposition}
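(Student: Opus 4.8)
The plan is to realise $\gph\partial h$ as the image of $\gph\partial f$ under a single symplectic $C^1$-diffeomorphism and to push every generalized derivative through this map. Since $g$ is $C^1$, the elementary sum rule gives $\partial h(x)=\partial f(x)+\nabla g(x)$, so that
\[\gph\partial h=\Phi(\gph\partial f),\qquad \Phi(x,x^*):=(x,x^*+\nabla g(x)),\]
where $\Phi$ is a $C^1$-diffeomorphism of $\R^n\times\R^n$ whose derivative at a point $(x,x^*)$ is the nonsingular matrix $\bar A(x)=\begin{pmatrix}I&0\\\nabla^2 g(x)&I\end{pmatrix}$ with $\bar A(\xb)=\bar A$. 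Writing $\bar y^\ast:=\xba+\nabla g(\xb)$, the first thing I would record is the convergence equivalence
\[(x_k,x_k^*)\attconv{f}(\xb,\xba)\Longleftrightarrow (x_k,x_k^*+\nabla g(x_k))\attconv{h}(\xb,\bar y^\ast),\]
which follows from $x_k\to\xb$, continuity of $\nabla g$ (for the subgradient component) and continuity of $g$ (for the function values, via $h=f+g$). This equivalence is what lets $f$-attentive constructions for $\partial f$ be transported to $h$-attentive constructions for $\partial h$.

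Next I would establish the tangent-cone rule $T^h_{\gph\partial h}(\xb,\bar y^\ast)=\bar A\,T^f_{\gph\partial f}(\xb,\xba)$, which at once yields the graphical-derivative formula. Given $(u,u^*)\in T^f_{\gph\partial f}(\xb,\xba)$ realised by $t_k\downarrow0$ and $(x_k,x_k^*)\attconv{f}(\xb,\xba)$, the convergence equivalence makes $(x_k,x_k^*+\nabla g(x_k))$ an admissible $h$-attentive sequence, and the first-order expansion $\nabla g(x_k)-\nabla g(\xb)=\nabla^2 g(\xb)(x_k-\xb)+o(\norm{x_k-\xb})$ shows that the difference quotients converge to $(u,u^*+\nabla^2 g(\xb)u)=\bar A(u,u^*)$; the $o$-term is absorbed because $\norm{x_k-\xb}/t_k$ stays bounded. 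The reverse inclusion is the same argument applied to $f=h+(-g)$, whose associated matrix is $\bar A^{-1}$. The regular normal cone then transforms by polarity through $(\bar A K)^\circ=\bar A^{-T}K^\circ$ with $K=T^f_{\gph\partial f}(\xb,\xba)$, and the coderivative formulas follow by direct bookkeeping: the defining condition $(v^*,-v)\in\widehat N^h_{\gph\partial h}=\bar A^{-T}\widehat N^f_{\gph\partial f}$ rearranges, via $\bar A^T(v^*,-v)=(v^*-\nabla^2 g(\xb)v,-v)$, to $(v,v^*-\nabla^2 g(\xb)v)\in\gph\widehat D_f^*(\partial f)$, that is $(v,v^*)\in\bar A\,\gph\widehat D_f^*(\partial f)$.

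For the limiting objects I would apply the pointwise versions of these rules along sequences. For $N^h_{\gph\partial h}$ one takes $(v,v^*)$ in the outer limit, pulls each defining pair back through $\Phi$ using the convergence equivalence, and observes that at the base point $(x_k,x_k^*)$ the relevant matrix is $\bar A(x_k)$; since $g\in C^2$, $\nabla^2 g$ is continuous, so $\bar A(x_k)^{-T}\to\bar A^{-T}$ and $\bar A(x_k)^T(w_k,w_k^*)\to\bar A^T(v,v^*)$, giving $N^h_{\gph\partial h}=\bar A^{-T}N^f_{\gph\partial f}$ (the reverse by symmetry) and hence $D_h^*$ as before. For the SC derivatives I would use that $\bar A(x)$ is nonsingular, so it carries $n$-dimensional subspaces to $n$-dimensional subspaces; thus $\Phi(\OO^f_{\partial f})=\OO^h_{\partial h}$ and $T^h_{\gph\partial h}(\Phi(x,x^*))=\bar A(x)\,T^f_{\gph\partial f}(x,x^*)$ at every admissible point. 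Given $L\in\Sp_f(\partial f)(\xb,\xba)$ with $T^f_{\gph\partial f}(x_k,x_k^*)\to L$ in $d_\Z$, the joint continuity of $(A,L')\mapsto AL'$ on the compact space $\Z_{nn}$ together with $\bar A(x_k)\to\bar A$ yields $\bar A(x_k)\,T^f_{\gph\partial f}(x_k,x_k^*)\to\bar A L$, so $\bar A L\in\Sp_h(\partial h)(\xb,\bar y^\ast)$; the reverse inclusion extracts a convergent pullback subsequence using compactness of $\Z_{nn}$.

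The one genuinely algebraic point, and the step I expect to require the most care, is the adjoint SC derivative. Here I would prove $(\bar A L)^*=\bar A L^*$, which identifies $\Sp_h^*(\partial h)=\{(\bar A L)^*\mv L\in\Sp_f(\partial f)\}=\{\bar A L^*\mv L\in\Sp_f(\partial f)\}$. Writing the adjoint as $L^*=SL^\perp$ with the symplectic matrix $S=\begin{pmatrix}0&-I\\I&0\end{pmatrix}$ and using $(\bar A L)^\perp=\bar A^{-T}L^\perp$, the claim reduces to the operator identity $S\bar A^{-T}=\bar A S$, equivalently $\bar A S\bar A^T=S$, i.e. to the fact that $\bar A$ is symplectic with respect to $S$. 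This is a one-line block computation hinging on the symmetry of $\nabla^2 g(\xb)$, and it is the structural reason the adjoint operation commutes with the perturbation. The main difficulty throughout is not any single estimate but the need to have the pointwise transformation rules available at all nearby graph points, so that the limiting and SC constructions can be transported; this is exactly where the $C^2$-regularity of $g$ — rather than mere $C^1$ — enters, through the continuity of $\bar A(\cdot)$.
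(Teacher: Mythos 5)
Your proposal is correct and follows essentially the same route as the paper: transport everything through the map $(x,x^*)\mapsto(x,x^*+\nabla g(x))$ with derivative $\bar A(x)$, use the $f$-/$h$-attentive convergence equivalence, polarity for normal cones, continuity of $\nabla^2 g$ for the limiting objects, and the identity $S\bar A^{-T}S^{-1}=\bar A$ (your $\bar A S\bar A^T=S$) for the coderivatives and adjoint SC derivatives. The only cosmetic difference is that you verify the continuity of $(A,L')\mapsto AL'$ on $\Z_{nn}$ directly where the paper cites \cite[Lemma 3.1(iii)]{GfrOut22}.
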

\begin{proof}
  By elementary calculus rules, cf. \cite[Exercise 8.8]{RoWe98}, we have $\partial h(x)=\partial f(x)+\nabla g(x)$ for all $x$  where $f(x)$ is finite. Now consider a point $(x,x^*)\in\gph\partial f$. Then it is easy to see that for any sequence $(x_k,x_k^*)\in\R^n\times\R^n$ there holds $(x_k,x_k^*)\attconv{f}(x,x^*)$ if and only if $(x_k,x_k^*+\nabla g(x_k))\attconv{h}(x,x^*+\nabla g(x))$. Further, for every sequence $t_k\downarrow 0$ there holds $((x_k,x_k^*)-(x,x^*))/t_k\to(u,u^*)$ if and only if
  \[\lim_{k\to\infty}\frac{(x_k,x_k^*+\nabla g(x_k))-(x,x^*+\nabla g(x))}{t_k}=(u,u^*+\nabla^2g(x)u).\]
   Hence $T_{\gph\partial h}^h(x,x^*+\nabla g(x))=A(x)T_{\gph \partial f}^f(x,x^*)$ with $A(x):=\left(\begin{smallmatrix}I&0\\\nabla^2g(x)&I\end{smallmatrix}\right)$. The matrix $A(x)$  is apparently nonsingular and hence
   \begin{align*}\widehat N_{\gph\partial h}^h(x,x^*+\nabla g(x))&=\big(A(x)T_{\gph \partial f}^f(x,x^*)\big)^\circ=\big(A(x)\co T_{\gph \partial f}^f(x,x^*)\big)^\circ\\
   &=A(x)^{-T}\big(\co T_{\gph \partial f}^f(x,x^*)\big)^\circ=A(x)^{-T}\widehat N_{\gph \partial f}^f(x,x^*) \end{align*}
   by \cite[Corollary 16.3.2]{Ro70}. Since $\lim_{x\to\xb}A(x)$ equals  the nonsingular matrix $\bar A$, we may also infer that
   \begin{align*}N_{\gph\partial h}^h(\xb,\xba+\nabla g(\xb))&=\limsup_{(x,x^*+\nabla g(x))\attconv{h}(\xb,\xba+\nabla g(\xb))}\widehat N_{\gph\partial h}^h(x,x^*+\nabla g(x))\\
   &=\limsup_{(x,x^*)\attconv{f}(\xb,\xba)}A(x)^{-T}\widehat N_{\gph \partial f}^f(x,x^*)=\bar A^{-T}N_{\gph \partial f}^f(\xb,\xba).\end{align*}
   Taking into account the identities $\gph \widehat D^*_f(\partial f)(\xb,\xba)=S\widehat N_{\gph \partial f}^f(\xb,\xba)$, $\gph  D^*_f(\partial f)(\xb,\xba)=S N_{\gph \partial f}^f(\xb,\xba)$ and $S\bar A^{-T}S^{-1}=\bar A$, where $S:=\left(\begin{smallmatrix}0&-I\\I&0\end{smallmatrix}\right)$, we obtain that
   \[\gph \widehat D^*_h(\partial h)(\xb,\xba+\nabla g(\xb))=S\bar A^{-T}\widehat N_{\gph \partial f}^f(\xb,\xba)=S\bar A^{-T}S^{-1}\gph \widehat D^*_f(\partial f)(\xb,\xba)=\bar A\, \gph \widehat D^*_f(\partial f)(\xb,\xba)\]
   and, analogously, that $\gph D^*_h(\partial h)(\xb,\xba+\nabla g(\xb))=\bar A\, \gph D^*_f(\partial f)(\xb,\xba)$. Finally, there holds $\OO_{\partial h}^h=\{(x,x^*+\nabla g(x))\mv (x,x^*)\in\OO_{\partial f}^f\}$ and $\Sp_h(\partial h)(\xb,\xb^*+\nabla g(\xb))=\{\bar AL\mv L\in\Sp_f(\partial f)(\xb,\xba)\}$ follows from \cite[Lemma 3.1(iii)]{GfrOut22}. The formula for the adjoint SC derivative is a consequence of
   \begin{align*}\Sp_h^*(\partial h)(\xb,\xba+\nabla g(\xb))&=\{S(\bar A L)^\perp\mv L\in \Sp_f(\partial f)(\xb,\xba)\}=\{S\bar A^{-T} L^\perp\mv L\in \Sp_f(\partial f)(\xb,\xba)\}\\
   &=\{(S\bar A^{-T}S^{-1})(SL^\perp)\mv L\in \Sp_f(\partial f)(\xb,\xba)\}=\{\bar AL^*\mv L\in \Sp_f(\partial f)(\xb,\xba)\}.\end{align*}
\end{proof}

We will now show that for prox-regular functions the $f$-attentive objects defined in Definition \ref{DefAttDer} correspond to their ordinary counterparts applied to an $f$-attentive $\epsilon$-localization of the subdifferential.

\begin{definition}\label{DefEpsLocalization}Let $f:\R^n\to\oR$ be a function and $(\xb,\xba)\in \gph\partial  f$. Given $\epsilon>0$, the {\em $f$-attentive $\epsilon$-localization} of the subgradient mapping $\partial  f$ around $(\xb,\xba)$ is the multifunction $\Ta:\R^n\tto\R^n$ given by
\[\gph \Ta=\gph \partial_{f(\xb)+\epsilon}  f\cap (\B_\epsilon(\xb)\times \B_\epsilon(\xba)).\]
\end{definition}
We start our analysis with the following lemma inspired by \cite[Proposition 2.3]{PolRo96}. Recall that a  set $C\subset\R^n$ is said to be locally closed at a point $\xb$ (not necessarily in $C$) if
$C\cap \oB_\epsilon(\xb)$ is closed for some $\epsilon>0$.

\begin{lemma}\label{LemBasLemma}
  Assume that  the lsc function $f:\R^n\to\oR$ is prox-regular at $\xb\in\dom f $ for $\xba\in \partial f(\xb)$ with parameter values  $r\geq 0$ and $\epsilon>0$ and let $\Ta$ denote the $f$-attentive $\epsilon$-localization of $\partial f$ around $(\xb,\xba)$. Then the following statements hold.
  \begin{enumerate}
    \item[(i)] Let $(x_k,x_k^*)\in \gph \Ta$ be a sequence converging to some $(\hat x,\hat x^*)\in \B_\epsilon(\xb)\times \B_\epsilon(\xba)$. Then $f(\hat x)=\lim_k f(x_k)$ is finite and $(\hat x,\hat x^*)\in\gph \partial f$ and therefore $(x_k,x_k^*)\attconv{f}(\hat x,\hat x^*)$.
    \item[(ii)] Assume that $(x_k,x_k^*)\attconv{f}(\hat x,\hat x^*)\in \gph\Ta$. Then $(x_k,x_k^*)\in \gph\Ta$ for all $k$ sufficiently large.
    \item[(iii)] For every $(\hat x,\hat x^*)\in\gph \Ta$ and for every $\eta>0$ there exists some $\delta>0$ such that $f(x)<f(\hat x)+\eta$ for all $(x,x^*)\in\gph\Ta\cap(\B_\delta(\hat x)\times\B_\delta(\hat x^*))$.
    \item[(iv)] The set $\gph \Ta$ is locally closed  at each of its points.
  \end{enumerate}
\end{lemma}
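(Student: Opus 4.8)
The plan is to establish the four items in the listed order, because each later item feeds on the earlier ones. The engine is item (i): it forces $f$ to be continuous along convergent sequences inside $\gph\Ta$, and (iii) then upgrades this to a uniform bound that is exactly what is needed to control the strict level-set condition in (iv). Throughout I write $\rho:=f(\xb)+\epsilon$, so that $\gph\Ta=\gphtr{\rho}\partial f\cap(\B_\epsilon(\xb)\times\B_\epsilon(\xba))$.

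For (i), I would first record that lower semicontinuity gives $\liminf_k f(x_k)\geq f(\hat x)$. For the reverse bound I would invoke the prox-regularity inequality with $(x,x^*)=(x_k,x_k^*)$ — which lies in $\gphtr{\rho}\partial f\cap(\B_\epsilon(\xb)\times\B_\epsilon(\xba))$ because $(x_k,x_k^*)\in\gph\Ta$ — and with $x'=\hat x\in\B_\epsilon(\xb)$, obtaining
\[f(\hat x)\geq f(x_k)+\skalp{x_k^*,\hat x-x_k}-\frac r2\norm{\hat x-x_k}^2.\]
Since $x_k\to\hat x$ while $x_k^*\to\hat x^*$ stays bounded, the last two terms vanish in the limit, so $\limsup_k f(x_k)\leq f(\hat x)$; together with the lsc bound this yields $f(x_k)\to f(\hat x)$. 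Finiteness follows from $f(x_k)<\rho$, giving $f(\hat x)\leq\rho<\infty$, and from $f$ being $\oR$-valued. Finally, $x_k\attto{f}\hat x$ with $x_k^*\in\partial f(x_k)\to\hat x^*$ forces $\hat x^*\in\partial f(\hat x)$ by the closedness of $\gph\partial f$ under $f$-attentive convergence, which I would justify by a diagonal argument against the definition of the limiting subdifferential (each $x_k^*\in\partial f(x_k)$ is a limit of regular subgradients $f$-attentively converging to $x_k$). The asserted convergence $(x_k,x_k^*)\attconv{f}(\hat x,\hat x^*)$ is then immediate from ordinary convergence in $\gph\partial f$ plus $f(x_k)\to f(\hat x)$.

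Item (ii) is elementary: since $\hat x\in\B_\epsilon(\xb)$ and $\hat x^*\in\B_\epsilon(\xba)$ with these balls open and $f(\hat x)<\rho$, the conditions $x_k\in\B_\epsilon(\xb)$, $x_k^*\in\B_\epsilon(\xba)$ and $f(x_k)<\rho$ all persist for large $k$, because $(x_k,x_k^*)\attconv{f}(\hat x,\hat x^*)$ supplies both the ordinary convergence and $f(x_k)\to f(\hat x)$, while $(x_k,x_k^*)\in\gph\partial f$ is given. For (iii) I would argue by contradiction: a sequence $(x_k,x_k^*)\in\gph\Ta$ converging to $(\hat x,\hat x^*)\in\gph\Ta\subset\B_\epsilon(\xb)\times\B_\epsilon(\xba)$ with $f(x_k)\geq f(\hat x)+\eta$ would contradict the continuity $f(x_k)\to f(\hat x)$ furnished by (i), since the limit point lies in the open product and (i) applies.

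The one genuinely delicate point is (iv), because $\gph\Ta$ is cut out by the strict inequality $f(x)<\rho$, which need not survive passage to a limit; this is where (iii) earns its keep. Given $(\hat x,\hat x^*)\in\gph\Ta$ I would first choose $\delta$ so small that $\oB_\delta(\hat x,\hat x^*)\subset\B_\epsilon(\xb)\times\B_\epsilon(\xba)$, and then shrink it further using (iii) with $\eta=\frac12(\rho-f(\hat x))>0$, so that $f(x)<\frac12(f(\hat x)+\rho)<\rho$ throughout $\gph\Ta\cap(\B_\delta(\hat x)\times\B_\delta(\hat x^*))$. For any sequence in $\gph\Ta\cap\oB_\delta(\hat x,\hat x^*)$ converging to some $(\tilde x,\tilde x^*)$, item (i) gives $(\tilde x,\tilde x^*)\in\gph\partial f$ with $f(\tilde x)=\lim_k f(x_k)\leq\frac12(f(\hat x)+\rho)<\rho$, while containment of the closed $\delta$-ball in the open product keeps $\tilde x\in\B_\epsilon(\xb)$ and $\tilde x^*\in\B_\epsilon(\xba)$. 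Hence $(\tilde x,\tilde x^*)\in\gph\Ta\cap\oB_\delta(\hat x,\hat x^*)$, so this set is closed and $\gph\Ta$ is locally closed at $(\hat x,\hat x^*)$. The main obstacle throughout is thus the bookkeeping around the strict level-set bound, resolved precisely by the uniform-in-a-neighborhood estimate of (iii).
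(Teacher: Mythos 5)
Your proof is correct and follows essentially the same route as the paper's: the prox-regularity inequality combined with lower semicontinuity for (i), persistence of open conditions for (ii), contradiction via (i) for (iii), and then (i) together with the uniform bound from (iii) to tame the strict level-set inequality in (iv). The only cosmetic slip is in (iv), where the bound furnished by (iii) is stated on the open product $\B_\delta(\hat x)\times\B_\delta(\hat x^*)$ while the convergent sequence is taken in the closed ball $\oB_\delta(\hat x,\hat x^*)$; working with $\oB_{\delta/2}(\hat x)\times\oB_{\delta/2}(\hat x^*)$ instead removes the mismatch.
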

\begin{proof}
ad (i): From the definition of prox-regularity we deduce that $f(\hat x)\geq f(x_k)+\skalp{x_k^*,\hat x-x_k}-\frac r2\norm{\hat x-x_k}^2$ and therefore $f(\hat x)\geq \limsup_k f(x_k)$. By lower semicontinuity of $f$ we also have $f(\hat x)\leq \liminf_k f(x_k)$ and $f(x_k)\to f(\hat x)\leq f(\xb)+\epsilon$ follows. The inclusion $\hat x^*\in\partial f(\hat x)$ now follows from the definition of the limiting subdifferential.

ad (ii): Since $(x_k,x_k^*)$ converges in $\gph\partial f$ to $(\hat x,\hat x^*)$ belonging to the open set $\B_\epsilon(\xb)\times \B_\epsilon(\xba)$, we have $(x_k,x_k^*)\in \gph \partial f\cap (\B_\epsilon(\xb)\times \B_\epsilon(\xba))$ for all $k$ sufficiently large. Further, since $f(x_k)\to f(\hat x)<f(\xb)+\epsilon$, we also have $f(x_k)<f(\xb)+\epsilon$ for all $k$ sufficiently large and  consequently $(x_k,x_k^*)\in\gph \Ta$ for these $k$.

ad (iii): By contraposition. Assume on the contrary that there is  $(\hat x,\hat x^*)\in\gph \Ta$ and  $\eta>0$ such that for every natural number $k$ there is some $(x_k,x_k^*)\in \gph \Ta\cap(\B_{1/k}(\hat x)\times\B_{1/k}(\hat x^*))$ with $f(x_k)\geq f(\hat x)+\eta$ and therefore $\liminf_k f(x_k)\geq f(\hat x)+\eta$. However, from (i) we conclude $f(\hat x)=\lim_k f(x_k)$, a contradiction.

ad (iv): Consider $(\hat x,\hat x^*)\in \gph \Ta$ and choose $\eta>0$ with $f(\hat x)+\eta<f(\xb)+\epsilon$ and $\delta>0$ small enough such that the compact neighborhood $C:=\oB_\delta(\hat x)\times \oB_\delta(\hat x^*)$ of $(\hat x,\hat x^*)$ is contained in $\B_\epsilon(\xb)\times \B_\epsilon(\xba)$ and $f(x)<f(\xb)+\eta$ for all $(x,x^*)\in\gph \Ta\cap C$. We now show that $\gph \Ta\cap C$ is closed.
  Consider a sequence $(x_k,x_k^*)\in \gph \Ta\cap C$ converging to some $(\tilde x,\tilde x^*)$. Clearly $(\tilde x,\tilde x^*)\in C$ and by (i) we have $f(\tilde x)=\lim_k f(x_k)\leq f(\hat x)+\eta<f(\xb)+\epsilon$ and $\tilde x^*\in \partial f(\tilde x)$ implying $(\tilde x,\tilde x^*)\in \gph \Ta\cap C$. Hence $\gph \Ta\cap C$ is closed and the lemma  is proved.
\end{proof}

\begin{proposition}\label{PropEpsLoc}Assume that the lsc function $f:\R^n\to\oR$ is prox-regular at $\xb$ for $\xba\in\partial f(\xb)$ with parameter values $r\geq0$ and $\epsilon>0$ and let $\Ta$ denote the $f$-attentive $\epsilon$-localization of $\partial f$ around $(\xb,\xba)$. Then for every $(\tilde x,\tilde x^*)\in\gph \Ta$ the following relations hold:
\begin{enumerate}
  \item[(i)]$T^f_{\gph \partial f}(\tilde x,\tilde x^*)=T_{\gph \Ta}(\tilde x,\tilde x^*)$, $\widehat N^f_{\gph \partial f}(\tilde x,\tilde x^*)=\widehat N_{\gph \Ta}(\tilde x,\tilde x^*)$, $N^f_{\gph \partial f}(\tilde x,\tilde x^*)=N_{\gph \Ta}(\tilde x,\tilde x^*)$.
  \item[(ii)]$D_f(\partial f)(\tilde x,\tilde x^*)=D\Ta(\tilde x,\tilde x^*)$, $\widehat D_f^*(\partial f)(\tilde x,\tilde x^*)=\widehat D^*\Ta(\tilde x,\tilde x^*)$, $D_f^*(\partial f)(\tilde x,\tilde x^*)=D^*\Ta(\tilde x,\tilde x^*)$.
  \item[(iii)]\[\emptyset\not=\Sp_f(\partial f)(\tilde x,\tilde x^*)=\Sp\Ta(\tilde x,\tilde x^*)=\Sp_f^*(\partial f)(\tilde x,\tilde x^*)=\Sp^*\Ta(\tilde x,\tilde x^*)\] and for every $L\in \Sp_f(\partial f)(\tilde x,\tilde x^*)$ there holds $L=L^*$ and there exist unique symmetric $n\times n$ matrices $P,W$ satisfying the properties (i)--(iii) of Proposition \ref{PropSCDProxRegSubDiff}.
\end{enumerate}
\end{proposition}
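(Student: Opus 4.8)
The plan is to read (i) off directly from Lemma~\ref{LemBasLemma}, deduce (ii) from (i) through the defining flip operations, and reduce the structural SC statements in (iii) to the subdifferentially continuous case of Proposition~\ref{PropSCDProxRegSubDiff} by passing to a surrogate function.

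First I would prove the tangent cone identity in (i). For $T_{\gph\Ta}(\tilde x,\tilde x^*)\subseteq T^f_{\gph\partial f}(\tilde x,\tilde x^*)$, a generating sequence $(x_k,x_k^*)\in\gph\Ta$ converging to $(\tilde x,\tilde x^*)$ lies in the open box $\B_\epsilon(\xb)\times\B_\epsilon(\xba)$, so Lemma~\ref{LemBasLemma}(i) upgrades the convergence to $f$-attentive convergence and the difference quotients are admissible for the $f$-attentive cone. For the reverse inclusion, a generating sequence $(x_k,x_k^*)\attconv{f}(\tilde x,\tilde x^*)$ eventually lands in $\gph\Ta$ by Lemma~\ref{LemBasLemma}(ii). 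The regular normal cone identity then follows by polarity. For the limiting cones I would match the two $\Limsup$'s: on the $f$-attentive side Lemma~\ref{LemBasLemma}(ii) puts the base points into $\gph\Ta$, on the $\Ta$ side Lemma~\ref{LemBasLemma}(i) makes the base-point convergence $f$-attentive, so both outer limits run over the same base-point sequences; at each such point the already-established regular-normal-cone equality (valid at every point of $\gph\Ta$) makes the summands agree, whence $N^f_{\gph\partial f}=N_{\gph\Ta}$. Part (ii) is then immediate, since $D_f(\partial f),\widehat D^*_f(\partial f),D^*_f(\partial f)$ and $D\Ta,\widehat D^*\Ta,D^*\Ta$ are obtained from the identical tangent/normal cones by the same reflection $(u^*,-u)$.

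For (iii) I would first note that by (i) the cone $T^f_{\gph\partial f}$ is an $n$-dimensional subspace exactly where $T_{\gph\Ta}$ is, so $\OO_{\partial f}^f\cap\gph\Ta=\OO_{\Ta}$; combined with Lemma~\ref{LemBasLemma}(i),(ii) the sequences defining $\Sp_f(\partial f)(\tilde x,\tilde x^*)$ and $\Sp\Ta(\tilde x,\tilde x^*)$ coincide and yield the same limit subspaces, giving $\Sp_f(\partial f)=\Sp\Ta$ and, after adjoining, $\Sp_f^*(\partial f)=\Sp^*\Ta$. To obtain the remaining assertions I would pass to a subdifferentially continuous surrogate. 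Fixing $(\tilde x,\tilde x^*)\in\gph\Ta$, the prox-regularity estimate holds uniformly for all subgradient pairs in $\B_\epsilon(\xb)\times\B_\epsilon(\xba)$ below the truncation level, so $f$ is prox-regular at $\tilde x$ for $\tilde x^*$ with the same $r$; by Remark~\ref{RemVarConv}(1) it is then variationally $(-r)$-convex there, and Remark~\ref{RemVarConv}(3) supplies an lsc function $\widehat f$ with $\widehat f+\frac r2\norm{\cdot}^2$ convex---hence $\widehat f$ prox-regular and subdifferentially continuous---whose subgradient graph coincides with $\gph\Ta$ on a neighborhood of $(\tilde x,\tilde x^*)$. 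Since the SC derivative depends only on the graph near the base point, $\Sp\Ta(\tilde x,\tilde x^*)=\Sp(\partial\widehat f)(\tilde x,\tilde x^*)$, and Proposition~\ref{PropSCDProxRegSubDiff} applied to $\widehat f$ delivers nonemptiness, the self-adjointness $\Sp(\partial\widehat f)=\Sp^*(\partial\widehat f)$ with $L=L^*$, and the unique $(P,W)$-representation; all of this transfers verbatim.

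The routine work is the inclusion-chasing in (i) and the bookkeeping of which truncation level and neighborhood to use. The genuine obstacle is the structural part of (iii): producing the self-adjointness of the limiting tangent spaces and the $(P,W)$-form without assuming subdifferential continuity of $f$. The idea that makes it work is that the $f$-attentive localization $\Ta$ has a locally closed graph (Lemma~\ref{LemBasLemma}(iv)) which coincides locally with the subgradient graph of the genuinely nice $\widehat f$; the delicate point is verifying that the truncation level $\tilde\rho$ of $\widehat f$ and the level $f(\xb)+\epsilon$ defining $\Ta$ agree on a small enough neighborhood, for which Lemma~\ref{LemBasLemma}(iii) is exactly the tool, so that $\gph\partial\widehat f$ and $\gph\Ta$ really do agree as sets near $(\tilde x,\tilde x^*)$.
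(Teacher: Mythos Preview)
Your proposal is correct and follows essentially the same approach as the paper: parts (i) and (ii) are handled identically via Lemma~\ref{LemBasLemma}, and for (iii) both you and the paper reduce to Proposition~\ref{PropSCDProxRegSubDiff} by passing to the surrogate $\widehat f$ furnished by variational $(-r)$-convexity at $(\tilde x,\tilde x^*)$. The only cosmetic difference is that the paper introduces an intermediate $\tilde\epsilon$-localization $\tilde\T_{\tilde\epsilon}^f$ around $(\tilde x,\tilde x^*)$ to identify $\gph\partial\widehat f$ locally, whereas you match it directly with $\gph\Ta$ using Lemma~\ref{LemBasLemma}(iii) for the truncation-level bookkeeping.
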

\begin{proof}
  ad (i): Consider $(u,u^*)\in T^f_{\gph \partial f}(\tilde x,\tilde x^*)$ together with sequences $t_k\downarrow 0$ und $(u_k,u_k^*)\to(u,u^*)$ with $(x_k,x_k^*):=(\tilde x+t_ku_k,\tilde x^*+t_ku_k^*)\attconv{f}(\tilde x,\tilde x^*)\in\gph \Ta$. By Lemma \ref{LemBasLemma} we have $(x_k,x_k^*)\in \gph \Ta$  for all $k$ sufficiently large and $(u,u^*)\in T_{\gph \Ta}(\tilde x,\tilde x^*)$ follows. This proves   $T^f_{\gph \partial f}(\tilde x,\tilde x^*)\subset T_{\gph \Ta}(\tilde x,\tilde x^*)$. In order to prove the reverse inclusion, consider $(u,u^*)\in T_{\gph \Ta}(\tilde x,\tilde x^*)$ together with sequences $t_k\downarrow 0$ and $(u_k,u_k^*)\to(u,u^*)$ with $(x_k,x_k^*):=(\tilde x+t_ku_k,\tilde x_k^*+t_ku_k^*)\longsetto{\gph\Ta}(\tilde x,\tilde x^*)$. From Lemma \ref{LemBasLemma} we conclude that $(x_k,x_k^*)\attconv{f}(\tilde x,\tilde x^*)$. Hence, $(u,u^*)\in T^f_{\gph \partial f}(\tilde x,\tilde x^*)$ and the equality $T^f_{\gph \partial f}(\tilde x,\tilde x^*)=T_{\gph \Ta}(\tilde x,\tilde x^*)$ is established. From this equality the second equality in (i) immediately follows from the definition.

  Now let us prove the third equality in (i). Consider $(u^*,u)\in N^f_{\gph \partial f}(\tilde x,\tilde x^*)$ together with sequences $(x_k,x_k^*)\attconv{f}(\tilde x,\tilde x^*)$ and $(u_k^*,u_k)\to(u^*,u)$ with $(u_k^*,u_k)\in \widehat N^f_{\gph \partial f}(x_k,x_k^*)$ for all $k$. Again by Lemma \ref{LemBasLemma} we have $(x_k,x_k^*)\in\gph \Ta$ for all $k$ sufficiently large implying, together with the just established equality for the regular normal cones, that $(u_k^*,u_k)\in \widehat N_{\gph \Ta}(x_k,x_k^*)$ for these $k$. Hence $(u^*,u)\in N_{\gph \Ta}(\tilde x,\tilde x^*)$ yielding $N^f_{\gph \partial f}(\tilde x,\tilde x^*)\subset N_{\gph \Ta}(\tilde x,\tilde x^*)$. Conversely, consider $(u^*,u)\in N_{\gph \Ta}(\tilde x,\tilde x^*)$ together with sequences $(x_k,x_k^*)\longsetto{\gph \Ta}(\tilde x,\tilde x^*)$ and $(u_k^*,u_k)\to(u^*,u)$ with $(u_k^*,u_k)\in \widehat N_{\gph \Ta}(x_k,x_k^*)$ for all $k$. Since $(x_k,x_k^*)\attconv{f} (\tilde x,\tilde x^*)$ by Lemma \ref{LemBasLemma}, we obtain $(u^*,u)\in N^f_{\gph \partial f}(\tilde x,\tilde x^*)$ showing the third equality in (i).

  ad (ii): This is an immediate consequence of (i).

  ad (iii): Consider $L\in \Sp_f(\partial f)(\tilde x,\tilde x^*)$ together with a sequence $(x_k,x_k^*)\in\OO_{\partial f}^f$ satisfying $(x_k,x_k^*)\attconv{f}(\tilde x,\tilde x^*)$ and
  $\lim_k d_{\Z}(L,T_{\gph \partial f}^f(x_k,x_k^*))=0$. By Lemma \ref{LemBasLemma} we have $(x_k,x_k^*)\in\gph \Ta$ for all $k$ sufficiently large and, since $T_{\gph \partial f}^f(x_k,x_k^*)=T_{\gph\Ta}(x_k,x_k^*)$ is a subspace of dimension $n$, $L \in \Sp\Ta(\tilde x,\tilde x^*)$ follows. Conversely, if $L \in \Sp\Ta(\tilde x,\tilde x^*)$ consider a sequence $(x_k,x_k^*)\longsetto{\OO_{\Ta}}(\tilde x,\tilde x^*)$ with $\lim_k d_{\Z}(L,T_{\gph\Ta}(x_k,x_k^*))=0$. Since we have $(x_k,x_k^*)\attconv{f}(\tilde x,\tilde x^*)$ by Lemma \ref{LemBasLemma} and $T_{\gph \partial f}^f(x_k,x_k^*)=T_{\gph\Ta}(x_k,x_k^*)$ is a subspace of dimension $n$, we conclude that $(x_k,x_k^*)\longsetto{\OO_{\partial f}^f}(\tilde x,\tilde x^*)$ and $L\in \Sp_f(\partial f)(\tilde x,\tilde x^*)$ follows. Thus we have shown $\Sp_f(\partial f)(\tilde x,\tilde x^*)=\Sp\Ta(\tilde x,\tilde x^*)$ and the equation
  $\Sp_f^*(\partial f)(\tilde x,\tilde x^*)=\Sp^*\Ta(\tilde x,\tilde x^*)$ follows from the definition. Next choose $\epsilon'>0$ small enough such that $f(\tilde x)+\epsilon'\leq f(\xb)+\epsilon$ and $\B_{\epsilon'}(\tilde x)\times \B_{\epsilon'}(\tilde x^*)\subset \B_\epsilon(\xb)\times\B_\epsilon(\xba)$. Then  $f$ is prox-regular at $\tilde x$ for $\tilde x^*$ with parameter values $r$ and any $\tilde\epsilon\in(0,\epsilon']$ and, by Remark \ref{RemVarConv}(i), there is an lsc function $\widehat f$ together with an open convex neighborhood $U\times V$ of $(\tilde x,\tilde x^*)$ and some real $\rho>f(\tilde x)$ such that $\widehat f\leq f$ on $U$,  $\widehat f+\frac r2\norm{\cdot}^2$ is convex on $U$ and
  \[\gph\partial_\rho f\cap(U\times V)= \gph\partial\widehat f\cap(U\times V)\quad \mbox{and $f(x)=\widehat f(x)$ at the common elements $(x,x^*)$.}\]
  Next choose $\tilde\epsilon\in(0,\epsilon']$ with $\tilde\rho:=f(\tilde x)+\tilde\epsilon\leq\rho$. By applying our just proven result with the $f$-attentive $\tilde\epsilon$-localization $\tilde \T_{\tilde \epsilon}^f$ around $(\tilde x,\tilde x^*)$, we  obtain that $\Sp_f(\partial f)(\tilde x,\tilde x^*)= \Sp \tilde\T_{\tilde\epsilon}^f(\tilde x,\tilde x^*)$ and $\Sp^*_f(\partial f)(\tilde x,\tilde x^*)= \Sp^* \tilde\T_{\tilde\epsilon}^f(\tilde x,\tilde x^*)$. Further,  $\widehat f$ is prox-regular and subdifferentially continuous at $\tilde x$ for $\tilde x^*$ and we may therefore find a convex neighborhood $\tilde U\times\tilde V\subset (U\times V)\cap (\B_{\tilde\epsilon}(\tilde x)\times \B_{\tilde\epsilon}(\tilde x^*))$ such that $f(x)=\widehat f(x)<\tilde\rho$ $\forall (x,x^*)\in\gph\partial\widehat f\cap(\tilde U\times \tilde V)$. Thus
  \[\gph \partial \widehat f\cap(\tilde U\times\tilde V)= \gph \partial_{\tilde\rho} f\cap(\tilde U\times\tilde V)=
  \gph \tilde\T_{\tilde \epsilon}^f\cap (\tilde U\times\tilde V)\]
  and, together with Proposition \ref{PropSCDProxRegSubDiff}, we conclude that
  \begin{align*}\Sp_f (\partial f)(\tilde x, \tilde x^*)&=\Sp \tilde\T_{\tilde \epsilon}^f(\tilde x,\tilde x^*)=\Sp(\partial\widehat f)(\tilde x,\tilde x^*)\\
  &=\Sp^*(\partial\widehat f)(\tilde x,\tilde x^*)= \Sp^* \tilde\T_{\tilde \epsilon}^f(\tilde x,\tilde x^*)= \Sp_f^* (\partial f)(\tilde x, \tilde x^*)\not=\emptyset.\end{align*}
 Further, for every subspace $L\in\Sp_f (\partial f)(\tilde x, \tilde x^*)=\Sp(\partial\widehat f)(\tilde x,\tilde x^*)$ the property $L=L^*$ and the claimed basis representation for $L$ follows also from Proposition \ref{PropSCDProxRegSubDiff}.
  \end{proof}
\begin{remark}Although we could equivalently work with the classical generalized derivatives for the $f$-attentive $\epsilon$-localizations of $\partial f$ instead of the $f$-attentive ones , the use of the latter has the advantage that it is independent of $\epsilon$.
\end{remark}

In what follows, we denote by $\M^f_{P,W}\partial f(x,x^*)$ the collection of all pairs of symmetric $n\times n$ matrices $(P,W)$ satisfying the properties (ii) and (iii) of Proposition \ref{PropSCDProxRegSubDiff} and $\rge(P,W)\in \Sp_f(\partial f)(x,x^*)$.

Note that for a twice continuously differentiable function $g:\R^n\to\R$ we have by virtue of Proposition \ref{PropCalc} that
\[\Sp_{f+g}(\partial(f+g))(\xb,\xba+\nabla g(\xb))=\{\rge(P,\nabla\partial^2g(\xb)P+W)\mv (P,W)\in\M_{P,W}^f\partial f(\xb,\xba)\}.\]
For every $(P,W)\in \M_{P,W}^f\partial f(\xb,\xba)$ and every $p\in\R^n$ there holds
\[\big(Pp,(\nabla\partial^2g(\xb)P+W)p\big)=\big(P\bar p,(P\nabla^2g(\xb)P+PWP+(I-P))\bar p\big)\]
 with $\bar p=p+(I-P)\nabla^2g(\xb)Pp$. Thus $\rge(P,\nabla\partial^2g(\xb)P+W)=\rge(P,P\nabla\partial^2g(\xb)P+PWP +(I-P))$ and we obtain that
\begin{equation}\label{EqSumRule}\M_{P,W}^{f+g}\partial(f+g)(\xb,\xba+\nabla g(\xb))=\{(P,P\nabla^2g(\xb)P+W)\mv (P,W)\in\M_{P,W}^f\partial f(\xb,\xba)\}.\end{equation}

At the end of this section we state the following result which confirms that under prox-regularity and subdifferential continuity the $f$-attentive generalized derivatives coincide locally with their ordinary counterparts.
\begin{lemma}
  Assume that the lsc function $f:\R^n\to\oR$ is prox-regular and subdifferentially continuous at $\xb\in\dom f$ for the subgradient $\xba\in\partial f(\xb)$. Then there is a neighborhood $U\times V$ of $(\xb,\xba)$  such that for every $(x,x^*)\in \gph\partial f\cap (U\times V)$ the function $f$ is prox-regular and subdifferentially continuous at $x$ for $x^*$ and
  \begin{gather*}T_{\gph \partial f}^f(x,x^*)= T_{\gph \partial f}(x,x^*),\ \widehat N_{\gph \partial f}^f(x,x^*)= \widehat N_{\gph \partial f}(x,x^*),\ N_{\gph \partial f}^f(x,x^*)= N_{\gph \partial f}(x,x^*),\\
 D_f(\partial f)(x,x^*)=D(\partial f)(x,x^*),\ \widehat D^*_f(\partial f)(x,x^*)=\widehat D^*(\partial f)(x,x^*),\ D^*_f(\partial f)(x,x^*)=\widehat D^*(\partial f)(x,x^*),\\
   \Sp_f(\partial f)(x,x^*)=\Sp(\partial f)(x,x^*),\ \Sp^*_f(\partial f)(x,x^*)=\Sp^*(\partial f)(x,x^*).
 \end{gather*}
\end{lemma}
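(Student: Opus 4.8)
The plan is to reduce everything to the single-point observations recorded right after Definition \ref{DefAttDer} and to Proposition \ref{PropEpsLoc}, the only genuine work being to promote subdifferential continuity from the reference pair $(\xb,\xba)$ to a whole neighborhood of pairs in $\gph\partial f$. So first I would fix prox-regularity parameters $r\geq0$ and $\epsilon>0$ as in Definition \ref{DefProxReg} and let $\Ta$ denote the $f$-attentive $\epsilon$-localization of $\partial f$ around $(\xb,\xba)$.

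The key step is to use subdifferential continuity at $(\xb,\xba)$ to produce an open neighborhood $U\times V\subset\B_\epsilon(\xb)\times\B_\epsilon(\xba)$ on which $\gph\partial f$ coincides with $\gph\Ta$. Indeed, if no neighborhood enforced $f(x)<f(\xb)+\epsilon$ along $\gph\partial f$, then shrinking neighborhoods to $(\xb,\xba)$ would yield pairs $(x_k,x_k^*)\longsetto{\gph\partial f}(\xb,\xba)$ with $f(x_k)\geq f(\xb)+\epsilon$, contradicting subdifferential continuity at $(\xb,\xba)$. Hence there is an open $U\times V\subset\B_\epsilon(\xb)\times\B_\epsilon(\xba)$ with $f(x)<f(\xb)+\epsilon$ for every $(x,x^*)\in\gph\partial f\cap(U\times V)$, whence $\gph\partial f\cap(U\times V)=\gph\Ta\cap(U\times V)$.

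Next I would verify prox-regularity and subdifferential continuity at each $(x,x^*)\in\gph\partial f\cap(U\times V)$. Prox-regularity at $(x,x^*)$ is already obtained in the course of the proof of Proposition \ref{PropEpsLoc}, where it is shown for every point of $\gph\Ta$, and here $\gph\partial f\cap(U\times V)\subset\gph\Ta$. For subdifferential continuity, I would take any sequence $(x_k,x_k^*)\longsetto{\gph\partial f}(x,x^*)$; since $U\times V$ is open, eventually $(x_k,x_k^*)\in\gph\partial f\cap(U\times V)=\gph\Ta\cap(U\times V)$, so Lemma \ref{LemBasLemma}(i) applies and gives $f(x_k)\to f(x)$. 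Thus $f$ is subdifferentially continuous at $x$ for $x^*$.

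Finally, the asserted coincidences of the $f$-attentive objects with their classical counterparts at each such $(x,x^*)$ follow from the remarks recorded after Definition \ref{DefAttDer}: subdifferential continuity at $(x,x^*)$ alone yields the equalities for the tangent cone, the regular normal cone, the graphical derivative and the regular coderivative, whereas subdifferential continuity throughout the neighborhood $U\times V$ of $(x,x^*)$ — which the previous paragraph establishes pointwise on all of $\gph\partial f\cap(U\times V)$ — yields the equalities for the limiting normal cone, the limiting coderivative and the (adjoint) SC derivatives. I expect the only real obstacle to be the propagation of subdifferential continuity in the third paragraph; everything there hinges on the identity $\gph\partial f\cap(U\times V)=\gph\Ta\cap(U\times V)$, which converts ordinary $\gph\partial f$-convergence into $f$-attentive convergence and thereby lets Lemma \ref{LemBasLemma}(i) carry the argument.
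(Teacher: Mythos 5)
Your proposal is correct and follows essentially the same route as the paper's proof: use subdifferential continuity at $(\xb,\xba)$ to obtain a neighborhood on which $\gph\partial f$ agrees with $\gph\Ta$, transfer prox-regularity to the nearby pairs, deduce subdifferential continuity there from Lemma \ref{LemBasLemma}(i), and conclude via the discussion following Definition \ref{DefAttDer}. No gaps.
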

\begin{proof}
  Let $r\geq 0$ and $\epsilon>0$ denote the parameter values for prox-regularity of $f$ at $\xb$ for $\xba$. By subdifferential continuity we can find some $\epsilon'\in(0,\epsilon]$ such that for every $(x,x^*)\in \gph\partial f\cap(\B_{\epsilon'}(\xb)\times \B_{\epsilon'}(\xba))$ one has $f(x)<f(\xb)+\epsilon$. Consider $(x,x^*)\in \gph\partial f\cap(\B_{\epsilon'}(\xb)\times \B_{\epsilon'}(\xba))$. It is easy to see that $f$ is prox-regular at $x$ for $x^*$ with parameters $r$ and any $\tilde\epsilon>0$ chosen small enough so that $\B_{\tilde\epsilon}(x)\times \B_{\tilde\epsilon}(x^*)\subset \B_{\epsilon}(\xb)\times \B_{\epsilon}(\xba)$ and $f(x)+\tilde\epsilon<f(\xb)+\epsilon$. In order to verify subdifferential continuity, consider a sequence $(x_k,x_k^*)\longsetto{\gph\partial f}(x,x^*)$. For $k$ sufficiently large we have $(x_k,x_k^*)\in \gph\partial f\cap(\B_{\epsilon'}(\xb)\times \B_{\epsilon'}(\xba))\subset \gph\Ta$ and $f(x_k)\to f(x)$ follows from Lemma \ref{LemBasLemma}(i). This verifies subdifferential continuity of $f$ at $x$ for $x^*$ and the remaining assertion is a consequence of the discussion following Definition \ref{DefAttDer}.
\end{proof}

\section{On neighborhood-based second-order characterizations of  variational $s$-convexity}
We start our analysis with two preparatory lemmata.
\begin{lemma}\label{LemLowerFuncVarConv}
  Assume that we are given two lsc functions $f,\psi:\R^n\to\oR$ together with a point $\xb\in\dom f$ and a subgradient $\xba\in\partial f(\xb)$. Further assume that we are given a neighborhood $U\times V$ of $(\xb,\xba)$ together with some $\rho>f(\xb)$ such that $\psi\leq f$ on $U$ and
  \[\gph\partial_\rho f\cap(U\times V)=\gph \partial\psi\cap(U\times V)\quad \mbox{and $f(x)=\psi(x)$ at the common elements $(x,x^*)$}.\]
  Then $f$ is variationally $s$-convex at $\xb$ for $\xba$, provided $\psi$ has this property.
\end{lemma}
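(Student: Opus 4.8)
The plan is to transfer the variational $s$-convexity of $\psi$ directly to $f$ by exhibiting the same convex minorant $\widehat\psi$ as a witness for $f$. Since $\psi$ is variationally $s$-convex at $\xb$ for $\xba$, Definition \ref{DefVarConv} furnishes an lsc function $\widehat\psi$, an open convex neighborhood $U'\times V'$ of $(\xb,\xba)$, and a real $\rho'>\psi(\xb)$ such that $\widehat\psi-\frac s2\norm{\cdot}^2$ is convex on $U'$, $\widehat\psi\leq\psi$ on $U'$, and $\gphtr{\rho'}\partial\psi\cap(U'\times V')=\gph\partial\widehat\psi\cap(U'\times V')$ with $\psi=\widehat\psi$ at the common elements. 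The first step is to note that $\psi(\xb)=f(\xb)$: indeed $(\xb,\xba)\in\gphtr\rho\partial f\cap(U\times V)=\gph\partial\psi\cap(U\times V)$ is a common element, so $f(\xb)=\psi(\xb)$ by hypothesis.

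Next I would shrink all neighborhoods simultaneously. Set $\tilde U:=U\cap U'$, $\tilde V:=V\cap V'$, and choose $\tilde\rho\in(f(\xb),\min\{\rho,\rho'\}]$. The goal is to verify the three conditions of Definition \ref{DefVarConv} for $f$ with witness $\widehat\psi$ on $\tilde U\times\tilde V$ and level $\tilde\rho$. Convexity of $\widehat\psi-\frac s2\norm{\cdot}^2$ on $\tilde U\subset U'$ is immediate. For the minorant inequality $\widehat\psi\leq f$ on $\tilde U$, I would combine $\widehat\psi\leq\psi$ on $U'$ with $\psi\leq f$ on $U$, which holds precisely on $\tilde U=U\cap U'$.

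The key step is the graphical identity $\gphtr{\tilde\rho}\partial f\cap(\tilde U\times\tilde V)=\gph\partial\widehat\psi\cap(\tilde U\times\tilde V)$ together with equality of function values at common elements. Here I would chain the two given localization equalities. Because $\widehat\psi$ is convex up to the quadratic shift, it is prox-regular and subdifferentially continuous at $\xb$ for $\xba$, so by the argument in Remark \ref{RemVarConv}(2) I may further shrink $\tilde U\times\tilde V$ so that every $(x,x^*)\in\gph\partial\widehat\psi\cap(\tilde U\times\tilde V)$ satisfies $\widehat\psi(x)=\psi(x)<\tilde\rho$; on this smaller neighborhood $\gph\partial\widehat\psi\cap(\tilde U\times\tilde V)=\gphtr{\tilde\rho}\partial\psi\cap(\tilde U\times\tilde V)$. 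The remaining task is to identify the truncated graphs of $\partial\psi$ and $\partial f$: for any common element $(x,x^*)$ one has $\psi(x)=f(x)$, so the constraints $\psi(x)<\tilde\rho$ and $f(x)<\tilde\rho$ coincide, and the set equality $\gphtr{\tilde\rho}\partial f\cap(\tilde U\times\tilde V)=\gphtr{\tilde\rho}\partial\psi\cap(\tilde U\times\tilde V)$ follows from the hypothesis $\gphtr\rho\partial f\cap(U\times V)=\gph\partial\psi\cap(U\times V)$ intersected with $L_f(\tilde\rho)\times\R^n$.

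The main obstacle is precisely this bookkeeping of truncation levels: the two hypotheses attach their level constraints to $f$ and $\psi$ respectively, and one must be careful that a subgradient lies below $\tilde\rho$ with respect to the \emph{correct} function. The safeguard is the equality of values $f(x)=\psi(x)=\widehat\psi(x)$ at every common element, which makes the three truncation conditions interchangeable on the common part of the graphs; the subdifferential continuity of $\widehat\psi$ then guarantees that shrinking the neighborhood does not destroy any of the identified pairs. Assembling these verifications shows $f$ is variationally $s$-convex at $\xb$ for $\xba$ with witness $\widehat\psi$, completing the proof.
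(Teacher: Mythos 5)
Your proposal is correct and follows essentially the same route as the paper: take the witness $\widehat\psi$ for the variational $s$-convexity of $\psi$, use Remark \ref{RemVarConv}(ii) (relying on prox-regularity and subdifferential continuity of $\widehat\psi$) to localize at a common level $\tilde\rho\leq\min\{\rho,\rho'\}$, and then chain the two graph identities, using $\psi\leq f$ together with $f(x)=\psi(x)=\widehat\psi(x)$ at common elements to make the truncation conditions interchangeable. The only cosmetic difference is that the paper writes the final identification as an explicit chain of set equalities with level sets $L_\psi(\tilde\rho)$ and $L_f(\tilde\rho)$, whereas you argue it pointwise; both are sound.
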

\begin{proof}
  Assume that $\psi$ is variationally $s$-convex at $\xb$ for $\xba$ and consider an open convex neighborhood $\widehat U\times\widehat V$ of $(\xb,\xba)$  together with a  function $\widehat \psi\leq \psi$ on $\widehat U$   and some $\widehat \rho>0$ such that $\widehat\psi-\frac s2\norm{\cdot}^2$ is convex on $\widehat U$ and
  \[\gph\partial_{\widehat \rho} \psi\cap(\widehat U\times \widehat V)=\gph \partial\widehat \psi\cap(\widehat U\times \widehat V)\quad \mbox{and $\psi(x)=\widehat \psi(x)$ at the common elements $(x,x^*)$}.\]
  Taking $\tilde\rho:=\min\{\widehat\rho,\rho\}$, by Remark \ref{RemVarConv}(ii) we can find an open convex neighborhood $\tilde U\times \tilde V\subset \widehat U\times\widehat V$ of $(\xb,\xba)$ such that
  \[\gph\partial_{\tilde\rho} \psi\cap(\tilde U\times \tilde V)=\gph \partial\widehat\psi\cap(\tilde U\times  \tilde V)\quad \mbox{and $\psi(x)=\widehat \psi(x)$ at the common elements $(x,x^*)$}\]
  and, by  possibly shrinking $\tilde U\times \tilde V$, we may assume that $\tilde U\times \tilde V\subset U\times V$. Then for any $(x,x^*)\in\gph \partial \widehat \psi\cap(\tilde U\times \tilde V)$ we have $(x,x^*)\in \gph \partial_{\tilde\rho}\psi\cap (U\times V)$ implying $\widehat \psi(x)=\psi(x)=f(x)<\tilde\rho$ and $\gph \partial \widehat \psi\cap(\tilde U\times \tilde V)=\gph \partial \widehat \psi\cap(\tilde U\times \tilde V)\cap(L_f(\tilde\rho)\times\R^n)$. Since $\psi\leq f$ on $\tilde U\subset U$, we have $L_\psi(\tilde\rho)\cap \tilde U\supset L_f(\tilde\rho)\cap \tilde U$ and, together with $L_f(\rho)\supset L_f(\tilde\rho)$, we obtain that
  \begin{align*}\gph \partial \widehat \psi\cap(\tilde U\times \tilde V)&= \gph \partial\psi\cap (L_\psi(\tilde\rho)\times\R^n)\cap(\tilde U\times \tilde V)\cap (L_f(\tilde\rho)\times\R^n)\\
  &= \gph\partial f\cap (L_f(\rho)\times\R^n)\cap (\tilde U\times \tilde V)\cap (L_f(\tilde\rho)\times\R^n)=\gph\partial_{\tilde\rho} f\cap (\tilde U\times \tilde V)\end{align*}
  showing variational $s$-convexity of $f$ at $\xb$ for $\xba$.
\end{proof}
\begin{lemma}\label{LemQuadrShift}Let $s\in\R$ and assume that the lsc function $f:\R^n\to\oR$ is variationally $s$-convex at $\xb\in\dom f$ for $\xba\in\partial f(\xb)$. Then for every $t\in\R$ and every $y^*\in\R^n$ the function
\[\psi:=f+\skalp{y^*,\cdot-\xb}+\frac t2\norm{\cdot-\xb}^2\]
is variationally $(s+t)$-convex at $\xb$ for $\xba+y^*$. Moreover, if $\widehat f$, $U$, $V$ and $\rho$ are as in Definition \ref{DefVarConv}, then variational $(s+t)$-convexity of $\psi$ can be certified by the function $\widehat \psi:=\widehat f+\skalp{y^*,\cdot-\xb}+\frac t2\norm{\cdot-\xb}^2$ together with an suitable open convex neighborhood $\widehat U\times\widehat V$ of $(\xb,\xba+y^*)$ and some $\widehat\rho>\psi(\xb)=f(\xb)$.
\end{lemma}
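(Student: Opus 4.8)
The plan is to exhibit an explicit certificate for variational $(s+t)$-convexity of $\psi$ and to verify the three requirements of Definition \ref{DefVarConv} one at a time. Writing $g:=\skalp{y^*,\cdot-\xb}+\frac t2\norm{\cdot-\xb}^2$, so that $\psi=f+g$, the natural candidate certificate is $\widehat\psi:=\widehat f+g$. Note first that $g$ is twice continuously differentiable with $\nabla g(x)=y^*+t(x-\xb)$, $\nabla g(\xb)=y^*$, $\nabla^2 g\equiv tI$ and $g(\xb)=0$, so that $\psi(\xb)=f(\xb)=\widehat f(\xb)=\widehat\psi(\xb)$ and, by the sum rule for $C^2$-perturbations \cite[Exercise 8.8]{RoWe98}, $\partial\psi(x)=\partial f(x)+\nabla g(x)$ and $\partial\widehat\psi(x)=\partial\widehat f(x)+\nabla g(x)$ wherever the underlying function is finite; in particular $\xba+y^*\in\partial\psi(\xb)$.

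The convexity and domination requirements are immediate. Invoking Remark \ref{RemVarConv}(3) we may assume $\widehat f-\frac s2\norm{\cdot}^2$ is convex on all of $\R^n$. Since $g-\frac t2\norm{\cdot}^2$ is affine, the identity $\widehat\psi-\frac{s+t}2\norm{\cdot}^2=(\widehat f-\frac s2\norm{\cdot}^2)+(g-\frac t2\norm{\cdot}^2)$ exhibits the left-hand side as convex; consequently $\widehat\psi$ is prox-regular and subdifferentially continuous at $\xb$ for $\xba+y^*$. Moreover $\widehat\psi=\widehat f+g\le f+g=\psi$ on $U$.

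It remains to produce $\widehat U\times\widehat V$ and $\widehat\rho$ and to establish the graphical equality. The shift map $\Phi(x,x^*):=(x,x^*+\nabla g(x))$ is a homeomorphism carrying $(\xb,\xba)$ to $(\xb,\xba+y^*)$ and $\gph\partial f$, $\gph\partial\widehat f$ onto $\gph\partial\psi$, $\gph\partial\widehat\psi$, respectively. Using continuity of $\Phi^{-1}$ together with $\nabla g(\xb)=y^*$, I would choose an open convex neighborhood $\widehat U\times\widehat V$ of $(\xb,\xba+y^*)$ with $\widehat U\subset U$ such that $z^*-\nabla g(x)\in V$ for all $(x,z^*)\in\widehat U\times\widehat V$; then the correspondence $(x,z^*)\leftrightarrow(x,x^*)$ with $x^*:=z^*-\nabla g(x)$ maps $\widehat U\times\widehat V$ into $U\times V$, matches $\gph\partial\psi$ with $\gph\partial f$ and $\gph\partial\widehat\psi$ with $\gph\partial\widehat f$, and satisfies $\psi(x)=\widehat\psi(x)$ exactly when $f(x)=\widehat f(x)$.

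The main obstacle is the $f$-truncation: because $\psi$ and $f$ differ by the non-constant $g$, the level condition $f(x)<\rho$ does not transfer verbatim to $\psi$, so I would fix $\widehat\rho\in(f(\xb),\rho)$ and treat the two inclusions asymmetrically. For "$\subseteq$", after shrinking $\widehat U$ so that $|g(x)|<\rho-\widehat\rho$ there (possible since $g$ is continuous with $g(\xb)=0$), any $(x,z^*)\in\gph\partial\psi\cap(\widehat U\times\widehat V)$ with $\psi(x)<\widehat\rho$ yields $f(x)=\psi(x)-g(x)<\widehat\rho+(\rho-\widehat\rho)=\rho$, so the partner $(x,x^*)$ lies in $\gphtr\rho\partial f\cap(U\times V)=\gph\partial\widehat f\cap(U\times V)$, whence $(x,z^*)\in\gph\partial\widehat\psi$ with matching value. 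For "$\supseteq$", I would use the subdifferential continuity of $\widehat\psi$ established above: since $\widehat\psi(\xb)=f(\xb)<\widehat\rho$, shrinking $\widehat U\times\widehat V$ once more guarantees $\widehat\psi(x)<\widehat\rho$ for every $(x,z^*)\in\gph\partial\widehat\psi\cap(\widehat U\times\widehat V)$; the correspondence then places $(x,x^*)$ in $\gph\partial\widehat f\cap(U\times V)=\gphtr\rho\partial f\cap(U\times V)$, so $(x,z^*)\in\gph\partial\psi$ with $\psi(x)=\widehat\psi(x)<\widehat\rho$, i.e. $(x,z^*)\in\gphtr{\widehat\rho}\partial\psi$. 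Combining both inclusions gives the required equality with matching values, completing the verification of Definition \ref{DefVarConv} for the data $\psi$, $\widehat\psi$, $\widehat U\times\widehat V$, $\widehat\rho$.
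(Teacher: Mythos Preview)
Your argument follows the same route as the paper's proof: introduce $\widehat\psi=\widehat f+g$, verify the convexity and domination requirements directly, transfer graph elements via the correspondence $(x,z^*)\leftrightarrow(x,z^*-\nabla g(x))$, and handle the mismatch between the $\psi$- and $f$-truncations by bounding $g$ on a shrunken neighborhood together with subdifferential continuity of $\widehat\psi$. The constants are organized slightly differently (the paper sets $\widehat\rho=f(\xb)+\epsilon$ with $\epsilon=(\rho-f(\xb))/2$ and uses only the one-sided bound $g>-\epsilon$), but the substance is identical.

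One point needs correction: your appeal to Remark~\ref{RemVarConv}(3) is both unnecessary and problematic. It is unnecessary because Definition~\ref{DefVarConv} already gives convexity of $\widehat f-\frac s2\norm{\cdot}^2$ on $U$, and since $g-\frac t2\norm{\cdot}^2$ is affine you immediately get convexity of $\widehat\psi-\frac{s+t}2\norm{\cdot}^2$ on $U\supset\widehat U$; this is all the definition requires and all you actually use (prox-regularity and subdifferential continuity of $\widehat\psi$ at $\xb$ for $\xba+y^*$ follow from this local statement). It is problematic for two reasons. First, the general-$s$ case of Remark~\ref{RemVarConv}(3) is, in the paper's logical order, deduced \emph{from} the present lemma, so invoking it here is circular. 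Second, the ``Moreover'' clause asserts that \emph{any} certificate $\widehat f,U,V,\rho$ as in Definition~\ref{DefVarConv} yields a valid certificate $\widehat\psi=\widehat f+g$; replacing $\widehat f$ by the special globally nice function supplied by Remark~\ref{RemVarConv}(3) would only establish the claim for that particular $\widehat f$. Simply delete that sentence and work with convexity on $U$; the rest of your proof goes through verbatim.
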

\begin{proof}
  Let $y^*\in\R^n$ and $t\in\R$ be arbitrarily fixed and note that $\partial\psi(x)=\partial f(x)+y^*+t(x-\xb)$. Next we choose some sufficiently small neighborhood $\widehat U\times\widehat V$ of $(\xb,\xba+y^*)$ such that $\widehat U\subset U$, $\widehat V-y^*-t(U-\xb)\subset V$ and $\skalp{y^*,x-\xb}+\frac t2\norm{x-\xb}^2 >-\epsilon$, $x\in\widehat U$, where $\epsilon:=(\rho-f(\xb))/2$. Since $\widehat f\leq f$ on $U$, it follows that $\widehat\psi\leq\psi$ on $U\supset\widehat U$. Further, since $\widehat f-\frac s2\norm{\cdot}^2$ is convex on $U$, $\widehat \psi-\frac{s+t}2\norm{\cdot}^2$ is also convex there and we can conclude that $\widehat\psi$ is  subdifferentially continuous at $\xb$ for $\xba+y^*$. Hence, by possibly shrinking $\widehat U\times\widehat V$, we may assume that $\widehat\psi(x)<\widehat\rho:=f(\xb)+\epsilon$ whenever $(x,x^*)\in \widehat U\times\widehat V$.
  Then for every $(x,x^*)\in \gph \partial\widehat\psi\cap(\widehat U\times\widehat V)$ we have $(x,x^*-y^*-t(x-\xb))\in\gph \partial\widehat f\cap(U\times V)$ and therefore
  $(x,x^*-y^*-t(x-\xb))\in \gph \partial f$ and $f(x)=\widehat f(x)$. Hence $(x,x^*)\in\gph \partial\psi(x)\cap(\widehat U\times\widehat V)$ and $\psi(x)=\widehat \psi(x)<\widehat\rho$.
  On the other hand, if $(x,x^*)\in \gph \partial_{\widehat \rho}\psi(x)\cap(\widehat U\times\widehat V)$ we obtain that
  \[f(x)=\psi(x)-\skalp{y^*,x-\xb}-\frac t2\norm{x-\xb}^2<\psi(x)+\epsilon<\hat\rho+\epsilon=\rho\]
   and $(x,x^*-y^*-t(x-x^*))\in\gph\partial f\cap(U\times V)$. Hence, $(x,x^*-y^*-t(x-\xb))\in\gph\partial\widehat f\cap (U\times V)$ implying $(x,x^*)\in\gph \partial\widehat\psi\cap(\widehat U\times\widehat V)$. Thus we have shown
   \[\gph\partial_{\widehat \rho}\psi\cap(\widehat U\times\widehat V)= \gph\partial\widehat\psi\cap(\widehat U\times\widehat V)\quad\mbox{and $\psi(x)=\widehat \psi(x)$ at the common elements $(x,x^*)$}\]
   and the proof is complete.
\end{proof}
Proposition \ref{PropEpsLoc} is the basis of the following neighborhood-based characterizations of variational convexity.
\begin{theorem}\label{ThCharVarConv}
  Let $f:\R^n\to\oR$ be an lsc function and let $\xb\in\dom f$, $\xba\in\partial f(\xb)$ and some real $s$ be given. Then the following statements are equivalent:
  \begin{enumerate}
    \item[(i)] $f$ is variationally $s$-convex at $\xb$ for $\xba$.
    \item[(ii)] $f$ is prox-regular at $\xb$ for $\xba$ and there exists a neighborhood $U\times V$ of $(\xb,\xba)$  along with some $\rho>f(\xb)$ such that
        \begin{equation}\label{EqCharVarConv1}
          \skalp{z^*,z}\geq s\norm{z}^2\quad\mbox{ whenever }\quad z^*\in \widehat D_f^*(\partial f)(x,x^*)(z),\ (x,x^*)\in\gph \partial_\rho f\cap(U\times V).
        \end{equation}
    \item[(iii)] $f$ is prox-regular at $\xb$ for $\xba$ and there exists a neighborhood $U\times V$ of $(\xb,\xba)$ along with some $\rho>f(\xb)$ such that
        \begin{equation}\label{EqCharVarConv2}
          \skalp{z^*,z}\geq s\norm{z}^2\quad\mbox{ whenever }\quad z^*\in D_f^*(\partial f)(x,x^*)(z),\ (x,x^*)\in\gph \partial_\rho f\cap(U\times V).
        \end{equation}
    \item[(iv)] $f$ is prox-regular at $\xb$ for $\xba$ and there exists a neighborhood $U\times V$ of $(\xb,\xba)$ along with some $\rho>f(\xb)$ such that
        \begin{equation}\label{EqCharVarConv3}
          \skalp{z^*,z}\geq s\norm{z}^2\quad\mbox{ whenever }\quad (z,z^*)\in T^f_{\gph\partial f}(x,x^*),\ (x,x^*)\in\OO^f_{\partial f}\cap(U\times V),\ f(x)<\rho.
        \end{equation}
    \item[(v)] $f$ is prox-regular at $\xb$ for $\xba$ and there exists a neighborhood $U\times V$ of $(\xb,\xba)$ along with some $\rho>f(\xb)$ such that
        \begin{equation}\label{EqCharVarConv4}
          \skalp{z^*,z}\geq s\norm{z}^2\quad\mbox{ whenever }\quad (z,z^*)\in L\in \Sp_f(\partial f)(x,x^*),\ (x,x^*)\in\gph \partial_\rho f\cap(U\times V).
          \end{equation}
\if{     \item[(vi)] $f$ is prox-regular at $\xb$ for $\xba$ and there exists a neighborhood $U\times V$ of $(\xb,\yb)$ along with some $\rho>f(\xb)$ such that
    \begin{align*} \rge(P,W)= T^f_{\gph\partial f}(x,x^*)\ &\Rightarrow\ \mbox{$PWP-sP$ is pos. semidefinite}\end{align*}
     whenever $(x,x^*)\in \OO^f_{\partial f} f\cap(U\cap V)$ and $f(x)<\rho$.
     }\fi
    \item[(vi)] $f$ is prox-regular at $\xb$ for $\xba$ and there exists a neighborhood $U\times V$ of $(\xb,\xba)$ along with some $\rho>f(\xb)$ such that
    \begin{align*}  (P,W)\in\M^f_{P,W}\partial f(x,x^*)\ \Rightarrow\ \mbox{$PWP-sP$ is pos. semidefinite}\end{align*}
    whenever $(x,x^*)\in \gph \partial_\rho f\cap(U\cap V)$.
  \end{enumerate}
\end{theorem}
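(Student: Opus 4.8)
The plan is to assume prox-regularity throughout and reduce the entire equivalence to the subdifferentially continuous case through a convex certificate. Prox-regularity is automatic in (i) by Remark \ref{RemVarConv}(i) and is hypothesized in (ii)--(vi); so fix parameters $r\geq0$, $\epsilon>0$ and put $T:=\Ta$. By Remark \ref{RemVarConv}(i), $f$ is variationally $(-r)$-convex, which furnishes an lsc $\widehat f$ with $\widehat f\leq f$, $\widehat f+\frac r2\norm{\cdot}^2$ convex, $\gphtr\rho\partial f=\gph\partial\widehat f$ on a neighborhood and $f=\widehat f$ at common points; in particular $\widehat f$ is prox-regular \emph{and} subdifferentially continuous with $\partial\widehat f=T$ near $(\xb,\xba)$. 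By Proposition \ref{PropEpsLoc} every $f$-attentive object in (ii)--(vi) equals the ordinary object of $T$, hence the (ordinary $=$ $\widehat f$-attentive) object of $\partial\widehat f$; and by Theorem \ref{ThFundEquiv} statement (i) depends only on the shared localized graph $T$ and on $\xba\in\widehat\partial f(\xb)$, which holds for both functions by prox-regularity. Thus each of (i)--(vi) holds for $f$ iff it holds for $\widehat f$ (the direction $\widehat f\Rightarrow f$ for (i) being Lemma \ref{LemLowerFuncVarConv}), and it suffices to prove the theorem for the subdifferentially continuous $\widehat f$.

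I would then settle the structural implications. The inclusion $\gph\widehat D_f^*(\partial f)\subset\gph D_f^*(\partial f)$ yields (iii)$\Rightarrow$(ii), while (ii)$\Rightarrow$(iii) follows by expressing a limiting normal as a limit of regular normals and passing $\skalp{z_k^*,z_k}\geq s\norm{z_k}^2$ to the limit, after shrinking $U\times V,\rho$ so the approximating points remain admissible. For the SC objects, \eqref{EqInclAdjSubsp} together with $L=L^*$ from Proposition \ref{PropEpsLoc}(iii) places every $(z,z^*)\in L\in\Sp_f(\partial f)(x,x^*)$ inside $\gph D_f^*(\partial f)(x,x^*)$, giving (iii)$\Rightarrow$(v). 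Here (v)$\Leftrightarrow$(vi) is the $(P,W)$-algebra: for $L=\rge(P,W)$, $z=Pp$, $z^*=Wp$, relations \eqref{EqPW2} and $P^2=P$ give $\skalp{z^*,z}=p^\top PWP\,p$ and $\norm z^2=p^\top Pp$, so the inequality over all $p$ is exactly positive semidefiniteness of $PWP-sP$. Finally (iv)$\Leftrightarrow$(v) since at a point of $\OO^f_{\partial f}$ the tangent cone is itself a member of $\Sp_f(\partial f)$, while conversely each $L\in\Sp_f(\partial f)$ is a $d_\Z$-limit of such tangent cones, along which the closed inequality $\skalp{z^*,z}\geq s\norm z^2$ persists.

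To couple this block to (i), note that by Theorem \ref{ThFundEquiv} statement (i) is equivalent to $s$-monotonicity of $T$ around $(\xb,\xba)$. The easy direction (i)$\Rightarrow$(ii) uses that $T-sI=\partial(\widehat f-\frac s2\norm{\cdot}^2)$ is monotone, so its regular coderivative is positive semidefinite; unshifting gives $\skalp{z^*,z}\geq s\norm z^2$, i.e.\ \eqref{EqCharVarConv1}. The return from the second-order conditions to (i) proceeds by showing they force $T-sI$ to be monotone on a neighborhood, so that $\widehat f-\frac s2\norm{\cdot}^2$ is locally convex and $\widehat f$, hence $f$, is variationally $s$-convex; the interchangeability of the coderivative form (iii) and the SC form (v)/(vi) as the working hypothesis rests on the $(P,W)$-skeleton structure, exactly as in Theorem \ref{ThTiltProxRegSubdiffCont}.

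The principal obstacle is precisely this integration: upgrading the pointwise, infinitesimal positivity $\skalp{z^*,z}\geq s\norm z^2$ valid throughout a neighborhood to genuine $s$-monotonicity of $T$. This is where subdifferential continuity was previously indispensable, and the reduction to the convex certificate $\widehat f$ is what makes it tractable — for the subdifferentially continuous $\widehat f$ the passage from second-order positivity to local (strong) convexity is available either from the existing analysis \cite{KhMoPh23a} or, via local maximal monotonicity of $T+rI$ (Proposition \ref{PropLocMaxMon}), from classical convex second-order theory. A secondary, unavoidable nuisance is the neighborhood bookkeeping: conditions (i)--(vi) each carry their own $U,V,\rho$, and these must be intersected and shrunk consistently, respecting the truncation level $f(\xb)+\epsilon$ and Lemma \ref{LemBasLemma}, so that points admissible for one condition stay admissible for the next.
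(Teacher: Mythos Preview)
Your overall architecture—pass to the certificate $\widehat f$ coming from prox-regularity, identify the $f$-attentive objects with ordinary ones for $\partial\widehat f$ via Proposition~\ref{PropEpsLoc}, and invoke \cite{KhMoPh23a} on the subdifferentially continuous $\widehat f$—is essentially what the paper does for the block (i)$\Leftrightarrow$(ii)$\Leftrightarrow$(iii). The paper additionally normalizes to $s=0$ via Lemma~\ref{LemQuadrShift}, whereas you keep $s$ general; that is merely an organizational difference. Your arguments for (iii)$\Rightarrow$(v), (v)$\Leftrightarrow$(vi), and (iv)$\Leftrightarrow$(v) are also in line with the paper's.

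There is, however, a genuine gap: you never actually close the loop from the tangent/SC block (iv)/(v)/(vi) back to (i). Your chain gives only (i)$\Rightarrow$(ii)$\Leftrightarrow$(iii)$\Rightarrow$(v)$\Leftrightarrow$(iv)$\Leftrightarrow$(vi), plus (iii)$\Rightarrow$(i) from \cite{KhMoPh23a}; what is missing is (iv)$\Rightarrow$(i) (or any of (v),(vi)$\Rightarrow$(iii)). The references you cite at this point do not deliver it: \cite{KhMoPh23a} treats only the coderivative conditions, Proposition~\ref{PropLocMaxMon} merely identifies two notions of local maximal monotonicity, and Theorem~\ref{ThTiltProxRegSubdiffCont} is a \emph{point-based} equivalence for tilt stability which says nothing about upgrading a neighborhood SC-positivity hypothesis to the neighborhood coderivative hypothesis or to monotonicity. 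The inclusion $L^*\subset\gph D^*$ only goes one way, so ``interchangeability of (iii) and (v) as working hypothesis'' is not justified.

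The paper fills this gap by an independent direct proof of (iv)$\Leftrightarrow$(i) that bypasses the coderivative entirely: it applies \cite[Corollary~5.8]{PolRo96}, which characterizes local monotonicity of a mapping by monotonicity of its graphical derivative at every point of proto-differentiability. At points $(x,x^*)\in\OO_{\Ta}$ the localization $\Ta$ is proto-differentiable (via \cite[Proposition~13.46]{RoWe98} and \cite[Remark~3.18]{GfrOut22}), and since $\gph D\Ta(x,x^*)=T^f_{\gph\partial f}(x,x^*)$ is a subspace, condition \eqref{EqCharVarConv3} is exactly monotonicity of $D\Ta(x,x^*)$; hence $\Ta$ is locally monotone, and Theorem~\ref{ThFundEquiv} yields (i). This use of \cite[Corollary~5.8]{PolRo96} is the missing idea in your proposal.
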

\begin{proof}By Lemma \ref{LemQuadrShift} and Proposition \ref{PropCalc} together with \eqref{EqSumRule}, it suffices to prove the theorem for $s=0$.
Assume first that $f$ is variationally convex at $\xb$ for $\xba$ and consider the lsc function $\widehat f$ together with some open convex $\widehat U\times \widehat V$ of $(\xb,\xba)$ and some $\widehat\rho>f(\xb)$ such that
$\widehat f$ is convex on $\widehat U$ and
\begin{equation}\label{EqVarConvHat}
  \widehat f\leq f\mbox{ on }\widehat U,\ \gph\partial_{\widehat\rho} f\cap(\widehat U\times\widehat V)=\gph \partial \widehat f\cap(\widehat U\times\widehat V)\mbox{ and $f(x)=\widehat f(x)$ at the common elemnts $(x,x^*)$.}
\end{equation}
As pointed out in Remark \ref{RemVarConv}(i), variational convexity of $f$ implies prox-regularity with parameter $r=0$ and some $\epsilon>0$. By possibly shrinking $\epsilon$ we may assume that $\B_\epsilon(\xb)\times\B_\epsilon(\xba)\subset \widehat U\times\widehat V$ and $\rho:=f(\xb)+\epsilon\leq\widehat \rho$.
The function $\widehat f$ is convex on $\widehat U$ and consequently variationally convex, prox-regular and subdifferentially continuous at $\xb$ for $\xba$. Thus we may use the characterization \cite[Theorem 5.2]{KhMoPh23a} of variational convexity to obtain the existence of a neighborhood $U\times V$ of $(\xb,\xba)$ such that
\begin{equation}\label{EqKhMoPhCrit}\skalp{z^*,z}\geq 0\quad\mbox{whenever}\quad z^*\in\Phi(x,x^*)(z),\ (x,x^*)\in\gph\partial\widehat f\cap(U\times V),\ z\in\R^n,\end{equation}
where $\Phi(x,x^*)$ is either $\widehat D^*(\partial \widehat f)(x,x^*)$ or $D^*(\partial \widehat f)(x,x^*)$.
By possibly shrinking the neighborhood $U\times V$, we may assume that $U\times V\subset\B_\epsilon(\xb)\times \B_\epsilon(\xba)$ and $\widehat f(x)<\rho$ whenever $(x,x^*)\in \gph\partial\widehat f\cap (U\times V)$. Thus for every $(x,x^*)\in \gph\partial\widehat f\cap(U\times V)$ we have $(x,x^*)\in\gph \partial f$ and  $f(x)=\widehat f(x)<f(\xb)+\epsilon$ implying $\gph\partial\widehat f\cap(U\times V)\subset\gph\Ta\cap(U\times V)$. On the other hand, for every $(x,x^*)\in \gph\Ta\cap(U\times V)\subset \gph\partial_{\widehat \rho} f\cap(\widehat U\times\widehat V)$ we have $(x,x^*)\in \gph\partial\widehat f\cap(U\times V)$ and $\gph\partial\widehat f\cap(U\times V)=\gph\Ta\cap(U\times V)$ follows. Hence $\widehat D^*(\partial \widehat f)(x,x^*)=\widehat D^*\Ta(x,x^*)$ and  $D^*(\partial \widehat f)(x,x^*)=D^* \Ta(x,x^*)$, $(x,x^*)\in\gph \Ta\cap(U\times V)$ and the statements (ii) and (iii) follow from  \eqref{EqKhMoPhCrit} and Proposition \ref{PropEpsLoc}.

Next assume that (ii) or (iii) hold with $s=0$. Since $f$ is assumed to be prox-regular at $\xb$ for $\xba$ with some parameter values $r\geq 0$ and $\epsilon>0$, by taking into account Remark \ref{RemVarConv}(i), $f$ is also variationally $(-r)$-convex there and hence there exists an lsc function $\widehat f$  together with some open convex $\widehat U\times \widehat V$ of $(\xb,\xba)$ and some $\widehat\rho>f(\xb)$ such that
$\widehat f+\frac r2\norm{\cdot}^2$ is convex on $\widehat U$ and \eqref{EqVarConvHat} holds. By possibly shrinking the neighborhood $U\times V$ in (ii) or (iii) and lowering $\rho$ and $\epsilon$, by utilizing subdifferential continuity of $\widehat f$ at $\xb$ for $\xba$  we can assume  that $U \times V\subset\B_\epsilon(\xb)\times \B_\epsilon(\xba)\subset \widehat U\times\widehat V$ and $\widehat f(x)<\rho\leq f(\xb)+\epsilon\leq\widehat\rho$ whenever $(x,x^*)\in\gph\partial\widehat f\cap(U\times V)$. Using the same arguments as above, we conclude that $\gph\partial\widehat f\cap(U\times V)=\gph\Ta\cap(U\times V)=\gph\partial_\rho f\cap(U\times V)$ and $\widehat D^*(\partial \widehat f)(x,x^*)=\widehat D^*\Ta(x,x^*)=\widehat D_f^*(\partial f)(x,x^*)$ and  $D^*(\partial \widehat f)(x,x^*)=D^* \Ta(x,x^*)=D_f^*(\partial f)(x,x^*)$ for all $(x,x^*)\in \gph\partial_\rho f\cap(U\times V)$ follows. Hence \eqref{EqKhMoPhCrit} is valid and, since $\widehat f$ is prox-regular and subdifferentially continuous at $\xb$ for $\xba$, by \cite[Theorem 5.2]{KhMoPh23a} we obtain that $\widehat f$ is variationally convex at $\xb$ for $\xba$. Now (i) follows from Lemma \ref{LemLowerFuncVarConv}.

We proceed by showing the equivalence between (i) and (iv). In order to do this we will use the equivalence (i)$\Leftrightarrow$(iii) in Theorem \ref{ThFundEquiv}.
\if{Note that $\xba\in\widehat f(\xb)$ holds because $f$ is prox-regular at $\xb$ for $\xba$, either as  a consequence of variational convexity or by assumption.}\fi

If (i) holds then  $\partial f$ is $f$-locally monotone around $(\xb,\xba)$ and hence  $\Ta$ is  monotone around $(\xb,\xba)$ for all sufficiently small $\epsilon>0$  and  by \cite[Corollary 5.8]{PolRo96} this is equivalent with the existence of some neighborhood $U\times V$ of $(\xb,\xba)$ such that for all $(x,x^*)\in \OO_{\Ta}\cap(U\times V)$ the graphical derivative  $D\Ta(x,x^*)$ is a monotone mapping. In order to apply \cite[Corollary 5.8]{PolRo96} note that at points $(x,x^*)\in \OO_{\Ta}\cap(U\times V)$ the mapping $\Ta$ is in fact proto-differentiable because of  \cite[Proposition 13.46]{RoWe98} and \cite[Remark 3.18]{GfrOut22}.
From Proposition \ref{PropEpsLoc} we conclude that $\gph D\Ta(x,x^*)=T_{\gph \Ta}(x,x^*)=T_{\gph\partial f}^f(x,x^*)\in\Z_{nn}$  for every $(x,x^*)\in \OO_{\Ta}\cap(U\times V)$ and thus $(x,x^*)\in\OO^f_{\partial f}\cap((U\cap L_f(f(\xb)+\epsilon))\times V)$. Further, monotonicity of $D\Ta(x,x^*)$ implies $\skalp{z^*,z}\geq0$, $(z,z^*)\in\gph D\Ta(x,x^*)=T_{\gph\partial f}^f(x,x^*)$. This proves the implication (i)$\Rightarrow$(iv) with $\rho=f(\xb)+\epsilon$.

 Conversely, if (iv) holds then we may choose the parameter $\epsilon$ for prox-regularity so small  that $\B_\epsilon(\xb)\times\B_\epsilon(\xba)\subset U\times V$ and $f(\xb)+\epsilon<\rho$. Hence, $(x,x^*)\in \OO_{\Ta}\cap(U\times V)$ implies $(x,x^*)\in\OO^f_{\partial f}\cap((U\cap L_f(\rho))\times V)$ and $\gph D\Ta(x,x^*)=T_{\gph\partial f}^f(x,x^*)\in\Z_{nn}$ by virtue of Proposition \ref{PropEpsLoc}. Since $\gph D\Ta(x,x^*)$ is a subspace, it follows from \eqref{EqCharVarConv3} that $D\Ta(x,x^*)$ is monotone for these $(x,x^*)$ and consequently $\Ta$ is locally monotone at $(\xb,\xba)$ by  \cite[Corollary 5.8]{PolRo96} which implies that $\partial f$ has an $f$-attentive monotone localization around $(\xb,\xba)$ and consequently $f$ is variationally convex at $\xb$ for $\xba$.

Next we prove that (iv) is equivalent with (v). Obviously, (v) implies (iv). Now assume that (iv) holds with some open convex neighborhood $U\times V$ of $(\xb,\xba)$ and some $\rho>f(\xb)$ and consider at a point $(x,x^*)\in \gph\partial_\rho f\cap(U\times V)$ and a subspace $L\in \Sp_f\partial f(x,x^*)$ together with a sequence $(x_k,x_k^*)\subset \OO^f_{\partial f}$ satisfying $(x_k,x_k^*)\attconv{f}(x,x^*)$ and  $d_{\Z}(L,L_k)\to0$, where $L_k:=T^f_{\gph \partial f}(x_k,x_k^*)$. Denoting by $P$ and $P_k$ the orthogonal projections onto $L$ and $L_k$, respectively, we obtain that for every $(z,z^*)\in L$ the sequence $(z_k,z_k^*):=P_k(z,z^*)\in L_k$ converges to $P(z,z^*)=(z,z^*)$. For all $k$ sufficiently large we have $(x_k,x_k^*)\in\OO^f_{\partial f}\cap((U\cap L_f(\rho))\times V)$ and therefore $\skalp{z_k^*,z_k}\geq 0$. This implies $\skalp{z^*,z}=\lim_{k\to\infty}\skalp{z_k^*,z_k}\geq 0$ and the implication (iv)$\Rightarrow$(v) is established.

In order to show the equivalence between (v) and (vi), note that for $(x,x^*)\in\gph \partial_\rho f\cap(U\times V)$ and $L=\rge(P,W)\in\Sp_f(\partial f(x,x^*)$ with $(P,W)\in\M_{P,W}^f\partial f(x,x^*)$ the condition $\skalp{z^*,z}\geq 0$, $(z,z^*)\in L$ is equivalent with
 \[0\leq\skalp{Wp,Pp}=\skalp{p,WPp}=\skalp{p,PWPp},\ p\in\R^n\]
 and the equivalence between (v) and (vi) is established for $s=0$.
\end{proof}
\begin{remark}
  Very recently, Khanh, Khoa, Mordukhovich and Phat \cite[Theorem 5.5]{KhKhMoPh23c} gave a neigh\-bor\-hood-based characterization of variational convexity in terms of  regular coderivatives as well as limiting coderivatives of the $f$-attentive $\epsilon$-localization of $\partial f$ around $(\xb,\xba)$. By taking into account Proposition \ref{PropEpsLoc} and Lemma \ref{LemQuadrShift}, the equivalences (i)$\Leftrightarrow$(ii)$\Leftrightarrow$(iii) in Theorem \ref{ThCharVarConv} can also be derived from this result.
\end{remark}
\begin{remark}
  Consider $(P,W)\in \M^f_{P,W}\partial f(x,x^*)$. Taking into account \eqref{EqPW3}, it is easy to see that the matrix $PWP$ is positive semidefinite if and only if $W$ has this property.
\end{remark}

\section{Point-based second-order characterizations of variational strong convexity and tilt-stability}

We proceed by establishing equivalence between variational strong convexity and tilt-stability, present point-based second-order characterizations of these properties  and compute the exact bound for variational convexity and tilt-stability, respectively.

\begin{theorem}\label{ThCharStrVarConv}
  Let $f:\R^n\to\oR$ be an lsc function and let $\xb\in\dom f$ and $\xba\in\partial f(\xb)$ together with the real number $\sigma>0$ be given. Then the following statements are equivalent:
  \begin{enumerate}
    \item[(i)]  For every $\sigma'\in(0,\sigma)$, $f$ is  variationally strongly convex  at $\xb$ for $\xba$ with modulus $\sigma'$.
    \item[(ii)] $f$ is prox-regular at $\xb$ for $\xba$ and  for every $\sigma'\in(0,\sigma)$, $\xb$ is a tilt-stable local minimizer for $f-\skalp{\xba,\cdot}$ with modulus ${\sigma'}^{-1}$.
    \item[(iii)] $f$ is prox-regular at $\xb$ for $\xba$ and
        \begin{equation}\label{EqCharStrVarConv1}
          \skalp{z^*,z}\geq\sigma\norm{z}^2\quad\mbox{ whenever }\quad z^*\in D_f^*(\partial f)(\xb,\xba)(z),\ z\in\R^n.
        \end{equation}
        \item[(iv)] $f$ is prox-regular at $\xb$ for $\xba$ and
        \begin{equation}\label{EqCharStrVarConv2} \skalp{p,PWPp}\geq \sigma\norm{Pp}^2\quad\mbox{ whenever }\quad (P,W)\in\M^f_{P,W}\partial f(\xb,\xba),\ p\in\R^n.
        \end{equation}
    \item[(v)] $f$ is prox-regular at $\xb$ for $\xba$ and for every pair $(P,W)\in\M^f_{P,W}\partial f(\xb,\xba)$ the matrix $W-\sigma P$ is positive semidefinite.
  \end{enumerate}
  Finally, if $f$ is variationally strongly convex at $\xb$ for $\xba$, we have the following formulas for the exact bound of  variational convexity and tilt-stability:
  \begin{align}
    \label{EqTilt1}\varco(f;\xb|\xba)^{-1}&=\tilt(f-\skalp{\xba,\cdot};\xb)=\sup\left\{\frac{\norm{z}^2}{\skalp{z^*,z}}\mv z^*\in D^*_f(\partial f)(\xb,\xba)(z),\ z\in\R^n\right\}\\
    \label{EqTilt2}&=\sup\left\{\frac{\norm{Pp}^2}{\skalp{p,PWPp}}\mv (P,W)\in M_{P,W}^f\partial f(\xb,\xba), p\in\R^n\right\}\\
    \label{EqTilt3}&=\sup\{\norm{PW^{-1}}\mv (P,W)\in M_{P,W}^f\partial f(\xb,\xba)\}
  \end{align}
  with the convention $\frac 00:=0$.
\end{theorem}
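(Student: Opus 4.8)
The plan is to reduce the entire statement to the subdifferentially continuous case by passing to a convex certifier and then to read everything off from the known tilt-stability theory (Theorems \ref{ThTiltProxRegSubdiffCont} and \ref{ThEquivTilt_VSC}) transported through Proposition \ref{PropEpsLoc}. First I would normalize: applying Lemma \ref{LemQuadrShift} with the linear shift $g=-\skalp{\xba,\cdot}$ (so $\nabla^2 g=0$), together with Proposition \ref{PropCalc} and \eqref{EqSumRule}, I may assume $\xba=0$; this leaves $\varco(f;\xb\mid\xba)$, $\tilt(f-\skalp{\xba,\cdot};\xb)$, $D^*_f(\partial f)(\xb,\xba)$ and $\M^f_{P,W}\partial f(\xb,\xba)$ unchanged, since the transformation matrix $\bar A=I$. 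In each of (i)--(v) the function $f$ is prox-regular at $\xb$ for $0$ (in (i) by Remark \ref{RemVarConv}(i)), so Remark \ref{RemVarConv}(i) furnishes a certifier $\widehat f$ that is prox-regular \emph{and} subdifferentially continuous at $\xb$ for $0$, with $\widehat f\le f$, with $\widehat f=f$ on the common graph, and whose graph $\gph\partial\widehat f$ agrees near $(\xb,0)$ with $\gph\Ta$. Proposition \ref{PropEpsLoc} then gives the crucial identifications $D^*_f(\partial f)(\xb,0)=D^*(\partial\widehat f)(\xb,0)$ and $\M^f_{P,W}\partial f(\xb,0)=\M_{P,W}\partial\widehat f(\xb,0)$, so that (iii)--(v) are literally the corresponding conditions for the nice function $\widehat f$.

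Next I would settle the point-based equivalences. The step (iv)$\Leftrightarrow$(v) is linear algebra: for $(P,W)\in\M_{P,W}\partial\widehat f(\xb,0)$, \eqref{EqPW3} gives $W-\sigma P=(PWP-\sigma P)+(I-P)$ with the summands supported on the orthogonal subspaces $\rge P$ and $\ker P$, and since $I-P$ is the identity on $\ker P$, positive semidefiniteness of $W-\sigma P$ is equivalent to that of $PWP-\sigma P$, which is exactly \eqref{EqCharStrVarConv2}. For (iii)$\Leftrightarrow$(iv) I would invoke Theorem \ref{ThTiltProxRegSubdiffCont} for $\widehat f$: the implication (iii)$\Rightarrow$(iv) is immediate from $L^*\subset\gph D^*(\partial\widehat f)(\xb,0)$ in \eqref{EqInclAdjSubsp} and the $(P,W)$-representation, while for the converse a one-line computation in the block form $W=\mathrm{diag}(A,I)$, $A=PWP|_{\rge P}$, shows $\sup_p\norm{Pp}^2/\skalp{p,PWPp}=\norm{A^{-1}}=\norm{PW^{-1}}$, so that together with the coincidence of the two suprema in Theorem \ref{ThTiltProxRegSubdiffCont} condition (iv) turns into the coderivative bound (iii). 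Thus (iii),(iv),(v) are equivalent and, again by Theorem \ref{ThTiltProxRegSubdiffCont}, each is equivalent to $\tilt(\widehat f,\xb)\le\sigma^{-1}$; by Theorem \ref{ThEquivTilt_VSC} this means $\widehat f$ is variationally strongly convex at $\xb$ for $0$ with every modulus $\sigma'\in(0,\sigma)$, and since $f$ and $\widehat f$ share the localized subgradient graph and values this transfers to $f$ via Lemma \ref{LemLowerFuncVarConv} (and its converse through Theorem \ref{ThFundEquiv}(iii)), giving (i). Hence (i)$\Leftrightarrow$(iii)$\Leftrightarrow$(iv)$\Leftrightarrow$(v).

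For (i)$\Leftrightarrow$(ii) I would argue with tilt-minimizers. If (i) holds, fix $\sigma'\in(0,\sigma)$ and let $\widehat f$ certify variational $\sigma'$-convexity; by Theorem \ref{ThEquivTilt_VSC}, $\xb$ is a tilt-stable minimizer of $\widehat f$ with modulus ${\sigma'}^{-1}$, so $M_\gamma^{\widehat f}$ is single-valued near $0$ with $M_\gamma^{\widehat f}(0)=\xb$. Choosing $\gamma<\epsilon$ with $\oB_\gamma(\xb)$ inside the certifying neighborhood, the inequalities $\widehat f\le f$ and $\widehat f=f$ on the graph show that $M_\gamma^{\widehat f}(x^*)$ also minimizes $f-\skalp{x^*,\cdot}$ over $\oB_\gamma(\xb)$, while any minimizer of the latter lands in the graph window (because $f(x)\le f(\xb)+\norm{x^*}\gamma<f(\xb)+\epsilon$) and hence minimizes $\widehat f-\skalp{x^*,\cdot}$ as well; uniqueness for $\widehat f$ forces $M_\gamma^f=M_\gamma^{\widehat f}$, which is (ii). The converse (ii)$\Rightarrow$(i) is the main obstacle, precisely because subdifferential continuity is absent: one cannot simply transfer tilt-stability of $f$ to the certifier $\widehat f$, since $\widehat f$ need not be minimized near $\xb$. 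Instead I would argue on $f$ directly: the concavity of the tilted value function and the Lipschitz-${\sigma'}^{-1}$ single-valuedness of $M_\gamma$ yield the uniform quadratic growth $f(x)\ge f(x_v)+\skalp{v,x-x_v}+\tfrac{\sigma'}{2}\norm{x-x_v}^2$ around each tilt-minimizer $x_v=M_\gamma(v)$, and prox-regularity identifies every pair $(x,x^*)\in\gphtr\rho\partial f\cap(U\times V)$ with such a pair $(M_\gamma(x^*),x^*)$, turning this into the $\sigma'$-level uniform quadratic growth property of Definition \ref{DefQuadrGr}; Theorem \ref{ThFundEquiv}(ii)$\Rightarrow$(i) then delivers variational $\sigma'$-convexity for every $\sigma'\in(0,\sigma)$. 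This growth-from-tilt argument (the Poliquin--Rockafellar/Drusvyatskiy--Lewis mechanism, now run through the locally closed graph of $\Ta$ supplied by Lemma \ref{LemBasLemma}) is the technical heart of the theorem.

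Finally, for the exact-bound formulas I would assume $f$ is variationally strongly convex, so that $\widehat f$ is tilt-stable and $M_\gamma^{\widehat f}=M_\gamma^f$ by the coincidence above, whence $\tilt(f-\skalp{\xba,\cdot};\xb)=\tilt(\widehat f,\xb)$. Theorem \ref{ThEquivTilt_VSC} gives $\varco(\widehat f;\xb\mid0)=\tilt(\widehat f,\xb)^{-1}$, and since $f$ and $\widehat f$ have identical variational convexity one has $\varco(f;\xb\mid\xba)=\varco(\widehat f;\xb\mid0)$, establishing the first equality in \eqref{EqTilt1}. The remaining equalities in \eqref{EqTilt1}--\eqref{EqTilt3} are then the two tilt-bound formulas of Theorem \ref{ThTiltProxRegSubdiffCont} for $\widehat f$, rewritten through the identifications $D^*(\partial\widehat f)(\xb,0)=D^*_f(\partial f)(\xb,0)$ and $\M_{P,W}\partial\widehat f(\xb,0)=\M^f_{P,W}\partial f(\xb,0)$ of Proposition \ref{PropEpsLoc}, with \eqref{EqTilt2} matched to \eqref{EqTilt3} by the per-$(P,W)$ identity $\sup_p\norm{Pp}^2/\skalp{p,PWPp}=\norm{PW^{-1}}$ noted above.
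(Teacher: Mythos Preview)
Your overall architecture---normalize to $\xba=0$, produce a prox-regular and subdifferentially continuous certifier $\widehat f$ from Remark \ref{RemVarConv}(i), identify $D_f^*(\partial f)(\xb,0)$ and $\M^f_{P,W}\partial f(\xb,0)$ with the corresponding objects for $\widehat f$ via Proposition \ref{PropEpsLoc}, and then read off (iii)--(v), (i) and the exact-bound formulas from Theorems \ref{ThTiltProxRegSubdiffCont} and \ref{ThEquivTilt_VSC}---is essentially the paper's strategy, and your treatment of (iv)$\Leftrightarrow$(v), (iii)$\Rightarrow$(iv), and of (i)$\Rightarrow$(ii) (via $M_\gamma^f=M_\gamma^{\widehat f}$, which the paper simply cites from \cite{Ro19}) is fine.

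The genuine gap is in your (ii)$\Rightarrow$(i). The growth estimate $f(x')\ge f(x_v)+\skalp{v,x'-x_v}+\tfrac{\sigma'}2\norm{x'-x_v}^2$ at tilt-minimizers $x_v=M_\gamma(v)$ is obtainable, but your claim that ``prox-regularity identifies every pair $(x,x^*)\in\gphtr\rho\partial f\cap(U\times V)$ with such a pair $(M_\gamma(x^*),x^*)$'' does not follow. Combining the prox-regularity inequality at $(x,x^*)$ with the quadratic growth at $y:=M_\gamma(x^*)$ only yields $0\ge\tfrac{\sigma'-r}2\norm{x-y}^2$, which is vacuous when $\sigma'\le r$; and transferring tilt-stability from $f$ to your certifier $\widehat f$ (so as to quote Theorem \ref{ThEquivTilt_VSC}) fails because from $\widehat f\le f$ you cannot conclude that an $f$-minimizer on $\oB_\gamma(\xb)$ minimizes $\widehat f$ there. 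In other words, neither the Drusvyatskiy--Lewis mechanism nor the $M_\gamma^f=M_\gamma^{\widehat f}$ argument closes the loop without subdifferential continuity.

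The paper closes this gap differently. In the proof of (ii)$\Rightarrow$(iii) it builds the \emph{biconjugate} $\psi=\big((f+\delta_{\oB_\gamma(\xb)})^*\big)^*$, which is lsc convex, satisfies $\psi\le f$ on $\oB_\gamma(\xb)$ with equality at tilt-minimizers, and inherits $\tilt(\psi,\xb)=\tilt(f,\xb)$. The key step you are missing is then a \emph{local maximal monotonicity} argument: $rI+\partial\psi$ is maximally monotone, hence locally maximally monotone at $(\xb,r\xb)$ by Proposition \ref{PropLocMaxMon}; $rI+\Ta$ is monotone by prox-regularity; and one checks the graph inclusion $\gph(rI+\partial\psi)\subset\gph(rI+\Ta)$ on a small window. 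Maximality forces equality of the graphs there, which simultaneously yields $x=M_\gamma(x^*)$ for every $(x,x^*)\in\gph\Ta$ nearby \emph{and} the identifications $D^*(\partial\psi)(\xb,0)=D_f^*(\partial f)(\xb,0)$ and $\M_{P,W}\partial\psi(\xb,0)=\M_{P,W}^f\partial f(\xb,0)$. This is the technical heart that should replace your prox-regularity identification; once it is in place, the rest of your plan (including the exact-bound formulas) goes through.
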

\begin{proof}By taking into account Proposition \ref{PropCalc} and Lemma \ref{LemLowerFuncVarConv}, in order to unburden notation, we may assume that $\xba=0$.

ad ''(i)$\Rightarrow$(ii)'':  The statement follows from \cite[Theorem 3]{Ro19} together with the fact that variational convexity implies prox-regularity, cf. Remark \ref{RemVarConv}(i).

ad ''(ii)$\Rightarrow$(iii)'':  Assume that $f$ is prox-regular at $\xb$ for $0$ with parameter values $\epsilon>0$ and $r\geq 0$ and assume that $\xb$ is a tilt-stable local minimizer for $f$ with modulus $\kappa=\sigma'^{-1}$ for some $\sigma'\in(0,\sigma)$ and let $\gamma$, $V$ and $M_\gamma$ be as in Definition \ref{DefTiltStab}.

  Define $g:=(f+\delta_{\oB_\gamma(\xb)})^*$, i.e.,
  \[g(x^*)=\sup\{\skalp{x^*,x}- f(x)\mv x\in \oB_\gamma(\xb)\},\ x^*\in\R^n.\]
  This function $g$ is proper, lsc, convex and finite. Since
  \begin{equation}\label{EqAuxArgMax}\{M_\gamma(x^*)\}=\argmax\{\skalp{x^*,x}- f(x)\mv x\in \oB_\gamma(\xb)\},\ x^*\in V,
  \end{equation}
  one can easily verify that $M_\gamma(x^*)\in\partial g(x^*)$, $x^*\in V$. Hence, for any $x^*,\tilde x^*\in V$ we obtain by the subgradient inequality that
  \begin{align*}\skalp{M_\gamma(x^*),\tilde x^*-x^*}&\leq g(\tilde x^*)-g(x^*)\leq \skalp{M_\gamma(\tilde x^*),\tilde x^*-x^*}\\
  &= \skalp{M_\gamma(x^*),\tilde x^*-x^*}+\skalp{M_\gamma(\tilde x^*)-M_\gamma(x^*),\tilde x^*-x^*}\leq
  \skalp{M_\gamma(x^*),\tilde x^*-x^*}+\kappa\norm{\tilde x^*-x^*}^2.\end{align*}
  implying that $g$ is continuously differentiable on $\inn V$ with $\nabla g(x^*)=M_\gamma(x^*)$.

  Next denote by $\psi$ the function conjugate to $g$, i.e., $\psi(x)=\sup_{x^*}\{\skalp{x,x^*}-g(x^*)\}$ and choose some positive $\epsilon'<\min\{\epsilon, \epsilon/\kappa, 1\}$ with $\B_{\epsilon'}(0)\subset V$. For every $x\in\dom  f\cap\oB_\gamma(\xb)$ and every $x^*\in \R^n$ we have $g(x^*)\geq \skalp{x^*,x}- f(x)$
  implying $\psi(x)\leq \sup_{x^*}\{\skalp{x,x^*}-(\skalp{x^*,x}- f(x))\}= f(x)$ and this inequality trivially holds when $x\not\in\dom f$.

  Consider $x^*\in\B_{\epsilon'}(0)$. Since $(x^*,M_\gamma(x^*))\in\gph\partial g$ we have $(M_\gamma(x^*),x^*)\in\gph\partial\psi$ by \cite[Theorem 23.5]{Ro70} and, since $\psi$ is an lsc convex function, $M_\gamma(x^*)\in \argmin\{\psi-\skalp{x^*,\cdot}\}$ follows. Actually $M_\gamma(x^*)$ is the unique minimizer of $\psi-\skalp{x^*,\cdot}$, since any minimizer $\tilde x$ of $\psi-\skalp{x^*,\cdot}$ satisfies $x^*\in\partial\psi(\tilde x)$ and therefore $\tilde x\in\partial g(x^*)=\{M_\gamma(x^*)\}$ by \cite[Theorem 23.5]{Ro70}. Thus we conclude that $\xb$ is a tilt-stable minimizer for $\psi$ with modulus $\kappa$ and $\tilt(\psi;\xb)=\tilt(f;\xb)$.

   Consider a point $(x,x^*)\in\gph \partial\psi\cap(\B_\gamma(\xb)\times\B_{\epsilon'}(0))$. Then $(x^*,x)\in \gph\partial g\cap(\B_{\epsilon'}(0)\times \B_\gamma(\xb))$ and $g(x^*)+\psi(x)=\skalp{x^*,x}$ by \cite[Theorem 23.5]{Ro70} implying $x=\nabla g(x^*)=M_\gamma(x^*)$ and $\psi(x)=\skalp{x^*,x}-(\skalp{x^*,M_\gamma(x^*)}- f(M_\gamma(x^*)))= f(x)$. Further, by \eqref{EqAuxArgMax} together with $x=M_\gamma(x^*)\in\B_\gamma(\xb)$ the first-order optimality condition
  \[0\in \partial  f(M_\gamma(x^*))-x^*=\partial f(x)-x^*\]
  follows.
  Our choice of $\epsilon'$  ensures that
  \[\norm{x-\xb}=\norm{M_\gamma(x^*)-M_\gamma(0)}\leq\kappa\norm{x^*}<\kappa\epsilon'<\epsilon\]
  and, since $f(x)-\skalp{x^*,x}= f(M_\gamma(x^*))-\skalp{x^*,M_\gamma(x^*)}\leq f(\xb)-\skalp{x^*,\xb}$, we obtain that
  \[ f(x)\leq  f(\xb)+\skalp{x^*,x-\xb}\leq  f(\xb)+\norm{x^*}\norm{x-\xb}<f(\xb)+\kappa{\epsilon'}^2<f(\xb)+\epsilon.\]
  Hence, $\gph \partial\psi\cap(\B_\gamma(\xb)\times \B_{\epsilon'}(0))$ is contained in the graph of the $f$-attentive $\epsilon$-localization $\T_\epsilon^ f$ of $\partial f$ around $(\xb,0)$. Next choose some open convex neighborhood $\tilde U\times\tilde V$ of $(\xb, r\xb)$ such that $\tilde U\subset\B_\gamma(\xb)$ and $\tilde V-r\tilde U\subset \B_{\epsilon'}(0)$. Then for every $(x,x^*)\in \gph(r I+\partial \psi)\cap(\tilde U\times\tilde V)$ we have $(x,x^*-r x)\in \gph \partial \psi\cap \B_\gamma(\xb)\times \B_{\epsilon'}(0)$ and $(x,x^*)\in \gph (r I+\Ta)$ follows.

  Since $\psi$ is a lsc convex function, $\partial\psi$ is a maximally monotone mapping and so is $r I + \partial\psi$ as well. Therefore, the mapping $S:\R^n\tto\R^n$ given by $\gph S=\gph( r  I+\partial \psi)\cap (\tilde U\times\tilde V)$ is locally maximal monotone around $(\xb,\rho\xb)$ by Proposition \ref{PropLocMaxMon}. By the proof of \cite[Theorem 13.36]{RoWe98}, the mapping $r I+ \Ta$ is monotone and, since $\gph S\subset\gph(r I+\Ta)$, there exist neighborhoods $\hat U$ of $\xb$ and $\hat V$ of $r\xb$ such that
  \begin{align*}\gph S\cap(\hat U\times\hat V)&=\gph(r I+\partial\psi)\cap\big((\tilde U\cap \hat U)\times (\tilde V\cap \hat V)\big)\\
  &=\gph(r I+\Ta)\cap (\hat U\times \hat V)=\gph(r I+\Ta)\cap \big((\tilde U\cap \hat U)\times (\tilde V\cap \hat V)\big).
  \end{align*}
  Next we choose some   neighborhoods $\bar U$ of $\xb$ and $\bar V$ of $0$ satisfying $\bar U\subset \tilde U\cap \hat U$ and $r\bar U+\bar V\subset \tilde V\cap\hat V$. It follows that $\gph \partial\psi\cap(\bar U\times\bar V)=\gph\Ta\cap(\bar U\cap\bar V)$ and thus we obtain
  $D^*(\partial\psi)(\xb,0)=D^*\Ta(\xb,0)=D^*_f(\partial f)(\xb,0)$, where the last equality follows from  Proposition \ref{PropEpsLoc}. The lsc convex function $\psi$ is prox-regular and subdifferential continuous at $\xb$ for $0$ and we can use Theorem \ref{ThTiltProxRegSubdiffCont} to obtain
  \[\kappa=\frac1{\sigma'}\geq \sup\left\{\frac{\norm{z}^2}{\skalp{z^*,z}}\bmv (z,z^*)\in \gph D^*(\partial\psi)(\xb,0) = \gph D^*_f(\partial f)(\xb,0)\right\}\mbox{with the convention $\frac 00:=0$.}\]
  Hence
  \[\skalp{z^*,z}\geq \sigma'\norm{z}^2\quad\mbox{whenever}\quad z^*\in D^*_f(\partial f)(\xb,0)(z).\]
  and, since we can choose $\sigma'$ arbitrarily close to $\sigma$, the bound \eqref{EqCharStrVarConv1} follows.

  For later use note that we also have $\Sp(\partial\psi)(\xb,0)=\Sp\Ta(\xb,0)=\Sp_f(\partial f)(\xb,0)$ and consequently
  \begin{equation}\label{EqAuxTilt1}\tilt(\psi;\xb)=\tilt(f;\xb)= \sup\{\norm{PW^{-1}}\mv (P,W)\in \M_{P,W}\partial\psi(\xb,0)=\M_{P,W}^f\partial f(\xb,0)\}\end{equation}
  by Theorem \ref{ThTiltProxRegSubdiffCont}.

  ad ''(iii)$\Rightarrow$(iv)'': Consider $(P,W)\in\M_{P,W}^f\partial f(\xb,0)$ and set $L:=\rge(P,W)$. By taking into account Proposition \ref{PropEpsLoc} we have
  $L=L^*\in \Sp^*\Ta(\xb,0)$ and $L^*\subset\gph D^*\Ta(\xb,0)= \gph D^*_f(\partial f)(\xb,0)$, where $\epsilon>0$ is the parameter from prox-regularity of $f$ at $\xb$ for $0$. Thus, for every $p\in\R^n$ we have $(Pp,Wp)\in \gph D^*_f(\partial f)(\xb,0)$ and from \eqref{EqCharStrVarConv1} we conclude
  \[\skalp{Wp,Pp}=\skalp{PWp,p}=\skalp{PWPp,p}\geq\sigma\norm{Pp}^2.\]

  ad ''(iv)$\Rightarrow$(v)'': By taking into account \eqref{EqPW3}, for every $(P,W)\in \M^f_{P,W}\partial f(\xb,0)$ we have
  \[\skalp{p,(W-\sigma P)p}=\skalp{p, PWPp+(I-P)p}-\sigma\norm{Pp}^2=(\skalp{p,PWPp}-\sigma\norm{Pp}^2)+\norm{(I-P)p}^2,\ p\in\R^n\]
  and the assertion follows.

  ad ''(v)$\Rightarrow$(i)'': Let $r\geq0$ and $\epsilon>0$ denote the parameter values for prox-regularity of $f$ at $\xb$ for $0$. By Remark \ref{RemVarConv}, $f$ is variationally $(-r)$-convex and let $\widehat f,U,V$ and $\rho$ be as Definition \ref{DefVarConv}. By possibly  lowering $\epsilon$ we may assume that $f(\xb)+\epsilon\leq\rho$ and then, by possibly shrinking $U\times V$ and taking into account subdifferential continuity of $\widehat f$ at $\xb$ for $\xba$, we may assume that  $U\times V\subset \B_\epsilon(\xb)\times\B_\epsilon(0)$ and $\widehat f(x)<f(\xb)+\epsilon$, $(x,x^*)\in\gph\partial\widehat f\cap (U\times V)$ implying $\gph \partial\widehat f\cap (U\times V)=\gph \partial_\rho f\cap(U\times V)=\gph \Ta \cap (U\times V)$. Hence $\Sp (\partial \widehat f)(\xb,0)=\Sp_f(\partial f)(\xb,0)$ by Proposition \ref{PropEpsLoc} implying $\M_{P,W}\partial\widehat f(\xb,0)=\M_{P,W}^f\partial f(\xb,0)$.

  Assume now that (v) holds, pick $\sigma'\in(0,\sigma)$ and consider the function $\tilde f:=\widehat f-\frac{\sigma'}2\norm{\cdot-\xb}^2$. Since $\widehat f$ is prox-regular and subdifferentially continuous at $\xb$ for $0$, $\tilde f$ has these properties as well. By \cite[Proposition 3.15]{GfrOut22}  we have
  \begin{align*}\Sp(\partial\tilde f)(\xb,0)&=\{\rge(P,W-\sigma'P)\mv (P,W)\in \M_{P,W}\partial\widehat f(\xb,0)\}\\
  &=\{\rge(P,W-\sigma'P)\mv (P,W)\in \M^f_{P,W}\partial f(\xb,0)\}\end{align*}
  implying $\M_{P,W}\partial\tilde f(\xb,0)=\{(P,W-\sigma'P)\mv (P,W)\in \M_{P,W}^f\partial f(\xb,0)\}$.
  Consider $(P,\tilde W)\in \M_{P,W}\partial\tilde f(\xb,0)$ and the corresponding $(P,W)\in \M^f_{P,W}\partial f(\xb,0)$ with $\tilde W=W-\sigma' P$. Then for every $p\in\R^n$ we obtain that
  \begin{align*}\skalp{p,\tilde Wp}&=\skalp{p,P\tilde WPp+(I-P)p}=\skalp{p,P(W-\sigma P)Pp}+(\sigma-\sigma')\norm{Pp}^2+\norm{(I-P)p}^2\\
  &\geq\min\{\sigma-\sigma',1\}\norm{p}^2
  \end{align*}
  verifying that $\tilde W$ is positive definite and therefore nonsingular. Further, from \eqref{EqPW2} we obtain that
  \[P\tilde W^{-1}=\tilde W^{-1}P=\tilde W^{-1}P\tilde WP\tilde W^{-1}=(P\tilde W^{-1})^T\tilde W(P\tilde W^{-1})\]
  is positive semidefinite and we conclude from Theorem \ref{ThTiltProxRegSubdiffCont} that  $\xb$ is a tilt-stable local minimizer for $\tilde f$. Hence, by Theorem \ref{ThEquivTilt_VSC}, $\tilde f$   is variationally strongly convex at $\xb$ for $0$ and therefore $\widehat f$ is variationally $\sigma'$-convex at $\xb$ for $0$ by Lemma \ref{LemQuadrShift}. Since $f$ has the same property by  Lemma \ref{LemLowerFuncVarConv}, the implication (v)$\Rightarrow$(i) is verified.

  Finally, the equality $\varco(f;\xb|\xba)^{-1}=\tilt(f-\skalp{\xba,\cdot};\xb)$ is a consequence of the equivalence (i)$\Leftrightarrow$(ii). Further the formulas \eqref{EqTilt1}, \eqref{EqTilt2} for the exact bound of tilt-stability follow from the equivalences (ii)$\Leftrightarrow$(iii)$\Leftrightarrow$(iv) and \eqref{EqTilt3} was already deduced in \eqref{EqAuxTilt1}
\end{proof}

We are now in the position to extend the equivalences of Theorem \ref{ThFundEquiv} for variationally strongly convex functions.
\begin{theorem}\label{ThStrMetrRegTiltStab}
  Let $f:\R^n\to\oR$ be an lsc function and let $\xb\in\dom f$ and $\xba\in\partial f(\xb)$ together with the real number $\sigma>0$ be given. Then the following statements are equivalent:
  \begin{enumerate}
        \item[(i)]  $f$ is  variationally strongly convex  at $\xb$ for $\xba$ with modulus $\sigma$.
    \item[(ii)] $f$ is prox-regular at $\xb$ for $\xba$ and  $\xb$ is a tilt-stable local minimizer for $f-\skalp{\xba,\cdot}$ with modulus $\sigma^{-1}$.
    \item[(iii)] $\xb$ is a local minimizer for $f-\skalp{\xba,\cdot}$ and there is some $\rho>f(\xb)$ such that the $f$-truncation $\partial_\rho f$ is strongly metrically regular at $\xb$ for $\xba$ with modulus $\sigma^{-1}$.
  \end{enumerate}
\end{theorem}
\begin{proof}In order to unburden notation, we may again assume that $\xba=0$. The implication (i)$\Rightarrow$(ii) follows from \cite[Theorem 3]{Ro19} together with the fact that variational convexity implies prox-regularity, cf. Remark \ref{RemVarConv}(i). In order to show the  reverse implication, consider the lsc convex function $\psi$ from the proof of the implication (ii)$\Rightarrow$(iii) in Theorem \ref{ThCharStrVarConv} with $\sigma'$ replaced by $\sigma$. Then $\psi$ is prox-regular and subdifferentially continuous, and since $\xb$ is a tilt-stable minimizer for $\psi$ with modulus $\sigma^{-1}$, $\psi$ is variationally strongly convex at $\xb$ for $\xba$ with modulus $\sigma$ by Theorem \ref{ThEquivTilt_VSC} and the same property follows for $f$ by Lemma \ref{LemLowerFuncVarConv}.

For showing the implication (ii)$\Rightarrow$(iii), we  again resort to  the proof of  the implication (ii)$\Rightarrow$(iii) in Theorem \ref{ThCharStrVarConv}. From tilt-stability of $\xb$ for $\psi$ with modulus $\sigma^{-1}$ it follows from Theorem \ref{ThTiltProxRegSubdiffCont} that $\psi$ is metrically strongly regular at $\xb$ with the same modulus. Since we have also shown that $\gph \partial \psi$ and $\gph \Ta$ coincide locally around $(\xb,0)$, $\Ta$ is also strongly metrically regular at $\xb$ for $0$ with modulus $\sigma^{-1}$. By definition, $\Ta$ is a localization of the $f$-attentive truncation $\partial_{f(\xb)+\epsilon} f$ and thus statement (iii) follows with $\rho=f(\xb)+\epsilon$.

Finally let us show the reverse direction (iii)$\Rightarrow$(ii). Assume that (iii) holds, let $f$ be prox-regular with parameters $r\geq 0$ and $\epsilon>0$ and consider a real $\rho>f(\xb)$, open neighborhoods $U$ of $\xb$ and $V$ of $0$ together with a mapping $\vartheta:V\to U$ which is Lipschitz continuous with constant $\sigma^{-1}$, such that $\gph (\partial_\rho f)^{-1}\cap (V\times U)=\gph \vartheta$. We claim that $\xb$ is a strict local minimizer of $f$, i.e., there exists some $\gamma>0$ such that $f(x)>f(\xb)$ holds for all $x\in\oB_\gamma(\xb)\setminus\{\xb\}$. Assuming on the contrary that this claim does not hold, we can find a sequence $x_k\to \xb$, $x_k\not=\xb$, satisfying $f(x_k)\leq f(\xb)$. Since $\xb$ is a local minimizer of $f$, we can conclude that $f(x_k)=f(\xb)$ for all $k$ sufficiently large and consequently $x_k$ is a local minimizer for $f$ as well. Hence, $0\in\partial f(x_k)=\partial_\rho f(x_k)$ contradicting our assumption that $(\partial_\rho f)^{-1}$ has a single-valued localization around $(0,\xb)$. Hence, our claim holds true and, since $f$ is assumed to be lsc, there is some positive real $\eta$ such that $f(x)\geq f(\xb)+\eta$ for all $x$ with $\norm{x-\xb}=\gamma$. Thus, whenever $\norm{x^*}<\bar\delta:=\min\{\eta,\rho-f(\xb)\}/\gamma$ we have
\[M_\gamma(x^*):=\argmin\{f(x)-\skalp{x^*,x}\mv x\in\oB_\gamma(\xb)\}=\argmin\{f(x)-\skalp{x^*,x-\xb}\mv x\in\oB_\gamma(\xb)\}\subset\B_\gamma(\xb)\]
implying $0\in\partial f(x)-x^*$, $x\in M_\gamma(x^*)$, and
\[f(x)-(\rho-f(\xb))<f(x)-\skalp{x^*,x-\xb}\leq f(\xb),\ x\in M_\gamma(x^*).\]
Therefore, for every $x^*\in \B_{\bar \delta}(0)$ and every $x\in M_\gamma(x^*)$ we have $x^*\in \partial_\rho f(x)$ and lower semicontinuity of $f$  ensures that $M_\gamma(x^*)$ is nonempty. By utilizing \cite[Theorem 1.17]{RoWe98}, we can find some open neighborhood $\tilde V\subset \B_{\bar\delta}(0)$ of $0$ such that $M_\gamma(x^*)\subset U$, $x^*\in\tilde V$. It follows that for every $x^*\in V\cap\tilde V$ and every $x\in M_\gamma(x^*)$ we have $(x,x^*)\in\gph \partial_\rho f\cap(U\times V)$ implying $x=\vartheta(x^*)$. Hence $M_\gamma$ coincides with $\vartheta$ on $V\cap\tilde V$ and is therefore  single-valued and Lipschitz continuous with constant $\sigma^{-1}$ on $V\cap\tilde V$ verifying that $\xb$ is a tilt-stable local minimizer for $f$ with modulus $\sigma^{-1}$. There remains to show that $f$ is prox-regular at $\xb$ for $0$. Choose $0<\epsilon<\min\{\gamma,\rho-f(\xb)\}$ such that $\B_\epsilon(\xb)\subset U$ and $\B_\epsilon(0)\subset \tilde V\cap V$. Then for every pair $(x,x^*)\in \gph\partial_{f(\xb)+\epsilon} f\cap(\B_\epsilon(\xb)\times\B_\epsilon(0))$ we have $x=\vartheta(x^*)=M_\gamma(x^*)$ implying that for every $x'\in\B_\epsilon(\xb)\subset \B_\gamma(\xb)$ there holds $f(x')-\skalp{x^*,x'}\geq f(x)-\skalp{x^*,x}$.  From this inequality we easily deduce that $f$ is prox-regular at $\xb$ for $0$ with parameters $r=0$ and the chosen $\epsilon$.
\end{proof}
\begin{remark}
  Though Theorems \ref{ThFundEquiv}, \ref{ThCharStrVarConv} and \ref{ThStrMetrRegTiltStab} provide a bunch of characterizations for tilt-stability and variational strong convexity for prox-regular functions, the list of equivalences can still be extended. E.g., by utilizing the relation (i)$\Leftrightarrow$(ii) from Theorem \ref{ThFundEquiv},  one can easily add to Theorem \ref{ThStrMetrRegTiltStab} the following statement generalizing Theorem 3.2 of Mordukhovich and Nghia \cite{MoNg15} (see also \cite[Theorem 3.3 (3.)]{DruLew13} in the paper of Drusvyatskiy and Lewis):
  \begin{enumerate}
    \item[(iv)]There are neighborhoods $U$ of $\xb$ and $V$ of $\xba$ together with a real $\rho>f(\xb)$ such that the mapping $(\partial_\rho f)^{-1}$ admits a single-valued localization $\vartheta : V \to U$ around $(\xba,\xb)$ and that for any pair $(x^*,x)\in \gph\vartheta=\gph (\partial_\rho f)^{-1}\cap(V\times U)$ we have the uniform second-order growth condition
\[f(x')\geq f(x) +\skalp{x^*,x'-x}+\frac\sigma2\norm{x'-x}^2\]
 whenever $x'\in U$.
  \end{enumerate}
  I am very grateful to one of the reviewers which asked for a comparison of Theorem 5.1 with \cite[Theorem 3.2]{MoNg15} of Mordukhovich and Nghia.. It motivated me to add statement (iii) to the equivalences in Theorem 5.2.
\end{remark}
The statements of Theorem \ref{ThCharStrVarConv}, with the exception of tilt-stability, are also valid for non-positive $\sigma$.
\begin{corollary}\label{CorCharVarConv}
  Let $f:\R^n\to\oR$ be an lsc function and let $\xb\in\dom f$ and $\xba\in\partial f(\xb)$ together with the real $s$ be given. Then the following statements are equivalent:
  \begin{enumerate}
    \item[(i)]  For every $s'\in(-\infty,s)$, $f$ is  variationally $s'$-convex  at $\xb$ for $\xba$.
    \item[(ii)] $f$ is prox-regular at $\xb$ for $\xba$ and
        \begin{equation*}
          \skalp{z^*,z}\geq s\norm{z}^2\quad\mbox{ whenever }\quad z^*\in D_f^*(\partial f)(\xb,\xba)(z),\ z\in\R^n.
        \end{equation*}
        \item[(iii)] $f$ is prox-regular at $\xb$ for $\xba$ and
        \begin{equation*} \skalp{p,PWPp}\geq s\norm{Pp}^2\quad\mbox{ whenever }\quad (P,W)\in\M^f_{P,W}\partial f(\xb,\xba),\ p\in\R^n.
        \end{equation*}
    \item[(iv)] $f$ is prox-regular at $\xb$ for $\xba$ and for every pair $(P,W)\in\M^f_{P,W}\partial f(\xb,\xba)$ the matrix $W-s P$ is positive semidefinite.
  \end{enumerate}
  Finally,  we have the following formulas for the exact bound of  variational convexity:
  \begin{align*}
    \varco(f;\xb|\xba)&=\inf\left\{\frac{\skalp{z^*,z}}{\norm{z}^2}\mv z^*\in D^*_f(\partial f)(\xb,\xba)(z),\ z\in\R^n\right\}\\
    &=\inf\left\{\frac{\skalp{p,PWPp}}{\norm{Pp}^2}\mv (P,W)\in M_{P,W}^f\partial f(\xb,\xba), p\in\R^n\right\}\\
  \end{align*}
  with the convention $\frac 00:=\infty$.
\end{corollary}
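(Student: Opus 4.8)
The plan is to reduce the assertion, which is stated for an arbitrary real $s$, to the already established Theorem \ref{ThCharStrVarConv} (valid for $\sigma>0$) by a quadratic shift that lifts $s$ into the positive range. Fix any $t>0$ with $s+t>0$, put $g:=\frac t2\norm{\cdot-\xb}^2$ and set $\psi:=f+g$. Since $g$ is twice continuously differentiable with $\nabla g(\xb)=0$ and $\nabla^2 g(\xb)=tI$, the function $\psi$ is prox-regular at $\xb$ for $\xba$ exactly when $f$ is, and Lemma \ref{LemQuadrShift} (applied with $y^*=0$, and once more in reverse to $\psi$) shows that $f$ is variationally $s'$-convex at $\xb$ for $\xba$ if and only if $\psi$ is variationally $(s'+t)$-convex there. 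In particular $\varco(\psi;\xb|\xba)=\varco(f;\xb|\xba)+t$, and the choice $s+t>0$ places $\psi$ under the scope of Theorem \ref{ThCharStrVarConv} with modulus $\sigma=s+t$.

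The core of the proof is then to verify that each condition of the corollary for the data $(f,s)$ is literally the corresponding condition of Theorem \ref{ThCharStrVarConv} for $(\psi,s+t)$. Condition (i), understood (as in Theorem \ref{ThCharStrVarConv}) as variational $s'$-convexity for every $s'$ below $s$, is the variational-convexity transfer just recorded, where the one-sided family $\{s'<s\}$ is matched with $\{\sigma'\in(0,s+t)\}$ through $\sigma'=s'+t$, using that variational $\sigma'$-convexity is monotone in $\sigma'$. For the coderivative condition (ii) I would appeal to Proposition \ref{PropCalc}: with $\bar A=\left(\begin{smallmatrix}I&0\\tI&I\end{smallmatrix}\right)$ one has $(z,z^*)\in\gph D_f^*(\partial f)(\xb,\xba)$ iff $(z,z^*+tz)\in\gph D_\psi^*(\partial\psi)(\xb,\xba)$, whereupon $\skalp{z^*+tz,z}\ge(s+t)\norm{z}^2$ is exactly $\skalp{z^*,z}\ge s\norm{z}^2$. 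For (iii) and (iv) I would invoke the sum rule \eqref{EqSumRule}, which gives $\M^{f+g}_{P,W}\partial(f+g)(\xb,\xba)=\{(P,W+tP)\mv(P,W)\in\M^f_{P,W}\partial f(\xb,\xba)\}$ because $P\nabla^2 g(\xb)P=tP$; as $P(W+tP)P=PWP+tP$ and $(W+tP)-(s+t)P=W-sP$, both the inequality $\skalp{p,PWPp}\ge s\norm{Pp}^2$ and the positive semidefiniteness of $W-sP$ carry over unchanged. Hence the equivalence (i)$\Leftrightarrow$(ii)$\Leftrightarrow$(iii)$\Leftrightarrow$(iv) for $(f,s)$ is inherited from the corresponding equivalences for $(\psi,s+t)$ supplied by Theorem \ref{ThCharStrVarConv}.

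For the exact bound I would argue directly, rather than through the reciprocal appearing in Theorem \ref{ThCharStrVarConv}. Reading the equivalence (i)$\Leftrightarrow$(ii) across all real $s$, variational $s'$-convexity of $f$ for every $s'<s$ is equivalent to $\skalp{z^*,z}\ge s\norm{z}^2$ holding on $\gph D_f^*(\partial f)(\xb,\xba)$, i.e.\ to $\inf\{\skalp{z^*,z}/\norm{z}^2\mv z^*\in D_f^*(\partial f)(\xb,\xba)(z)\}\ge s$ with the convention $\frac00:=\infty$; taking the supremum over all admissible $s$ identifies $\varco(f;\xb|\xba)$ with this infimum, and the identical argument applied to (iii) yields the $PWP$-formula.

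The step demanding the most care is the bookkeeping for condition (i) and the exact bound. Because the supremum defining $\varco$ need not be attained (cf.\ the example following Definition \ref{DefVarConv}), condition (i) must be read as variational $s'$-convexity for all $s'$ strictly below $s$, and this one-sided family has to be aligned — via monotonicity of the modulus — with the strong-convexity interval $(0,s+t)$ of Theorem \ref{ThCharStrVarConv}, all the while tracking that the shift sends the threshold $s=\varco(f;\xb|\xba)$ to $\sigma=s+t=\varco(\psi;\xb|\xba)$. Everything else is the routine confirmation that the shift matrix $\bar A$ and the sum rule \eqref{EqSumRule} act exactly as stated.
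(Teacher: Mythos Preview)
Your proposal is correct and follows essentially the same route as the paper: choose $t>0$ with $s+t>0$, shift to $\psi=f+\frac t2\norm{\cdot-\xb}^2$, and transfer each item via Lemma \ref{LemQuadrShift}, Proposition \ref{PropCalc} and the sum rule \eqref{EqSumRule}. Your direct derivation of the $\varco$ formulas from the equivalence (i)$\Leftrightarrow$(ii)/(iii) across all real $s$ is a clean variant of what the paper leaves implicit, and your remark that (i) must be read as ``all $s'<s$'' (with the monotonicity of the modulus bridging to the interval $(0,s+t)$) correctly handles the only subtle bookkeeping point.
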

\begin{proof}
  Choose $t>0$ such that $\sigma:=s+t>0$ and apply Theorem \ref{ThCharStrVarConv} to the function $\tilde f:=f+\frac t2\norm{\cdot-\xb}^2$. Since we can take $t$ arbitrarily large, the assertions follow from Proposition \ref{PropCalc} and Lemma \ref{LemQuadrShift}.
\end{proof}

\section*{Declarations}

{\bf Competing interests.} The author has no competing interests to declare that are relevant to the content
of this article.\\
{\bf Data availability. }Data sharing not applicable to this article as no datasets were generated or analysed
during the current study.

\end{document}